\newcommand{\tocless}[2]{\bgroup\let\addcontentsline=\nocontentsline#1{#2}\egroup}
\newtheorem{theo}{Theorem}[section]
\newtheorem{rem}[theo]{Remark}
\newtheorem{cor}[theo]{Corollary}
\newtheorem{lem}[theo]{Lemma}
\newtheorem{prop}[theo]{Proposition}
\newtheorem{conj}[theo]{Conjecture}
\theoremstyle{definition}
\newtheorem{eg}[theo]{Example}
\newcommand{\id}{Id}
\newcommand{\BF}[1]{\mathbf{#1}}
\title{Estimating and computing Kronecker Coefficients: a vector partition function approach}
\author{Marni Mishna and Stefan Trandafir}
\begin{document}
\maketitle
\begin{abstract}
    We study the Kronecker coefficients $g_{\lambda, \mu, \nu}$ via a formula that was described by Mishna, Rosas, and Sundaram, in which the coefficients are expressed as a signed sum of vector partition function evaluations. In particular, we use this formula to determine formulas to evaluate, bound, and estimate $g_{\lambda, \mu, \nu}$ in terms of the lengths of the partitions $\lambda, \mu$, and $\nu$. We describe a computational tool to compute Kronecker coefficients $g_{\lambda, \mu, \nu}$ with $\ell(\mu) \leq 2,\ \ell(\nu) \leq 4,\ \ell(\lambda) \leq 8$. We present a set of new vanishing conditions for the Kronecker coefficients by relating to the vanishing of the related atomic Kronecker coefficients, themselves given by a single vector partition function evaluation. We give a stable face of the Kronecker polyhedron for any positive integers $m,n$. Finally, we give upper bounds on both the atomic Kronecker coefficients and Kronecker coefficients.
\end{abstract}
\tableofcontents

\section{Introduction}

\subsection{Computational questions on Kronecker coefficients}
The \emph{Kronecker coefficients} $g_{\lambda, \mu, \nu}$ are the structure constants in the decomposition of a tensor product of irreducible representations of the symmetric group into irreducible representations:
\begin{equation}
    V_{\mu} \otimes V_{\nu} = \bigoplus\limits_{\lambda} g_{\lambda, \mu, \nu} V_{\lambda}.
\end{equation}
Consequently, they can be expressed using Schur functions
\begin{equation} \label{eq:schur}
    s_{\lambda}[XY] = \sum_{\mu, \nu} g_{\lambda, \mu, \nu} s_{\mu}[X]s_{\nu}[Y],
\end{equation}
where $X := (x_1, \dots, x_m), Y := (y_1, \dots, y_n), XY := (x_1y_1, x_1y_2, \dots, x_my_n)$. 
Here, the Schur functions are indexed by partitions $\lambda, \mu, \nu$ with at most $mn,m,n$ parts respectively. The number of parts of a partition $\alpha$ is its \emph{length} and is denoted here by $\ell(\alpha)$. When we focus on the set of partitions of length at most $k$, we may pad the partitions with $0$'s so that they are all sequences of length $k$. 

Since their introduction in 1938 by Murnaghan, the Kronecker coefficients have proved to be among the most intriguing objects in algebraic combinatorics. After several decades of research, many open questions about the Kronecker coefficients remain. They are all non-negative integers, but have no known combinatorial interpretation. The Littlewood-Richardson $c_{\mu, \nu}^{\lambda}$ coefficients describe the decomposition of a tensor product of irreducible representations of the \emph{general linear group} into irreducible representations. This difference could appear small, yet the Littlewood-Richardson $c_{\mu, \nu}^{\lambda}$ coefficients have several combinatorial interpretations. One might view Kronecker coefficients as a generalization of the Littlewood-Richardson coefficients, hence the resistance to a clear interpretation is surprising, particularly in view of the publicity they have received\footnote{``One of the main problems in the combinatorial representation theory of the symmetric group is to obtain a combinatorial interpretation for the Kronecker coefficients." - Stanley \cite{Stan99}}. 

The basic problem of computing the Kronecker coefficient $g_{\lambda, \mu, \nu}$ for general partitions $\lambda, \mu, \nu$ is $\#$P-hard (in the bitlength of the size of the partitions) and contained in GapP~\cite{BuIk08}\footnote{Problems in GapP can be expressed as the difference of two functions which are in $\#$P.}.  Baldoni, Vergne and Walter distribute code~\cite{BaVeWa17} for use with  Maple mathematical software to compute Kronecker coefficients for partitions $\lambda, \mu, \nu$ of bounded lengths ($\ell(\lambda), \ell(\mu), \ell(\nu) \leq 3$; and  $\ell(\lambda) \leq 6, \ell(\mu) \leq 2, \ell(\nu) \leq 3$). There are at least two packages that handle partition sets without a bound on length, such as~\cite{ChDoWa12} and the SF Maple package of Stembridge~\cite{Stem05}. These two packages are prohibitably computationally expensive except for some small, or particular cases.  

Even the problem of approximating the Kronecker coefficients is non-trivial and very few useful bounds are known.  Pak and Panova~\cite[Corollary 3.4]{PaPa14} determine a bound for partitions $\lambda, \mu, \nu$ with $\ell(\lambda) \leq l, \ell(\mu) \leq m, \ell(\nu) \leq n$:
\begin{equation} \label{eq:pp-bound1}
    g_{\lambda, \mu, \nu} \leq \prod_{i=1}^{l} \binom{\lambda_i - i + mn}{mn - i}.
\end{equation}
More recently, in~\cite{PaPa20}, they obtained the following bound in $N = |\lambda| = |\mu| = |\nu|$ via contingency tables
\begin{equation} \label{eq:pp-bound2}
    g_{\lambda, \mu, \nu} \leq \left(1 + \frac{lmn}{N}\right)^N\left(1 + \frac{N}{lmn}\right)^{lmn}.
\end{equation}
Remark both of these bounds are polynomial in the length of the partitions - however, the degree is generally not optimal. For example, when $l = 4, m=2, n=2$, we find that if  $\lambda = (\frac{N}{4}, \frac{N}{4}, \frac{N}{4}, \frac{N}{4})$, both bounds are of the order $O(N^{16})$ whereas the actual growth is (more precisely) $O(N^2)$~\cite{BrOrRo09}. 

    By $h(u)$, we denote the hook-length of the box~$u$ in the Ferrers diagram of $\lambda$. The hook length formula~\cite[Corollary 3.2]{PaPa20} also gives a bound:
\begin{equation}
    g_{\lambda, \mu, \nu} \leq \min(f^{\lambda}, f^{\mu}, f^{\nu}),
\end{equation}
where $f^{\alpha} := \frac{k!}{\prod\limits_{u \in [\lambda]} h(u)}$ for a partition $\alpha$ of length $k$.

Some progress has been made to understand conditions on $\lambda, \mu, \nu$ for which $g_{\lambda, \mu, \nu}=0$. Denote by $k\alpha$ the partition obtained by multiplying each part of $\alpha$ by $k$. Littlewood-Richardson coefficients satisfy a \emph{saturation property}: 
\[c_{\lambda, \mu}^{\nu} = 0 \iff c_{k\lambda, k\mu}^{k\nu} = 0.\]The Kronecker coefficients do not satisfy such a property universally: 
\[g_{(1, 1), (1, 1), (1, 1)} = 0,\quad\text{ but }\quad g_{(2, 2),(2, 2),(2, 2)} = 1.\]

Even deciding ``$g_{\lambda, \mu, \nu} = 0$?" is NP-hard~\cite{IkMuWa17}. There are numerous vanishing conditions known- typically expressed as inequalities in the parts of $\lambda, \mu, \nu$ which guarantee that the coefficient $g_{\lambda, \mu, \nu}$ is zero. A classical result of Murnaghan and Littlewood (appearing for example in \cite{JaKe81}) is that for any non-zero Kronecker coefficient $g_{\lambda, \mu, \nu}$, it follows that $\overline{\lambda} \leq \overline{\mu} + \overline{\nu}$, where $\overline{\gamma}$ is the partition obtained by deleting the first part of partition $\gamma$. Consider the set of points constructed by concatenating partitions of fixed length. Those points that come from partitions giving a non-zero Kronecker coefficient have a nice geometry. Specifically, 
\begin{equation}
    Kron_{l,m,n} := \{(\lambda, \mu, \nu)\in \mathbb{Z}^{l+m+n} : g_{\lambda, \mu, \nu} \neq 0,\ \ell(\lambda) \leq l,\ \ell(\mu) \leq m, \ell(\nu)  \leq n\}
\end{equation} 
is a finitely generated semigroup in $\mathbb{Z}^{l+m+n}$ that generates a rational polyhedral cone. Following Manivel~\cite{Man15}, we call this cone the \emph{Kronecker polyhedron} and denote it $PKron_{l,m,n}$. In~\cite{Kl04} the cone $PKron_{l,m,n}$ is computed explicitly for small values of $l,m,n$, and it seems the number of inequalities increases rapidly. While this set is theoretically computable for any positive integers $l,m,n$, it is quickly computationally infeasible to do so. Another set of vanishing conditions valid for triples of partitions of any lengths were given recently by Ressayre in~\cite[Theorems 1 \& 2]{Re19}.

A classic result of Murnaghan states that for partitions $(\lambda, \mu, \nu)$ the sequence $\left(g_{\lambda + (k), \mu + (k) \nu + (k)}\right)_{k \geq 0}$ eventually stabilizes. Since then, many other partition triples $\alpha, \beta, \gamma$ with this property have been identified-- that is, the values of the sequence $\left(g_{\lambda + k\gamma, \mu + k\alpha, \nu + k\beta}\right)_{k \geq 0}$ stabilize for any choice of $\lambda, \mu, \nu$. Such triples $(\alpha, \beta, \gamma)$ are called \emph{stable triples}.
Stabilization phenomenon have been studied in \cite{BrOrRo11, Man15, Man15-2, Pell21, Stem14}.


Applications of Kronecker coefficients extend beyond the realm of algebraic combinatorics. The \emph{Geometric Complexity Theory} (GCT) program, developed by Mulmuley and Sohoni, with the goal of solving $P$ versus $NP$, relies heavily on the computation of Kronecker coefficients as one of its main ingredients (see \cite{BlMuSo15, IkMuWa17, Mu11}). More specifically, problems of positivity (as discussed in the Appendix of \cite{BrOrRo09-2} entitled \emph{Erratum to the saturation hypothesis (SH) in 
``Geometric Complexity Theory VI''} and contributed by Mulmuley) related to the previously described saturation problems play an important role.

Kronecker coefficients appear in quantum computing where they encode the relationship between composite systems and their subsystems \cite{ChDoWa12, ChHaMi07, ChMi06}. As in the case of GCT, being able to determine the positivity of Kronecker coefficients is useful. In the context of quantum computing, non-zero Kronecker coefficients correspond to \emph{admissible spectral triples} which play an important role in the study of bipartite quantum states in quantum information theory \cite{Ch06}.

\subsection{Kronecker coefficients and vector partition functions} \label{sec:KCandVP}

Here, we address many of these fundamental questions on Kronecker coefficients using a detailed analysis of Eq.~\eqref{eq:schur}. The first step is  to deduce an expression for $g_{\lambda, \mu, \nu}$ using coefficient extraction of multivariate Taylor series of rational functions. This formulation allows us to represent Kronecker coefficients as a signed sum of \emph{vector partition function} evaluations. Let~$A$ be a~$d \times n$ matrix~$A$ with integer entries satisfying \[\operatorname{ker}(A) \cap \mathbb{R}^n_{\geq 0} = \{\BF{0}\}.\] Denote the columns of $A$ by $\BF{a}_1, \dots, \BF{a}_n$. The \emph{vector partition function} $p_A : \mathbb{Z}_{\geq 0}^m \to \mathbb{Z}_{\geq 0}$ is the counting function 
\[
p_A(\BF{b}) :=  \# \{\BF{x} \in \mathbb{N}^d : A\BF{x} = \BF{b} \}.
\]

Geometrically, $p_A(\BF{b})$ is the discrete volume of the polyhedron defined by the solutions of $A\BF{x}=\BF{b}$ and the inequalities $x_i \geq 0$ for $i = 1,\dots,d$. The generating function formulation is in terms of the coefficient of the term $\BF{x}^{\BF{b}}$ in the Taylor series expansion of a product of geometric series:
\begin{equation}
    p_A(\BF{b}) = [\BF{x}^{\BF{b}}] \prod\limits_{j=1}^{n} \frac{1}{{1-\BF{x}}^{\BF{a}_j}},
\end{equation}
with the convention that for vectors $\BF{u},\BF{v} \in \mathbb{Z}^d$, $\BF{u}^{\BF{v}}$ denotes the product $\prod\limits_{i=1}^{d} u_i^{v_i}$. The rational function $\prod\limits_{j=1}^{n} \frac{1}{{1-\BF{x}}^{\BF{a}_j}}$ is called the \emph{vector partition generating function}.
Sturmfels~\cite{Stur94} determined that the vector partition function is a piecewise quasi-polynomial whose domains of quasi-polynomiality are convex polyhedral cones called \emph{chambers} of a fan called the \emph{chamber complex} of $A$ (defined by Zelevinsky, Alekseevskaya, and Gelfand \cite{AlGeZe87}). 
Barvinok gave an algorithm which allows one to compute $p_A(\BF{b})$ in polynomial-time for fixed dimension $n$ of the the polytope $A\BF{x} = \BF{b}, \BF{x} \geq \BF{0}$ \cite{Barv94} .  An adapted version of this algorithm (the Barvinok-Woods algorithm) has been implemented in C (the implementation is named \emph{Barvinok}) and is publicly available \cite{KoVeWo08}.

 
We reformulate the expression for~$g_{\lambda, \mu, \nu}$ given in~\cite[Theorem 26]{MiRoSu21} as Theorem~\ref{theo:vpf-to-kron} below. The main ingredients in this approach are:
\begin{enumerate}
    \item a matrix $A^{m,n}$ and its vector partition function $p_{A^{m,n}}$;
    \item vectors $\alpha, \beta$;
    \item linear functions $r_s, r_t$;
    \item linear functions $l_s(\cdot; \sigma), l_t(\cdot; \sigma)$ defined for each $\sigma \in \mathfrak{S}_{mn}$.
\end{enumerate}
The quantities $\alpha, \beta$, and the linear functions $r_s, r_t, l_s, l_t$ (which all depend on $m,n$) are explicitly given in the discussion after Theorem \ref{theo:vpf-to-kron} and defined (implicitly) in \cite{MiRoSu21}. The matrices $A^{m,n}$ are given implicitly in~\cite{MiRoSu21}; we give explicitly only the cases $m=2$, $n =3,4$ used in our work. For given $m,n$ and $\sigma$, we call the function in the parts of $\lambda, \mu, \nu$, $\mathbf{b}^{m,n}(\lambda, \mu, \nu; \sigma) := (r_s(\mu, \nu) + \alpha - l_s(\lambda; \sigma),\ r_t(\mu, \nu) + \beta - l_t(\lambda; \sigma))$ the \emph{vector partition function input of $\sigma$}. Additionally, we refer to the quantity 
\begin{equation}
\operatorname{sgn}(\sigma)\ p_{A^{m,n}}\biggr(\mathbf{b}^{m,n}(\lambda, \mu, \nu; \sigma)\biggr)
\end{equation}
as the \emph{contribution of the alternant term associated to $\sigma$}.
In general it will be clear to which $m,n$ we refer, but we explicitly state this when needed.

\begin{theo} \label{theo:vpf-to-kron}
Let~$m,n$ be positive integers. Then for any partitions $\lambda, \mu, \nu$ with $\ell(\lambda) \leq mn,\ \ell(\mu) \leq m,\ \ell(\nu) \leq n$, we have
\begin{equation} \label{eq:vpf-to-kron}
    g_{\lambda, \mu, \nu} = \sum_{\sigma \in \mathfrak{S}_{mn}} \operatorname{sgn}(\sigma)\ p_{A^{m,n}}\biggr(\mathbf{b}^{m,n}(\lambda, \mu, \nu; \sigma)\biggr).
\end{equation}
\end{theo}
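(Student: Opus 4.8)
The plan is to derive \eqref{eq:vpf-to-kron} from the Schur identity \eqref{eq:schur} by a single coefficient extraction, reorganised as a lattice-point count; this is the argument of \cite[Theorem 26]{MiRoSu21}, and the task is to rerun it while keeping explicit track of the data $A^{m,n},\alpha,\beta,r_s,r_t,l_s,l_t$. Write $\delta_k=(k-1,k-2,\dots,1,0)$, and for a length-$k$ integer vector $\gamma$ and variables $z=(z_1,\dots,z_k)$ let $a_\gamma(z)=\det\bigl(z_i^{\gamma_j}\bigr)=\sum_{\pi\in\mathfrak S_k}\operatorname{sgn}(\pi)\,z^{\pi\cdot\gamma}$, so that $s_\alpha(z)=a_{\alpha+\delta_k}(z)/a_{\delta_k}(z)$ whenever $\ell(\alpha)\le k$. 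First I would fix an ordering of the $mn$ variables $x_iy_j$, apply the bialternant formula to all three Schur functions in \eqref{eq:schur}, and clear denominators:
\[
\frac{a_{\delta_m}(X)\,a_{\delta_n}(Y)}{a_{\delta_{mn}}(XY)}\;a_{\lambda+\delta_{mn}}(XY)=\sum_{\mu,\nu}g_{\lambda,\mu,\nu}\,a_{\mu+\delta_m}(X)\,a_{\nu+\delta_n}(Y).
\]
Because $\mu+\delta_m$ and $\nu+\delta_n$ are strictly decreasing, the monomial $x^{\mu+\delta_m}y^{\nu+\delta_n}$ occurs in $a_{\mu'+\delta_m}(X)\,a_{\nu'+\delta_n}(Y)$ only for $(\mu',\nu')=(\mu,\nu)$, and there with coefficient $1$; hence $g_{\lambda,\mu,\nu}$ is the coefficient of $x^{\mu+\delta_m}y^{\nu+\delta_n}$ in the left-hand side. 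Expanding $a_{\lambda+\delta_{mn}}(XY)=\sum_{\sigma\in\mathfrak S_{mn}}\operatorname{sgn}(\sigma)\,M_\sigma$ (a sum of $(mn)!$ signed monomials $M_\sigma$) then produces both the index set $\mathfrak S_{mn}$ and the sign $\operatorname{sgn}(\sigma)$ of \eqref{eq:vpf-to-kron}; the exponent of $M_\sigma$ is affine-linear in the parts of $\lambda$ for each fixed $\sigma$.

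The crux is to identify the leftover factor $a_{\delta_m}(X)\,a_{\delta_n}(Y)/a_{\delta_{mn}}(XY)$ as a vector partition generating function up to a Laurent monomial. Writing each difference $z_p-z_q=z_p(1-z_q/z_p)$ and cancelling the factors of $a_{\delta_m}(X)$ and $a_{\delta_n}(Y)$ against like factors of $a_{\delta_{mn}}(XY)$, one finds, in a suitable region of convergence,
\[
\frac{a_{\delta_m}(X)\,a_{\delta_n}(Y)}{a_{\delta_{mn}}(XY)}=L\cdot\prod_{j<l}\frac{1}{(1-y_l/y_j)^{m-1}}\;\prod_{i<k}\frac{1}{(1-x_k/x_i)^{n-1}}\;\prod_{\substack{i<k\\ j\ne l}}\frac{1}{1-x_ky_l/(x_iy_j)},
\]
with $L$ a fixed Laurent monomial. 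The exponent vectors of the monomials $y_l/y_j$, $x_k/x_i$, $x_ky_l/(x_iy_j)$ appearing here are pointed — one checks they all lie in one open half-space — so the matrix $A^{m,n}$ whose columns list these exponents with their multiplicities has $\operatorname{ker}(A^{m,n})$ meeting the nonnegative orthant only in $\BF0$, whence $p_{A^{m,n}}$ is well defined and the displayed rational function equals $L$ times its vector partition generating function.

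Substituting this back into the coefficient extraction, the $\sigma$-summand becomes $\operatorname{sgn}(\sigma)$ times the coefficient of a single monomial in $L\cdot(\text{vp generating function})$, that is $\operatorname{sgn}(\sigma)\,p_{A^{m,n}}(\BF b)$, where $\BF b$ is obtained from $\mu+\delta_m$ and $\nu+\delta_n$ by subtracting the exponents of $M_\sigma$ and of $L$. Splitting $\BF b$ into its $x$- and $y$-blocks, and each block into its $(\mu,\nu)$-linear part, its constant part, and its $\sigma$-dependent $\lambda$-linear part, reads off $r_s,r_t$, then $\alpha,\beta$, then $l_s(\,\cdot\,;\sigma),l_t(\,\cdot\,;\sigma)$, and identifies $\BF b=\BF b^{m,n}(\lambda,\mu,\nu;\sigma)$; this is \eqref{eq:vpf-to-kron}.

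The main obstacle is precisely this last block of bookkeeping: performing the cancellation correctly for general $m,n$, choosing the region of convergence (equivalently, a triangular change of variables) so that the surviving denominator is genuinely a product of $1-(\text{monomial})$ with pointed exponents, and then pinning down the explicit $A^{m,n},\alpha,\beta,r_s,r_t,l_s,l_t$ and verifying the kernel condition. Since all of this is carried out in \cite{MiRoSu21}, in practice the proof reduces to invoking \cite[Theorem 26]{MiRoSu21} and checking that its output matches the ingredients recalled after the statement of Theorem~\ref{theo:vpf-to-kron}. The remaining points — legitimacy of the formal Laurent expansion, and the fact that $p_{A^{m,n}}\bigl(\BF b^{m,n}(\lambda,\mu,\nu;\sigma)\bigr)=0$ for all but finitely many $\sigma$ so that the sum is finite — are routine.
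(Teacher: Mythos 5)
Your high-level plan — apply the bialternant formula to all three Schur functions, clear denominators, read off $g_{\lambda,\mu,\nu}$ by coefficient extraction, expand $a_{\lambda+\delta_{mn}}$ over $\mathfrak S_{mn}$ to get the signed sum, and recognise the remaining ratio of alternants as a vector partition generating function — is exactly the route taken in \cite{MiRoSu21}, which is what the paper cites for this theorem (Section~\ref{sec:VPF} reproduces the setup rather than a proof). So the strategy is aligned.

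However, two of your concrete steps deviate from what is needed to land on the objects $A^{m,n}$, $\alpha$, $\beta$, $r_s$, $r_t$, $l_s$, $l_t$ that actually appear in the statement, and you should be explicit about this. First, the paper works with the specialised alphabets $X=(1,x_1,\dots,x_{m-1})$, $Y=(1,y_1,\dots,y_{n-1})$ and uses identity~\eqref{eq:KCusingAlt}, where the right-hand side has the single monomials $S(a_{\mu+\delta_m}[X])$, $S(a_{\nu+\delta_n}[Y])$ rather than full alternants; the free variable count then becomes $m+n-2$, matching the row count of $A^{m,n}$. You keep all $m+n$ variables, which is a valid alternative, but your displayed simplification of $a_{\delta_m}(X)a_{\delta_n}(Y)/a_{\delta_{mn}}(XY)$ is therefore the unspecialised version: it has only the three binomial types $1-y_l/y_j$, $1-x_k/x_i$, $1-x_ky_l/(x_iy_j)$, whereas after setting $x_m=y_n=1$ the paper's factorisation $\mathcal A\cdots\mathcal F$ in Eqs.~\eqref{eq:A}--\eqref{eq:F} picks up the additional types $x_i-y_j$, $1-x_iy_j$, $1-x_i$, $1-y_j$, $x_k-x_iy_j$, $y_k-x_iy_j$. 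Your column count happens to match $\binom{mn}{2}-\binom m2-\binom n2$, but the columns themselves are different. Second, your remark that one can check pointedness directly with a linear functional (say $\phi(e_i^X)=ni$, $\phi(e_j^Y)=j$) is correct, but in your unspecialised coordinates the exponent vectors all satisfy two independent linear relations (the $X$-coordinates sum to $0$ and the $Y$-coordinates sum to $0$), so the resulting matrix has rank $m+n-2$ inside $\mathbb Z^{m+n}$ and is not the $A^{m,n}$ of the statement. The variable substitution in Eqs.~\eqref{eq:s-change}--\eqref{eq:t-change} is not merely a convenience: together with the specialisation it is precisely what produces $A^{m,n}$ with non-negative integer entries, $m+n-2$ rows, and the stated $\alpha,\beta,r_s,r_t,l_s,l_t$. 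Since you acknowledge that the final bookkeeping is deferred to \cite[Theorem~26]{MiRoSu21} — as does the paper itself — the plan is sound; just replace your displayed denominator formula with the specialised $\mathcal A\cdots\mathcal F$ factorisation and carry out the $s,t$ substitution before declaring the matrix to be $A^{m,n}$.
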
 

The following expressions are valid for all integers $u,v$ with $1 \leq u \leq m-1$ and $1 \leq v \leq n-2$. The components of the vectors $\alpha, \beta$ are:
\begin{align*}
  \alpha_0 &= \frac{1}{2} \, {\left(n m + n - m - 2\right)} {\left(n - 1\right)} {\left(m - 1\right)} \\
  \alpha_u &=  \frac{1}{2} \, {\left(u^{2} n - 2 \, u n m + 2 \, n m^{2} - u^{2} + u - n - 2 \, m + 2\right)} {\left(n - 1\right)}\\
  \beta_v &= \frac{1}{12} \, {\left(8 \, n^{2} m^{2} - 6 \, v n m + 5 \, n^{2} m - 10 \, n m^{2} + 6 \, v^{2} - 12 \, v n + 6 \, v m - 19 \, n m + 2 \, m^{2} + 18 \, v + 14 \, m\right)} {\left(m - 1\right)}.
\end{align*}
The components of the vectors $r_s(\mu, \nu)$ and $r_t(\mu, \nu)$ are:
\begin{align*}
    r_s(\mu, \nu)_0 &= |\nu| - \nu_1 + \binom{n-1}{2} \\
    r_s(\mu, \nu)_u &= \sum_{i=u+1}^{m} \mu_{i} + |\nu| - \nu_1 + \binom{m-u}{2} + \binom{n-1}{2} \\
    r_t(\mu, \nu)_v &= \sum_{i=2}^{m} (i-1)\mu_{i} + (m-1)\sum_{j=2}^{v+1} \nu_{j} + m\sum_{j=v+2}^{n} \nu_{j} + \binom{m}{3} + (m-1)\binom{n-1}{2} + \binom{n-v-1}{2}.
\end{align*}
The components of the vectors $l_s(\lambda; \sigma)$ and $l_t(\lambda; \sigma)$ are: 
\begin{align*}
    l_s(\lambda; \sigma)_0 &= \sum_{i=m+1}^{mn} \biggr(\lambda_{\sigma(i)} + \delta_{\sigma(i)}\biggr) \\
    l_s(\lambda; \sigma)_u &= \sum_{i=u+1}^{m+u(n-1)} \biggr(\lambda_{\sigma(i)} + \delta_{\sigma(i)}\biggr) + 2\sum_{i=m+u(n-1) + 1}^{mn} \biggr( \lambda_{\sigma(i)} + \delta_{\sigma(i)}\biggr) \\
    l_t(\lambda; \sigma)_v &= \sum_{i=2}^{m} (i-1)\biggr(\lambda_{\sigma(i)} + \delta_{\sigma(i)}\biggr) + (m-1)\sum_{i=m+1}^{m+v} \biggr( \lambda_{\sigma(i)} + \delta_{\sigma(i)} \biggr) + m\sum_{i=m+v+1}^{m+n-1} \biggr( \lambda_{\sigma(i)} + \delta_{\sigma(i)}\biggr)\\
    &+ \sum_{i=1}^{m-1}\sum_{j=1}^{v} (i+m-1) \biggr( \lambda_{\sigma(m+i(n-1)+j)} + \delta_{\sigma(m+i(n-1)+j)} \biggr) + \sum_{i=1}^{m-1}\sum_{j=v+1}^{n-1} (i+m)\lambda_{m+i(n-1) + j}.
\end{align*}
The identity permutation in $\mathfrak{S}_{mn}$ is denoted by $\id$. It is convenient to have an explicit derivation in the case when $\sigma = \id$: 
\begin{align*}
    l_s(\lambda; \id)_0 &= \sum_{i=m+1}^{mn} \lambda_i + \frac{1}{2} \, {\left(n m - m - 1\right)} {\left(n - 1\right)} m \\
    l_s(\lambda; \id)_u &= \sum_{i=u+1}^{m+u(n-1)}\lambda_i + 2\sum_{i=m+u(n-1) + 1}^{mn} \lambda_i + \frac{1}{2}(m-u)\biggr(2mn - u - m - 1\biggr) \\
    &+ (n-1)^2m  - \binom{n}{2} + \frac{1}{2}(n-1)(m-1)\biggr(n(m-1) - m\biggr)\\
    &+ \frac{1}{2}(n-1)(m-u)\biggr(mn - u(n-1) - m - 1\biggr) \\
    l_t(\lambda; \id)_v &= \sum_{i=2}^{m} (i-1)\lambda_{i} + (m-1)\sum_{i=m+1}^{m+v}  \lambda_{i} + m\sum_{i=m+v+1}^{m+n-1} \lambda_{i}\\
    &+ \sum_{i=1}^{m-1}\sum_{j=1}^{v} (i+m-1)\lambda_{m+i(n-1)+j} + \sum_{i=1}^{m-1}\sum_{j=v+1}^{n-1} (i+m)\lambda_{m+i(n-1) + j} \\
    &+ \frac{m}{12} \left( 8\,{m}^{2}-3\,m+1 \right) {n}^{2} -m\left( m+1 \right) 
 \left( 10\,m+6\,v-1 \right) n \\
 &+2\,m\left( 3\,{v}^{2}+3\,vm+2\,{m}^{2}+6\,v+3\,m+1 \right).
\end{align*}

Notably, for all $0 \leq u\leq m-1$, the constant term (with respect to $\lambda_1, \dots, \lambda_{mn},\ \mu_1, \dots, \mu_m,\ \nu_1, \dots, \nu_n$) of the $i$\textsuperscript{th} coordinate of $r_s(\mu, \nu) + \alpha - l_s(\lambda; \id)$ is $0$, and for all $1 \leq v \leq n-2$, the constant term of the $j$\textsuperscript{th} coordinate of $r_t(\mu, \nu) + \beta - l_t(\lambda; \id)$ is also $0$. In other words both $r_s(\mu, \nu) + \alpha - l_s(\lambda; \id)$ and $r_t(\mu, \nu) + \beta - l_t(\lambda; \id)$ are linear forms whose variables are the parts of $\lambda, \mu, \nu$ (and thus so is the vector partition function input $\BF{b}^{m,n}(\lambda, \mu, \nu; \id)$).

The expression in Eq.~\eqref{eq:vpf-to-kron} writes the Kronecker coefficient $g_{\lambda, \mu, \nu}$ as a signed sum of permutations. The single term associated with the identity permutation is the largest, and can be used to derive properties about the Kronecker coefficient. Specifically, for partitions $\lambda, \mu, \nu$ with $\ell(\mu) \leq m,\ \ell(\nu) \leq n\ \ell(\lambda) \leq mn$, the \emph{atomic Kronecker coefficient} $\tilde{g}^{m,n}_{\lambda, \mu, \nu}$ is the coefficient obtained by taking only the contribution of the alternant term corresponding to the identity permutation in Eq.~\eqref{eq:vpf-to-kron} - that is,
\begin{equation}\label{eq:atomic}
    \tilde{g}^{m,n}_{\lambda, \mu, \nu} := p_{A^{m,n}}\biggr(\mathbf{b}^{m,n}(\lambda, \mu, \nu; \id)\biggr).
\end{equation}
Atomic Kronecker coefficients were introduced in~\cite{MiRoSu21}, where it was proven that in the $m=n=2$ case they provide an upper bound for the Kronecker coefficients. These authors also conjecture that they provide an upper bound in general~\cite{MiRoSu18}. This seems to be justified in each computation we have made (in the $m=2, n=3,4$ cases). Interestingly, the atomic Kronecker coefficients depend on the values $m,n$ and not just the indexing partitions. As an example (given also in \cite{MiRoSu21}), consider $\lambda = (12, 7, 4, 1), \mu = (12, 12), \nu = (12, 12)$. If we set, $m=n=2$, the atomic Kronecker coefficient $\tilde{g}^{2,2}_{\lambda, \mu, \nu}$ is $32$ - however, by by padding $\lambda$ and $\nu$ with zeroes (i.e. representing $\lambda, \nu$ as $\lambda = (12, 7, 4, 1, 0, 0), \nu = (12, 12, 0)$), we find that the atomic Kronecker coefficient $\tilde{g}^{2,3}_{\lambda, \mu, \nu}$ in this case is $8793$. The atomic Kronecker coefficients are expressed using a single partition function, which is polynomial time computable for a fixed dimension. However, the dimension grows very quickly as a function of $m,n$.

\subsection{Summary of contribution}

In this work we apply Theorem \ref{theo:vpf-to-kron} to study some of the main questions of Kronecker coefficients: exact computation, vanishing conditions, stability, and upper bounds. In \cite{MiRoSu21} the authors focused on the $m=n=2$ case; we adapt the main ideas of that article to general $m,n$. Section \ref{sec:VPF} describes the pertinent aspects of their work to this article. 

Once an expression of the vector partition function $p_{A^{m,n}}$ as a piecewise quasi-polynomial has been computed, the complexity of using this form to determine the Kronecker coefficient comes from the large number $(mn)!$ of terms in the sum. Significantly fewer than the $(mn)!$ terms are needed (either due to vanishing or cancellation): when $m=n=2$ only $7$ of the $24$ terms are needed, and when $m=2, n=3$ at most $482$ are needed. However, we do not know how many terms are needed in general for a given $m,n$. 

Using this to compute $g_{\lambda, \mu, \nu}$ is efficient for small $m$ and $n$, and we have developed a \emph{Sagemath} tool for computing Kronecker coefficients $g_{\lambda, \mu, \nu}$ for $l \leq 8, m \leq 2, n \leq 4$. The exact formulas are given in Section \ref{sec:explicit-computation}. This section is split into two subsections - in \ref{sec:2-3-6-case} we describe the more restricted case $\ell(\mu) \leq 2, \ell(\nu) \leq 3, \ell(\lambda) \leq 6$, and in \ref{sec:2-4-8-case} we describe the general case.

In Section \ref{sec:gen-bravyi-vanish}, we show vanishing conditions (conditions on $\lambda, \mu, \nu$ ensuring that the coefficient in question is $0$) on the atomic Kronecker coefficient give vanishing conditions for the Kronecker coefficients. We subsequently deduce explicit conditions. These are given in Theorem \ref{theo:gen-bravyi}. For each $m,n$ we obtain a set of $m + n - 2$ conditions for partitions $\lambda, \mu, \nu$ with $\ell(\mu) \leq m,\ \ell(\nu) \leq n,\ \ell(\lambda) \leq mn$. Our conditions have the advantage of being easy to compute and implement practically.

By considering the set of partition triples $(\lambda, \mu, \nu)$ satisfying the equation
\begin{equation} \label{eq:atomic-origin}
    \BF{b}^{m,n}(\lambda, \mu, \nu; \id) = \BF{0}
\end{equation}
we obtain a stable face of the Kronecker cone $PKron_{m,n,mn}$. Additionally, each $(\lambda, \mu, \nu)$ satisfying the above equation has the property that $g_{\lambda, \mu, \nu} = 1$, and moreover the partition triple is stable (Theorem \ref{theo:stability}). Eq.~\eqref{eq:atomic-origin} is natural to consider from the point of view of the expression for the Kronecker coefficient given in Eq.~\eqref{eq:vpf-to-kron}. In this case, the contribution of the alternant term associated to the identity permutation is $1$, and the contribution of all other alternant terms is $0$ (and so the atomic Kronecker coefficient and Kronecker coefficient are both equal to $1$). These results are described in Section \ref{sec:stability}.

 The atomic Kronecker coefficient can be bounded from above using binomial coefficients (Theorem \ref{theo:atomic-bounds}). By bounding each of the terms of Eq.~\eqref{eq:vpf-to-kron} we are able to obtain upper bounds for the Kronecker coefficients which seem to be best known in certain cases. This is described in Section \ref{sec:kc-bounds}, and the main results are Corollaries \ref{cor:kc-bounds-general} and \ref{cor:kc-bounds}.
 
 Finally, in Section \ref{sec:conclusion} we summarize some open questions.

\section{Vector partition functions and Kronecker coefficients}
\label{sec:VPF}
The central formula, Eq.~\eqref{eq:vpf-to-kron}, was developed by Mishna, Rosas and Sundaram~\cite{MiRoSu21}. It is deduced from the formula using Schur polynomials, determinant formulas for Schur polynomials and, a variable substitution.  We reproduce some of the details here to establish notation. 

\subsection{From Schur polynomials to vector partition generating functions}
We recall the \emph{staircase partition} $\delta^{(k)} = (k-1, k-2, \dots, 1, 0)$. For $\lambda$ with $\ell(\lambda) \leq k$, the \emph{alternant} $a_{\lambda}(x_1, \dots, x_k)$ is the anti-symmetric polynomial
\begin{equation}
    a_{\lambda}(x_1, \dots, x_k) := \det(x_i^{\lambda_j})_{1 \leq i,j \leq k}.
\end{equation}

An expression for the Kronecker coefficients involving alternants is
\begin{equation}\label{eq:KCusingAlt}
    \frac{a_{\delta_n[X]}a_{\delta_n[Y]}}{a_{\delta_{mn}[XY]}}a_{\lambda + \delta_{mn}}[XY] = \sum_{\mu, \nu} g_{\lambda, \mu, \nu}S(a_{\mu + \delta_{m}}[X])S(a_{\nu + \delta_{n}}[Y]),
\end{equation}
where $X = (1, x_1, \dots, x_{m-1}), Y =  (1, y_1, \dots, y_{n-1}), XY = (1, x_1, \dots, x_{m-1}, y_1, \dots, y_{n-1}, x_1y_1, x_1y_2, \dots, x_{m-1}y_{n-1})$, and
\begin{equation}
    S(a_{\alpha}(z_1, \dots, z_k)) = \prod_{i=1}^{k}{z_i^{\alpha_i}},
\end{equation}
for a partition $\alpha$ of length at most $k$. 

The ratio of alternants $\frac{a_{\delta_m}[X]a_{\delta_m}[Y]}{a_{\delta_{mn}}[XY]}$
simplifies to the rational function 
\begin{equation}
    \frac{a_{\delta_m}[X]a_{\delta_n}[Y]}{a_{\delta_{mn}}[XY]} = \frac{1}{\mathcal{A}\mathcal{B}\mathcal{C}\mathcal{D}\mathcal{E}\mathcal{F}}
\end{equation}
with the following polynomials:
\begin{align}
    \mathcal{A} &= \prod_{j=1}^{n} \prod_{i=1}^{m} (x_i - y_j)\\ \label{eq:A}
    \mathcal{B} &= \prod_{j=1}^{n}\prod_{i=1}^{m} (1-x_iy_j) \\
    \mathcal{C} &= \prod_{i=1}^{m-1} x_i^{n-1} \prod_{j=1}^{n-1} y_j^{m-1} \prod_{i=1}^{m-1} (1 - x_i) \prod_{j=1}^{n-1} (1-y_j)^{m-1} \\
    \mathcal{D} &= \prod_{k=1, k \neq i}^{m-1} \prod_{i=1}^{m-1}\prod_{j=1}^{n-1} (x_k - x_iy_j) \prod_{k=1, k \neq i}^{m-1} \prod_{i=1}^{m-1}\prod_{j=1}^{n-1} (y_k - x_iy_j) \\
    \mathcal{E} &= \prod_{j \neq l = 1}^{n-1}\prod_{1 \leq i < k \leq m-1} (x_iy_j - x_ky_l) \\
    \mathcal{F} &= \prod_{i=1}^{m-1} x_i^{\binom{n-1}{2}} \prod_{j=1}^{n-1} y_j^{\binom{m-1}{2}} \prod_{1 \leq i < k \leq m-1}(x_i - x_k)^{n-1} \prod_{1 \leq j < l \leq n-1} (y_j-y_l)^{m-1}. \label{eq:F}
\end{align}
After the variable substitution 
\begin{align}
  x_i &= s_1s_2\dots s_i(t_1t_2\dots t_{n-2})^i \quad \text{ for } 1 \leq i \leq m-1, \label{eq:s-change} \\
  \text{and } y_j &= s_0s_1\dots s_{m-1} (t_1t_2\dots t_{n-2})^{m-1} t_1t_2\dots t_{j-1} \quad \text{ for } 1 \leq j \leq n-1 \label{eq:t-change}
\end{align}
the rational function $\frac{1}{\mathcal{A}\mathcal{B}\mathcal{C}\mathcal{D}\mathcal{E}\mathcal{F}}$ can be written as the product 
\[
\mathbf{s}^{\alpha}\mathbf{t}^{\beta} F_{m,n}(s_0, s_1, \dots, s_{m-1}, t_1, \dots, t_{n-2}) 
\]
where $F_{m,n}$ is a vector partition generating function in the variables $s_0, s_1, \dots, s_{m-1}, t_1, \dots, t_{n-2}$. After the variable substitution, the terms $S(a_{\mu + \delta_m}[X])$ and $S(a_{\nu + \delta_n}[Y])$ become $\mathbf{s}^{r_s(\mu, \nu)}$ and $\mathbf{t}^{r_t(\mu, \nu)}$ respectively. Finally, the term of the determinant \[a_{\lambda + \delta_{mn}}(1, s_0, \dots, s_{m-1}, t_1, \dots, t_{n-2})\] corresponding to permutation $\sigma$ becomes~$\mathbf{s}^{l_s(\lambda; \sigma)}\mathbf{t}^{l_t(\lambda; \sigma)}$.

For a monomial $M$ and variable $x$, by $\deg_x(M)$ we denote the exponent of $x$ in the monomial $M$.
\begin{prop} \label{rem:var-change}
Let $u \in \{1, s_0, \dots, s_{m-1}, t_1, \dots, t_{n-2}\}$. Then
\begin{multline}
    \deg_u(1) \leq \deg_u(x_1) \leq \dots \leq \deg_u(x_{m-1}) \\ \leq \deg_u(y_1) \leq \dots \leq \deg_u(y_{n-1}) \\ \leq \deg_u(x_1y_1) \leq \deg_u(x_1y_2) \leq \dots \leq \deg_u(x_{m-1}y_{n-1}).
\end{multline}
\end{prop}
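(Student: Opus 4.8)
The plan is to verify the chain of inequalities monomial by monomial, by computing the $u$-degree of each of the relevant products $x_i$ and $y_j$ (and the products $x_iy_j$) directly from the substitution formulas~\eqref{eq:s-change} and~\eqref{eq:t-change}. Since each inequality in the chain compares two monomials, and $\deg_u$ is additive over products, it suffices to check that each successive ratio — e.g. $\deg_u(x_{i+1}) - \deg_u(x_i)$, $\deg_u(y_1) - \deg_u(x_{m-1})$, $\deg_u(y_{j+1}) - \deg_u(y_j)$, and so on — is nonnegative, separately for each choice of $u \in \{1, s_0, \dots, s_{m-1}, t_1, \dots, t_{n-2}\}$.

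First I would record the explicit $u$-degrees. From~\eqref{eq:s-change}, $x_i = s_1 s_2 \cdots s_i (t_1 \cdots t_{n-2})^i$, so $\deg_{s_k}(x_i) = \mathbf{1}[1 \le k \le i]$, $\deg_{t_\ell}(x_i) = i$ for $1 \le \ell \le n-2$, $\deg_{s_0}(x_i) = 0$, and $\deg_1(x_i) = 0$ trivially. From~\eqref{eq:t-change}, $y_j = s_0 s_1 \cdots s_{m-1}(t_1\cdots t_{n-2})^{m-1} t_1 \cdots t_{j-1}$, so $\deg_{s_k}(y_j) = 1$ for $0 \le k \le m-1$, $\deg_{t_\ell}(y_j) = (m-1) + \mathbf{1}[\ell \le j-1]$ for $1 \le \ell \le n-2$, and $\deg_1(y_j)=0$. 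Finally $\deg_u(x_iy_j) = \deg_u(x_i) + \deg_u(y_j)$. With these in hand the verification is immediate: the segment $\deg_u(1) \le \deg_u(x_1) \le \cdots \le \deg_u(x_{m-1})$ holds because increasing $i$ only adds factors (or keeps the same ones) in the $s$-variables and strictly increases the power of each $t$-variable; the step $\deg_u(x_{m-1}) \le \deg_u(y_1)$ holds because $y_1$ carries $s_0 s_1 \cdots s_{m-1}(t_1 \cdots t_{n-2})^{m-1}$, which dominates $x_{m-1} = s_1 \cdots s_{m-1}(t_1\cdots t_{n-2})^{m-1}$ in every variable (it has the extra $s_0$ and otherwise matches); the segment $\deg_u(y_1) \le \cdots \le \deg_u(y_{n-1})$ holds since increasing $j$ only appends more $t$-factors; and finally $\deg_u(y_{n-1}) \le \deg_u(x_1y_1)$ holds because $x_1 y_1$ has nonnegative $\deg_u$ for every $u$ in the list while the remaining comparisons among the $x_iy_j$ reduce, via additivity, to the monotonicity already established for the $x_i$ and the $y_j$ taken in the lexicographic-type order $x_1y_1, x_1y_2,\dots,x_1y_{n-1}, x_2y_1,\dots$.

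The one place that needs a word of care is the transition in the list of products $x_iy_j$ from the block with first index $i$ to the block with first index $i+1$, i.e. the step $\deg_u(x_i y_{n-1}) \le \deg_u(x_{i+1} y_1)$. This is the analogue of the step $\deg_u(x_{m-1}) \le \deg_u(y_1)$ one level up: I would reduce it to showing $\deg_u(y_{n-1}) \le \deg_u(x_1) + \deg_u(y_1)$ after cancelling the common $\deg_u(x_i)$ from both sides and noting $\deg_u(x_{i+1}) = \deg_u(x_i) + \deg_u(x_1\text{-type increment})$; concretely one checks $\deg_u(x_{i+1}y_1) - \deg_u(x_iy_{n-1}) = \big(\deg_u(x_{i+1}) - \deg_u(x_i)\big) - \big(\deg_u(y_{n-1}) - \deg_u(y_1)\big)$ and verifies this is $\ge 0$ for each $u$. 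For $u = s_k$ this difference is $\mathbf{1}[k = i+1] - 0 \ge 0$; for $u = t_\ell$ it is $1 - \mathbf{1}[\ell \le n-2] = 0$ (using $\ell \le n-2$ always); for $u = s_0$ and $u = 1$ it is $0$. So this potentially worrisome step is in fact an equality or a gain, and the whole chain follows. I expect this block-transition step to be the only genuinely non-obvious part; everything else is a direct bookkeeping consequence of the two substitution formulas together with additivity of $\deg_u$.
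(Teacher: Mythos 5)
Your proposal is correct, and it is exactly the direct verification the paper leaves implicit: Proposition~\ref{rem:var-change} is stated without proof, and the intended argument is precisely this bookkeeping from the substitutions~\eqref{eq:s-change}--\eqref{eq:t-change}. Your degree tables ($\deg_{s_k}(x_i)=\mathbf{1}[1\le k\le i]$, $\deg_{t_\ell}(x_i)=i$, $\deg_{s_k}(y_j)=1$, $\deg_{t_\ell}(y_j)=(m-1)+\mathbf{1}[\ell\le j-1]$) are right, and your identification of the block transition $\deg_u(x_iy_{n-1})\le\deg_u(x_{i+1}y_1)$ as the only non-obvious step, together with the check that the difference is $\mathbf{1}[k=i+1]$ for $u=s_k$ and $0$ for $u=t_\ell$, $s_0$, $1$, is exactly what is needed. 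One sentence should be repaired: the claim that $\deg_u(y_{n-1})\le\deg_u(x_1y_1)$ ``because $x_1y_1$ has nonnegative $\deg_u$'' is not a justification (nonnegativity of both sides proves nothing); but this step is just the $i=0$ instance of your block-transition computation (with $x_0:=1$), namely $\deg_u(x_1)-\bigl(\deg_u(y_{n-1})-\deg_u(y_1)\bigr)$, which equals $1-1=0$ for $u=t_\ell$ and $\mathbf{1}[k=1]\ge 0$ for $u=s_k$, so no genuine gap remains once you cite that computation instead.
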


\subsection{The vector partition functions $p_{A^{m,n}}$}
By $\mathcal{P}_A(\BF{b})$ we denote the set $\mathcal{P}_A(\BF{b}) := \{\BF{x} \in \mathbb{Z}_{\geq 0}^n : A\BF{x} = \BF{b}\}$ of vector partitions of $\BF{b}$, so that $p_A(\BF{b})$ is the cardinality of $\mathcal{P}_A(\BF{b})$. By exploiting some of the properties of the matrices $A^{m,n}$ given in Corollary 30 of~\cite{MiRoSu21} (Properties 1--5 in the list below), we can deduce properties of the corresponding vector partition functions  $p_{A^{m,n}}$ without explicitly computing the associated piecewise quasi-polynomials:
\begin{enumerate}[label=(\roman*)]
    \item each entry of $A^{m,n}$ is a non-negative integer;
    \item the largest entry of $A^{m,n}$ is $2m -1$;
    \item the number of columns of $A^{m,n}$ is ${\binom{mn}{2}} - {\binom{n}{2}} - {\binom{m}{2}}$;
    \item the number of rows of $A^{m,n}$ is $m + n - 2$;
    \item each of the standard basis vectors appears as a column of $A^{m,n}$, and so its rank is $m + n - 2$.
\end{enumerate}

\section{Explicit computation of Kronecker coefficients}
\label{sec:explicit-computation}

When the partition lengths are sufficiently small, it is computationally feasible to determine the vector partition functions needed to compute individual Kronecker coefficients $g_{\lambda, \mu, \nu}$.  We provide explicit formulas for two cases here, starting from Eq.~\eqref{eq:vpf-to-kron}, rewritten below:
\[ g_{\lambda, \mu, \nu} = \sum_{\sigma \in \mathfrak{S}_{mn}} \operatorname{sgn}(\sigma)\ p_{A^{m,n}}\biggr(\mathbf{b}^{m,n}(\lambda, \mu, \nu; \sigma)\biggr).\]
We compute $p_{A^{m,n}}$ first for $m=2,n=3$, then $m=2,n=4$ (the $m=n=2$ case appears in \cite{MiRoSu21}). Remark that, to compute a coefficient, it is best to minimize the choice of $m$ and $n$ that bound the lengths of $\mu$ and $\nu$. The first optimization comes from trying to identify which terms in the sum are zero. Recall, in the $m=n=2$ case, only $7$ of the terms are needed since of the original $4!=24$ terms in the right hand side:  $13$ of them always evaluate to zero for partitions $\lambda, \mu, \nu$, and another $4$ of them cancel pairwise. To eliminate terms in other cases, we consider restrictions imposed by positivity in the linear algebra system, and the partition inequalities on the parts of the partitions.

\subsection{Exact expressions for $g_{\lambda, \mu, \nu}$ when $\ell(\lambda) \leq 6,\ \ell(\mu) \leq 2,\ \ell(\nu) \leq 3$} \label{sec:2-3-6-case}

The matrix $A^{2,3}$ is determined in \cite[Example 5]{MiRoSu21}:
\begin{equation}
A^{2,3} = 
\begin{bmatrix}
    1 & 0 & 0 & 1 & 0 & 0 & 0 & 1 & 1 & 1 & 1 \\
    0 & 1 & 0 & 0 & 1 & 1 & 1 & 1 & 1 & 2 & 2 \\
    0 & 0 & 1 & 1 & 1 & 1 & 2 & 1 & 2 & 2 & 3 \\
\end{bmatrix}
\end{equation}
Using \em Barvinok \em it is straightforward to determine that corresponding vector partition function $p_{A^{2,3}}$ is of degree 8 and has 34 chambers. At most $482$ of the $720$ terms of the alternant $a_{\lambda + \delta_6}$ yield a non-zero contribution to the Kronecker coefficient computation. The most non-zero terms we have found for any given coefficient is $288$. This occurs for $\mu = (99, 99),\ \nu = (66, 66, 66),\ \lambda = (87, 87, 24, 0, 0, 0).$ It is less clear how to find cancelling pairs as in the $m=n=2$ case, so this remains a place for potential optimization - each term represents a vector partition function evaluation, which in the worst case means searching through all chambers. The formula is as follows. 

\begin{prop} 
Let $\lambda, \mu, \nu$ be partitions with $\ell(\lambda) \leq 6,\ \ell(\mu) \leq 2,\ \ell(\nu) \leq 3$. Then the Kronecker coefficient is given by
\begin{multline}
    g_{\lambda, \mu, \nu} = \sum_{\sigma \in \mathfrak{S}_6} \operatorname{sgn}(\sigma) \, \\p_{A^{2,3}}\left(\nu_2 + \nu_3 + 6 - l_s(\lambda; \sigma)_1,\quad \mu_2 + \nu_2 + \nu_3 + 11 - l_t(\lambda; \sigma)_1,\quad \mu_2 + \nu_2 + 2\nu_3 + 13 - l_t(\lambda; \sigma)_2 \right).
\end{multline}
and the atomic Kronecker coefficient is given by
\begin{dmath}
\tilde{g}^{2,3}_{\lambda, \mu, \nu} = p_{A^{2,3}}\left(\nu_2 + \nu_3 - \lambda_3 - \lambda_4 - \lambda_5 - \lambda_6,\quad \mu_2 + \nu_2 + \nu_3 - \lambda_2 - \lambda_3 - \lambda_4 - 2\lambda_5 - 2\lambda_6, \\ \quad \mu_2 + \nu_2 + 2\nu_3 - \lambda_2 - \lambda_3 - 2\lambda_4 - 2\lambda_5 - 3\lambda_6 \right).
\end{dmath}
\end{prop}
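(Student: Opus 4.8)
The plan is to specialise Theorem~\ref{theo:vpf-to-kron} to $m=2$, $n=3$ and substitute the explicit data listed after that theorem. Since $mn=6$, the hypotheses $\ell(\lambda)\le mn$, $\ell(\mu)\le m$, $\ell(\nu)\le n$ become $\ell(\lambda)\le 6$, $\ell(\mu)\le 2$, $\ell(\nu)\le 3$, so the theorem applies verbatim and gives $g_{\lambda,\mu,\nu}=\sum_{\sigma\in\mathfrak{S}_6}\operatorname{sgn}(\sigma)\,p_{A^{2,3}}\bigl(\mathbf{b}^{2,3}(\lambda,\mu,\nu;\sigma)\bigr)$, with $A^{2,3}$ the $3\times 11$ matrix of \cite[Example~5]{MiRoSu21}. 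For $m=2$ the $s$-block of $\mathbf{b}^{2,3}$ consists of the two coordinates indexed $0$ and $1$, and for $n=3$ the $t$-block is the single coordinate indexed $1$, so $\mathbf{b}^{2,3}$ has $m+n-2=3$ coordinates, one per row of $A^{2,3}$.

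The first step is to evaluate the constants. Substituting $m=2$, $n=3$ into the displayed polynomials gives $\alpha_0=5$, $\alpha_1=10$, and $\beta_1=12$; substituting into the formulas for $r_s$ and $r_t$ (using $\binom{n-1}{2}=1$, $\binom{m-u}{2}=\binom{0}{2}=0$, $\binom{m}{3}=0$, and $\binom{n-v-1}{2}=\binom{1}{2}=0$) gives $r_s(\mu,\nu)_0=\nu_2+\nu_3+1$, $r_s(\mu,\nu)_1=\mu_2+\nu_2+\nu_3+1$, and $r_t(\mu,\nu)_1=\mu_2+\nu_2+2\nu_3+1$. Hence $r_s(\mu,\nu)+\alpha=(\nu_2+\nu_3+6,\ \mu_2+\nu_2+\nu_3+11)$ and $r_t(\mu,\nu)+\beta=(\mu_2+\nu_2+2\nu_3+13)$. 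Plugging these into $\mathbf{b}^{2,3}(\lambda,\mu,\nu;\sigma)=(r_s(\mu,\nu)+\alpha-l_s(\lambda;\sigma),\ r_t(\mu,\nu)+\beta-l_t(\lambda;\sigma))$ and reading off the three coordinates produces exactly the three arguments of $p_{A^{2,3}}$ in the first displayed identity.

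For the atomic coefficient I would specialise instead to $\sigma=\id$. By definition $\tilde{g}^{2,3}_{\lambda,\mu,\nu}=p_{A^{2,3}}\bigl(\mathbf{b}^{2,3}(\lambda,\mu,\nu;\id)\bigr)$, and by the remark immediately following Theorem~\ref{theo:vpf-to-kron} every coordinate of $\mathbf{b}^{2,3}(\lambda,\mu,\nu;\id)$ is a linear form with vanishing constant term. So it suffices to compute the linear part of each of $l_s(\lambda;\id)_0$, $l_s(\lambda;\id)_1$, $l_t(\lambda;\id)_1$ and subtract the linear parts of $r_s$ and $r_t$. From the explicit $\id$-formulas at $m=2$, $n=3$ the linear parts are $\lambda_3+\lambda_4+\lambda_5+\lambda_6$ for $l_s(\lambda;\id)_0$; $\lambda_2+\lambda_3+\lambda_4+2\lambda_5+2\lambda_6$ for $l_s(\lambda;\id)_1$ (the coefficient $2$ coming from the second sum, whose lower limit $m+u(n-1)+1$ equals $5$); and $\lambda_2+\lambda_3+2\lambda_4+2\lambda_5+3\lambda_6$ for $l_t(\lambda;\id)_1$ (each of the two double sums contributes a single term, since $m-1=1$ and $v=1$). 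Subtracting from $\nu_2+\nu_3$, $\mu_2+\nu_2+\nu_3$, and $\mu_2+\nu_2+2\nu_3$ respectively gives precisely the stated formula for $\tilde{g}^{2,3}_{\lambda,\mu,\nu}$.

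I expect no conceptual obstacle here: the entire argument is a substitution into Theorem~\ref{theo:vpf-to-kron}. The only thing demanding care is bookkeeping: evaluating the polynomials in $m,n$ and the binomial terms correctly at $(m,n)=(2,3)$, correctly matching the three coordinates of $\mathbf{b}^{2,3}$ with the three rows of the matrix $A^{2,3}$ (taken verbatim from \cite{MiRoSu21}), and handling the degenerate index ranges --- namely $\binom{0}{2}=0$, the unique admissible value $v=1=n-2$, and $m-1=1$ --- so that the collapsing sums are transcribed correctly. For the atomic formula the most tedious part, checking that the constant terms cancel, is bypassed entirely by appealing to the vanishing-constant-term assertion recorded just after Theorem~\ref{theo:vpf-to-kron}.
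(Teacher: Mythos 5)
Your proof is correct and follows exactly the approach the paper intends: the proposition is a direct specialisation of Theorem~\ref{theo:vpf-to-kron} to $(m,n)=(2,3)$, and the paper states it without any written-out proof. Your evaluations $\alpha_0=5$, $\alpha_1=10$, $\beta_1=12$, $r_s(\mu,\nu)+\alpha=(\nu_2+\nu_3+6,\ \mu_2+\nu_2+\nu_3+11)$, $r_t(\mu,\nu)+\beta=(\mu_2+\nu_2+2\nu_3+13)$, and the three linear forms $l_s(\lambda;\id)_0$, $l_s(\lambda;\id)_1$, $l_t(\lambda;\id)_1$ all check out, and the use of the vanishing-constant-term remark to obtain the atomic formula is exactly the right shortcut. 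One small item worth flagging for the reader (not a flaw in your argument): the proposition's displayed formula labels the three subtracted linear forms $l_s(\lambda;\sigma)_1$, $l_t(\lambda;\sigma)_1$, $l_t(\lambda;\sigma)_2$, whereas under the indexing conventions established after Theorem~\ref{theo:vpf-to-kron} (where $l_s$ has coordinates $0,\dots,m-1$ and $l_t$ has coordinates $1,\dots,n-2$) the three coordinates of $\mathbf{b}^{2,3}$ are $l_s(\lambda;\sigma)_0$, $l_s(\lambda;\sigma)_1$, $l_t(\lambda;\sigma)_1$. The atomic formula's explicit $\lambda$-coefficients confirm this second reading, so the proposition's subscripts appear to be a notational slip in the source. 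Your computation matches the correct content, so your proof stands as written.
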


The implementation of Baldoni, Vergne and Walter \cite{BaVeWa17} takes on the order of 1 minute to compute a $2,3,6$ Kronecker coefficient whereas our implementation takes on the order of 10 microseconds. However, they are able to compute dilated Kronecker coefficients and, more generally, expressions that hold over the entire chamber, while our code does not do either.

\subsection{Exact expressions for $g_{\lambda, \mu, \nu}$ when $\ell(\lambda) \leq 8,\ \ell(\mu) \leq 2,\ \ell(\nu) \leq 4$} \label{sec:2-4-8-case}

It is straightforward to determine~$A^{2,4}$ following the same method
\begin{equation}
A^{2,4} = \begin{bmatrix}
0 & 0 & 0 & 1 & 0 & 0 & 0 & 1 & 1 & 0 & 0 & 0 & 0 & 0 & 1 & 0 & 1 & 1 & 1 & 1 & 1 \\
0 & 0 & 1 & 0 & 0 & 1 & 1 & 0 & 0 & 1 & 1 & 1 & 1 & 1 & 1 & 1 & 1 & 1 & 2 & 2 & 2 \\
0 & 1 & 0 & 0 & 1 & 0 & 1 & 1 & 1 & 1 & 1 & 1 & 1 & 2 & 1 & 2 & 2 & 2 & 2 & 3 & 3 \\
1 & 0 & 0 & 0 & 1 & 1 & 0 & 0 & 1 & 1 & 1 & 1 & 2 & 1 & 1 & 2 & 1 & 2 & 2 & 2 & 3
\end{bmatrix}.
\end{equation}
The corresponding vector partition function $p_{A^{2,4}}$ is of degree $17$ with $4328$ chambers. It took roughly 20 days to compute it on the Compute Canada  \em Cedar\em\ research cluster. The vector partition function is available in .sobj format and in .txt format. The .txt format is the raw output from \em Barvinok\em.

Out of the $8! = 40320$ terms of the alternant $a_{\lambda + \delta_8}$, at most $28322$ yield a non-zero contribution to the Kronecker coefficient. It is not apparent if they can be grouped for cancellation as in the $m=n=2$ case.

\begin{prop}
Let $\lambda, \mu, \nu$ be partitions with $\ell(\lambda) \leq 8,\ \ell(\mu) \leq 2,\ \ell(\nu) \leq 4$. Then the Kronecker coefficient is given by
 \begin{dmath}
    g_{\lambda, \mu, \nu} = \sum_{\sigma \in \mathfrak{S}_8} \operatorname{sgn}(\sigma)
    p_{A^{2,4}}(\nu_2 + \nu_3 +\nu_4 + 15 -  l_s(\lambda; \sigma)_1, \mu_2 + \nu_2 + \nu_3 + \nu_4 + 24 - l_s(\lambda; \sigma)_2, \\ \mu_2 + \nu_2 + 2\nu_3 + 2\nu_4 + 32 - l_t(\lambda; \sigma)_1, \mu_2 + \nu_2 + \nu_3 + 2\nu_4 + 27 - l_t(\lambda; \sigma)_2),
\end{dmath}
and the atomic Kronecker coefficient is given by
\begin{dmath}
\tilde{g}^{2,4}_{\lambda, \mu, \nu} = p_{A^{2,4}}(\nu_2 + \nu_3 +\nu_4 - \lambda_3 - \lambda_4 - \lambda_5 - \lambda_6 - \lambda_7 - \lambda_8, \\ \mu_2 + \nu_2 + \nu_3 + \nu_4 - \lambda_2 - \lambda_3 - \lambda_4 - \lambda_5 - 2\lambda_6 - 2\lambda_7 - 2\lambda_8, \\ \mu_2 + \nu_2 + 2\nu_3 + 2\nu_4 - \lambda - \lambda_3 - 2\lambda_4 - 2\lambda_5 - 2\lambda_6 - 3\lambda_7 - 3\lambda_8, \\ \mu_2 + \nu_2 + \nu_3 + 2\nu_4 - \lambda_2 - \lambda_3 - \lambda_4 - 2\lambda_5 - 2\lambda_6 - 2\lambda_7 - 3\lambda_8)
\end{dmath}
\end{prop}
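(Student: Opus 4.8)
The plan is to obtain both identities directly from Theorem~\ref{theo:vpf-to-kron} by specializing to $m=2$, $n=4$; no ingredient beyond that theorem is needed, so the proof consists of writing out the data $A^{2,4}$, $\alpha$, $\beta$, $r_s$, $r_t$, $l_s$, $l_t$ for this pair and collecting terms.

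First I would fix the matrix $A^{2,4}$. Setting $m=2$, $n=4$ in the substitution \eqref{eq:s-change}--\eqref{eq:t-change} gives $x_1 = s_1 t_1 t_2$ and $y_j = s_0 s_1 t_1 t_2\, t_1\cdots t_{j-1}$ for $j=1,2,3$, and turns the denominator $\mathcal{A}\mathcal{B}\mathcal{C}\mathcal{D}\mathcal{E}\mathcal{F}$ (see~\eqref{eq:A}--\eqref{eq:F}) into a monomial prefactor---which one checks is $\mathbf{s}^{\alpha}\mathbf{t}^{\beta}$---times a product of $\binom{8}{2}-\binom{4}{2}-\binom{2}{2}=21$ geometric factors $1/(1-(\text{monomial}))$. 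By Proposition~\ref{rem:var-change} these $21$ exponent vectors are produced in an ordered list, and one matches them column by column against the displayed $4\times 21$ matrix to confirm $A^{2,4}$. This is the same computation that produced $A^{2,3}$ in \cite[Example~5]{MiRoSu21}; alternatively one may invoke the implicit definition of $A^{m,n}$ from \cite{MiRoSu21} and simply check the $m=2$, $n=4$ specialization.

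Next I would evaluate the remaining data at $(m,n)=(2,4)$: the closed forms recorded after Theorem~\ref{theo:vpf-to-kron} give $\alpha=(\alpha_0,\alpha_1)=(12,21)$, $\beta=(\beta_1,\beta_2)=(28,24)$, $r_s(\mu,\nu)=(\nu_2+\nu_3+\nu_4+3,\ \mu_2+\nu_2+\nu_3+\nu_4+3)$, and $r_t(\mu,\nu)=(\mu_2+\nu_2+2\nu_3+2\nu_4+4,\ \mu_2+\nu_2+\nu_3+2\nu_4+3)$. Adding $\alpha$ to $r_s$ and $\beta$ to $r_t$ coordinatewise produces the four affine forms $\nu_2+\nu_3+\nu_4+15$, $\mu_2+\nu_2+\nu_3+\nu_4+24$, $\mu_2+\nu_2+2\nu_3+2\nu_4+32$, and $\mu_2+\nu_2+\nu_3+2\nu_4+27$. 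Substituting these, together with the $m=2$, $n=4$ specializations of $l_s(\lambda;\sigma)$ and $l_t(\lambda;\sigma)$ (with the two $s$-coordinates relabelled $1,2$ in place of $0,1$), into $\mathbf{b}^{m,n}(\lambda,\mu,\nu;\sigma)$ and then into Eq.~\eqref{eq:vpf-to-kron} reproduces the first displayed identity, a signed sum over $\mathfrak{S}_8$.

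Finally, the atomic formula is, by definition~\eqref{eq:atomic}, the $\sigma=\id$ summand of Eq.~\eqref{eq:vpf-to-kron} with the sign dropped; I would obtain it by plugging the $m=2$, $n=4$ specializations of $l_s(\lambda;\id)$ and $l_t(\lambda;\id)$ (equivalently, the $\sigma=\id$ case of the general $l_s,l_t$) together with the affine forms above into $\mathbf{b}^{2,4}(\lambda,\mu,\nu;\id)$. The constant terms in all four coordinates then cancel---precisely the observation recorded just after Theorem~\ref{theo:vpf-to-kron}---leaving the four displayed linear forms in the parts of $\lambda,\mu,\nu$; for instance the first coordinate is $\nu_2+\nu_3+\nu_4-\sum_{i=3}^{8}\lambda_i$. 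There is no conceptual obstacle; the one genuine risk, and the step I would check most carefully, is the symbolic substitution and the column-by-column matching for $A^{2,4}$, since an error there propagates into every coordinate. A purely cosmetic point is reconciling the subscript conventions for $l_s$ and $l_t$ between Theorem~\ref{theo:vpf-to-kron} and the proposition statement.
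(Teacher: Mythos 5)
Your proposal is correct and is exactly the route the paper takes: the proposition is stated as a direct specialization of Theorem~\ref{theo:vpf-to-kron} to $m=2$, $n=4$ (with $A^{2,4}$ obtained by the same substitution method as $A^{2,3}$), and your evaluations $\alpha=(12,21)$, $\beta=(28,24)$, together with the specializations of $r_s$ and $r_t$, reproduce the four affine forms $\nu_2+\nu_3+\nu_4+15$, $\mu_2+\nu_2+\nu_3+\nu_4+24$, $\mu_2+\nu_2+2\nu_3+2\nu_4+32$, $\mu_2+\nu_2+\nu_3+2\nu_4+27$ exactly. Incidentally, carrying out your substitution for the $\sigma=\id$ term also shows that the ``$-\lambda$'' in the third coordinate of the displayed atomic formula is a typo for ``$-\lambda_2$''.
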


\begin{eg}
This formula gives the same result for the following example, take from~\cite{BaOr05}. 
For $\lambda = (6, 4, 4, 1)$, $\mu = (12, 3)$,  $\nu = (5, 4, 3, 3)$, we compute that $g_{\lambda, \mu, \nu} = 4$. The authors of \cite{BaOr05} computed this via a combinatorial rule - in this case the Kronecker coefficient is counting combinatorial objects called \em Kronecker tableaux\em. 
\end{eg}

\begin{eg}
Let $\lambda = (57, 57, 57, 33, 33, 33, 10, 0),\ \mu = (140, 140),\ \nu = (70, 70, 70, 70)$, we compute that $g_{\lambda, \mu, \nu} = 391$. We were unable to compute this example with the package \em SF\em\ (it ran into a memory error after using $203146718216$ bytes), nor the \em Sagemath\em\ symmetric functions package (which also ran into a memory error). It cannot be computed by the Maple package of Baldoni, Vergne, and Walter \cite{BaVeWa17} which specifically handles the cases $\ell(\lambda), \ell(\mu), \ell(\nu) \leq 3$ and $\ell(\lambda) \leq 6, \ell(\mu) \leq 2, \ell(\nu) \leq 3$.
\end{eg}

The $A^{3,3}$ matrix is straightforward to compute, it has 4 rows and 30 columns. However obtaining the piecewise quasi-polynomial representation of the vector partition function was not computationally feasible: we had no results after roughly 30 days on the Compute Canada research cluster Cedar at which time the computation was terminated by the server.

\section{Vanishing conditions} 
\label{sec:gen-bravyi-vanish}
A key to our analysis is a dominance property of  vector partition functions. We use this property to prove Theorem~\ref{theo:gen-bravyi},  a generalization of some non-vanishing conditions for the Kronecker coefficients given in \cite{Bra04}. Let $\BF{u}, \BF{v} \in \mathbb{R}^k$. We say that $\BF{u}$ \emph{dominates} $\BF{v}$ if $\BF{u}_i \geq \BF{v}_i$ for each $1 \leq i \leq k$, and we denote this by $\BF{u} \succeq \BF{v}$. 
\begin{lem} \label{lem:vpf-dominance}
Let $m,n$ be positive integers, if $\BF{a} \succeq \BF{b}$, then $p_{A^{m,n}}(\BF{a}) \geq p_{A^{m,n}}(\BF{b})$ 
\end{lem}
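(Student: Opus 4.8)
The key observation is that $p_{A^{m,n}}(\mathbf{b})$ counts lattice points $\mathbf{x} \in \mathbb{Z}_{\geq 0}^N$ (with $N = \binom{mn}{2} - \binom{n}{2} - \binom{m}{2}$ the number of columns) satisfying $A^{m,n}\mathbf{x} = \mathbf{b}$, and that by Property (v) each standard basis vector $\mathbf{e}_i$ ($1 \leq i \leq m+n-2$) occurs as a column of $A^{m,n}$. The plan is to build an explicit injection $\mathcal{P}_{A^{m,n}}(\mathbf{b}) \hookrightarrow \mathcal{P}_{A^{m,n}}(\mathbf{a})$ whenever $\mathbf{a} \succeq \mathbf{b}$, which immediately gives the cardinality inequality.

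First I would reduce to the case where $\mathbf{a}$ and $\mathbf{b}$ differ in a single coordinate by exactly $1$: since $\succeq$ is generated by such elementary steps (add $\mathbf{e}_i$), and the desired inequality is transitive, it suffices to show $p_{A^{m,n}}(\mathbf{b} + \mathbf{e}_i) \geq p_{A^{m,n}}(\mathbf{b})$ for each $i \in \{1, \dots, m+n-2\}$. Next, fix $i$ and let $j = j(i)$ be an index of a column of $A^{m,n}$ equal to the standard basis vector $\mathbf{e}_i$ (such $j$ exists by Property (v)). Define the map $\phi : \mathcal{P}_{A^{m,n}}(\mathbf{b}) \to \mathcal{P}_{A^{m,n}}(\mathbf{b} + \mathbf{e}_i)$ by $\phi(\mathbf{x}) = \mathbf{x} + \mathbf{e}_j$, i.e. increment the $j$-th coordinate of the vector partition by one. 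Since $A^{m,n}\mathbf{e}_j = \mathbf{e}_i$, we get $A^{m,n}\phi(\mathbf{x}) = A^{m,n}\mathbf{x} + \mathbf{e}_i = \mathbf{b} + \mathbf{e}_i$, and $\phi(\mathbf{x}) \in \mathbb{Z}_{\geq 0}^N$ because we only increased a coordinate; so $\phi$ is well-defined into $\mathcal{P}_{A^{m,n}}(\mathbf{b} + \mathbf{e}_i)$. The map $\phi$ is injective because $\mathbf{x} \mapsto \mathbf{x} + \mathbf{e}_j$ has the obvious left inverse $\mathbf{y} \mapsto \mathbf{y} - \mathbf{e}_j$ (subtracting $1$ from a fixed coordinate). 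Therefore $p_{A^{m,n}}(\mathbf{b}) = |\mathcal{P}_{A^{m,n}}(\mathbf{b})| \leq |\mathcal{P}_{A^{m,n}}(\mathbf{b}+\mathbf{e}_i)| = p_{A^{m,n}}(\mathbf{b}+\mathbf{e}_i)$.

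Finally, I would chain these elementary steps: given arbitrary $\mathbf{a} \succeq \mathbf{b}$ in $\mathbb{Z}^{m+n-2}$ (note that if any coordinate of $\mathbf{a}$ or $\mathbf{b}$ is negative the corresponding partition function value is $0$ by convention and the inequality is trivial or handled by nonnegativity, since $A^{m,n}$ has nonnegative entries forces $\mathbf{b} \geq \mathbf{0}$ for $p_{A^{m,n}}(\mathbf{b}) \neq 0$), write $\mathbf{a} - \mathbf{b} = \sum_{i=1}^{m+n-2} c_i \mathbf{e}_i$ with $c_i = \mathbf{a}_i - \mathbf{b}_i \geq 0$, and apply the single-coordinate inequality $c_1 + \cdots + c_{m+n-2}$ times in succession to obtain $p_{A^{m,n}}(\mathbf{b}) \leq p_{A^{m,n}}(\mathbf{a})$.

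There is essentially no hard step here — the only thing to be careful about is the boundary bookkeeping: confirming that the convention $p_{A^{m,n}}(\mathbf{b}) = 0$ for $\mathbf{b}$ outside $\mathbb{Z}_{\geq 0}^{m+n-2}$ (or more precisely outside the cone spanned by the columns, which here is all of $\mathbb{R}_{\geq 0}^{m+n-2}$ by Property (v)) is compatible with the claimed monotonicity, and that Property (v) is genuinely what licenses the move. The combinatorial injection argument is robust and does not require knowing the explicit piecewise quasi-polynomial form of $p_{A^{m,n}}$, which is exactly the point emphasized in Section~\ref{sec:VPF}.
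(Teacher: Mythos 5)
Your proof is correct and uses essentially the same idea as the paper's: an explicit injection $\mathcal{P}_{A^{m,n}}(\mathbf{b}) \hookrightarrow \mathcal{P}_{A^{m,n}}(\mathbf{a})$ obtained by incrementing the coordinates of the vector partition that correspond to the standard-basis-vector columns of $A^{m,n}$, licensed by Property~(v). The paper performs the shift in a single step, adding $(a_i - b_i)$ to each relevant coordinate at once, while you decompose it into a chain of elementary increments by $\mathbf{e}_i$; this is only a presentational difference.
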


\begin{proof}
Each of the standard basis vectors $\BF{e}_1, \dots, \BF{e}_{m+n-2}$ is a column of $A^{m,n}$. Without loss of generality assume that columns $1, \dots, m+n-2$ are the standard basis vectors $\BF{e}_1, \dots, \BF{e}_{m+n-2}$ in the same order. It follows that any vector partition $\BF{x} \in S_{A^{m,n}}(\BF{b})$ can be mapped to a unique vector partition $\BF{x'} \in S_{A^{m,n}}(\BF{a})$ by taking $\BF{x'}_i := \BF{x}_i + (a_i-b_i)\BF{e}_i$ for each $1 \leq i \leq m + n - 2$. This forms an injective map from~$\mathcal{P}_A(\BF{b})$ to~$\mathcal{P}_A(\BF{a})$.
\end{proof}

\begin{lem} \label{lem:poset-structure}
Let $m,n$ be positive integers. Let $\sigma_1, \sigma_2 \in \mathfrak{S}_{mn}$ such that
\[
(l_s(\lambda; \sigma_1), l_t(\lambda; \sigma_1)) \succeq (l_s(\lambda; \sigma_2), l_t(\lambda; \sigma_2)) 
\]
for all partitions $\lambda$ with $\ell(\lambda) \leq mn$. Then 
\[
p_{A^{m,n}}\biggr(\BF{b}^{m,n}(\lambda, \mu, \nu; \sigma_1)\biggr) \leq p_{A^{m,n}}\biggr(\BF{b}^{m,n}(\lambda, \mu, \nu; \sigma_2)\biggr)
\]
for all partitions $\lambda, \mu, \nu$ with $\ell(\lambda) \leq mn, \ell(\mu) \leq m, \ell(\nu) \leq n$.
\end{lem}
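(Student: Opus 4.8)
The plan is to reduce Lemma~\ref{lem:poset-structure} directly to Lemma~\ref{lem:vpf-dominance} by showing that the hypothesis on the $l$-functions transfers to a dominance relation between the two vector partition function inputs. First I would write out the vector partition function input explicitly: by definition
\[
\BF{b}^{m,n}(\lambda, \mu, \nu; \sigma) = \bigl(r_s(\mu, \nu) + \alpha - l_s(\lambda; \sigma),\ r_t(\mu, \nu) + \beta - l_t(\lambda; \sigma)\bigr),
\]
and observe that the first two summands $r_s(\mu,\nu) + \alpha$ and $r_t(\mu,\nu) + \beta$ do not depend on $\sigma$ at all — they are functions of $\mu, \nu$ (and the fixed data $m,n$) only. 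Hence
\[
\BF{b}^{m,n}(\lambda, \mu, \nu; \sigma_2) - \BF{b}^{m,n}(\lambda, \mu, \nu; \sigma_1) = \bigl(l_s(\lambda;\sigma_1) - l_s(\lambda;\sigma_2),\ l_t(\lambda;\sigma_1) - l_t(\lambda;\sigma_2)\bigr),
\]
which is coordinatewise non-negative precisely because the hypothesis states $(l_s(\lambda;\sigma_1), l_t(\lambda;\sigma_1)) \succeq (l_s(\lambda;\sigma_2), l_t(\lambda;\sigma_2))$ for the given $\lambda$. Therefore $\BF{b}^{m,n}(\lambda, \mu, \nu; \sigma_2) \succeq \BF{b}^{m,n}(\lambda, \mu, \nu; \sigma_1)$.

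Next I would invoke Lemma~\ref{lem:vpf-dominance}: applying it with $\BF{a} := \BF{b}^{m,n}(\lambda, \mu, \nu; \sigma_2)$ and $\BF{b} := \BF{b}^{m,n}(\lambda, \mu, \nu; \sigma_1)$ gives immediately
\[
p_{A^{m,n}}\bigl(\BF{b}^{m,n}(\lambda, \mu, \nu; \sigma_2)\bigr) \geq p_{A^{m,n}}\bigl(\BF{b}^{m,n}(\lambda, \mu, \nu; \sigma_1)\bigr),
\]
which is exactly the claimed inequality. One small wrinkle: Lemma~\ref{lem:vpf-dominance} as stated applies to any non-negative integer vectors in the domain, so I should note that the relevant inputs are integer vectors (the entries of $A^{m,n}$, $\lambda, \mu, \nu$ and the constant vectors are integral), and that $p_{A^{m,n}}$ extends by the convention $p_A(\BF{b}) = 0$ when $\BF{b}$ has a negative coordinate or otherwise admits no vector partition — so the inequality is still meaningful and correct even when one or both inputs lie outside $\mathbb{Z}_{\geq 0}^{m+n-2}$, since $p_{A^{m,n}}$ is monotone in the dominance order across the whole lattice (if $\BF{a} \succeq \BF{b}$ and $\BF{b}$ has a negative entry there is nothing to prove, and if $\BF{a}$ has a negative entry then so does $\BF{b}$).

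There is essentially no main obstacle here — the lemma is a formal consequence of the $\sigma$-independence of the $r+\alpha$, $r+\beta$ part of the input together with the monotonicity already established in Lemma~\ref{lem:vpf-dominance}. The only thing requiring a line of care is the bookkeeping of which pieces of $\BF{b}^{m,n}$ depend on $\sigma$, which is immediate from the definitions of $l_s, l_t$ (they are the only $\sigma$-dependent terms, built from $\lambda_{\sigma(i)}$) versus $r_s, r_t, \alpha, \beta$ (which involve no permutation). I would also remark that this lemma is what lets one organize the $(mn)!$ terms of Eq.~\eqref{eq:vpf-to-kron} according to a partial order on $\mathfrak{S}_{mn}$ induced by dominance of the $l$-vectors, which is presumably how it feeds into the proof of Theorem~\ref{theo:gen-bravyi}.
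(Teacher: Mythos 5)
Your proposal is correct and follows essentially the same route as the paper: both arguments observe that the $\sigma$-independent parts $r_s(\mu,\nu)+\alpha$ and $r_t(\mu,\nu)+\beta$ cancel, so the hypothesis on the $l$-vectors reverses under the minus sign to give $\BF{b}^{m,n}(\lambda,\mu,\nu;\sigma_2) \succeq \BF{b}^{m,n}(\lambda,\mu,\nu;\sigma_1)$, and then both invoke Lemma~\ref{lem:vpf-dominance}. Your added remark about inputs with negative coordinates (where $p_{A^{m,n}}$ vanishes and monotonicity holds trivially) is a small point the paper leaves implicit, but it does not change the argument.
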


\begin{proof}
    Multiplication by $-1$ reverses domination. The domination of one vector over another is preserved if we subtract the same vector from both sides, and if we add a positive vector to the larger one. Thus, for any partitions $\lambda, \mu, \nu$ with $\ell(\lambda) \leq mn, \ell(\mu) \leq m, \ell(\nu) \leq n$, we find that
\begin{align*}
    \BF{b}^{m,n}(\lambda, \mu, \nu; \sigma_2) &= \biggr(r_s(\mu, \nu), r_t(\mu, \nu)) + (\alpha, \beta) - (l_s(\lambda; \sigma_2), l_t(\lambda; \sigma_2)\biggr) \\
    &\succeq\,%
\biggr(r_s(\mu, \nu),  r_t(\mu, \nu)) + (\alpha, \beta) -(l_s(\lambda; \sigma_1), l_t(\lambda; \sigma_1)\biggr) \\ &= \BF{b}^{m,n}(\lambda, \mu, \nu; \sigma_1).
\end{align*}
Then by Lemma \ref{lem:vpf-dominance} we have that $p_{A^{m,n}}\biggr(\BF{b}^{m,n}(\lambda, \mu, \nu; \sigma_2) \biggr) \geq p_{A^{m,n}}\biggr(\BF{b}^{m,n}(\lambda, \mu, \nu; \sigma_1) \biggr)$ as required. 
\end{proof}

The previous lemma induces a poset structure on $\mathfrak{S}_{mn}$ via the relation $\sigma_2 \geq \sigma_1$ if and only if
\[
(l_s(\lambda; \sigma_1), l_t(\lambda; \sigma_1)) \succeq (l_s(\lambda; \sigma_2), l_t(\lambda; \sigma_2)) 
\]
for all partitions $\lambda$ with $\ell(\lambda) \leq mn$. Figure \ref{fig:poset-22} illustrates the poset for the $m=n=2$ case, showing only the permutations associated to the 7 alternant terms which contribute to the Kronecker coefficient. 
The poset in the figure is the \em dependency digraph for the monomials in $P_{\lambda}$\em\ given in \cite[Figure 4]{MiRoSu21}. However, there the poset is computed by comparing the contributions of the alternant terms as opposed to the vector partition function inputs. Our approach allows us to compute the posets for larger $m,n$ when comparing the contributions is infeasible (either due to the large number of chambers or the difficulty of computing the vector partition function as a piecewise quasi-polynomial). 

In the following lemma we show that the identity permutation is a maximal element of the poset for any positive integers $m,n$ (in fact it is unique, and thus the maximal element).
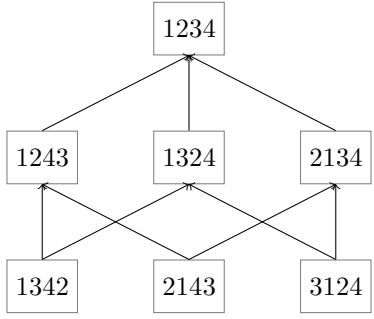
\begin{figure}
    \centering
    
    \begin{tikzpicture}[
myNODE/.style={rectangle,  minimum size=7mm, draw=gray},
]
\node[myNODE]      (ID)  {1234};
\node[myNODE]      (1324) [below=of ID] {1324};
\node[myNODE]      (2134) [right=of 1324] {2134};
\node[myNODE]      (1243) [left=of 1324] {1243};
\node[myNODE]      (3124) [below=of 2134] {3124};
\node[myNODE]      (2143) [below=of 1324] {2143};
\node[myNODE]      (1342) [below=of 1243] {1342};
\draw[->] (3124.north) -- (2134.south);
\draw[->] (3124.north) -- (1324.south);
\draw[->] (2143.north) -- (2134.south);
\draw[->] (2143.north) -- (1243.south);
\draw[->] (1342.north) -- (1243.south);
\draw[->] (1342.north) -- (1324.south);
\draw[->] (2134.north) -- (ID.south);
\draw[->] (1324.north) -- (ID.south);
\draw[->] (1243.north) -- (ID.south);
\end{tikzpicture}

    \caption{The poset of contributing alternant terms in the $m=n=2$ case. Each alternant term is given by its permutation in one line notation.}
    \label{fig:poset-22}
\end{figure}

\begin{lem} \label{lem:domination}
    For all $\sigma \in \mathfrak{S}_{mn}$ and partitions $\lambda$ of length at most $mn$,
    \[(l_s(\lambda; \sigma), l_t(\lambda; \sigma)) \succeq (l_s(\lambda; \id), l_t(\lambda; \id)).\]
\end{lem}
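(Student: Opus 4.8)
The plan is to establish the dominance coordinate-by-coordinate by exploiting Proposition~\ref{rem:var-change}, which says that for each variable $u \in \{1, s_0, \dots, s_{m-1}, t_1, \dots, t_{n-2}\}$ the degrees $\deg_u(x_iy_j)$, once the ordered list $1, x_1, \dots, x_{m-1}, y_1, \dots, y_{n-1}, x_1y_1, \dots, x_{m-1}y_{n-1}$ is formed, are monotonically non-decreasing along that list. Recall that the term of the alternant $a_{\lambda+\delta_{mn}}(1, s_0, \dots, s_{m-1}, t_1, \dots, t_{n-2})$ associated to $\sigma \in \mathfrak{S}_{mn}$ is $\mathbf{s}^{l_s(\lambda;\sigma)}\mathbf{t}^{l_t(\lambda;\sigma)}$, where the exponent in a given variable $u$ is $\sum_{k=1}^{mn} (\lambda_{\sigma(k)} + \delta_k)\,\deg_u(z_k)$ with $z_k$ the $k$-th entry of the ordered variable list and $\delta_k$ the $k$-th staircase part. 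So the statement $(l_s(\lambda;\sigma), l_t(\lambda;\sigma)) \succeq (l_s(\lambda;\id), l_t(\lambda;\id))$ amounts to showing, for each such $u$, that
\[
\sum_{k=1}^{mn} \lambda_{\sigma(k)}\, c_k \;\geq\; \sum_{k=1}^{mn} \lambda_{k}\, c_k,
\]
where $c_k := \deg_u(z_k) \geq 0$ and, crucially, $c_1 \leq c_2 \leq \dots \leq c_{mn}$ by Proposition~\ref{rem:var-change}. (The constant staircase contribution $\sum_k \delta_k c_k$ is identical on both sides and cancels.)

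The key step is then a purely combinatorial rearrangement fact: if $\lambda_1 \geq \lambda_2 \geq \dots \geq \lambda_{mn} \geq 0$ and $c_1 \leq c_2 \leq \dots \leq c_{mn}$, then for every permutation $\sigma$ we have $\sum_k \lambda_{\sigma(k)} c_k \geq \sum_k \lambda_k c_k$; equivalently, pairing the decreasing sequence $\lambda$ against the increasing sequence $c$ (which is what $\sigma = \id$ does here, since the list $z_k$ is ordered so that $c$ is non-decreasing) minimizes the dot product over all permutations. This is exactly the rearrangement inequality. I would state it in this one-line form and either cite it or give the standard two-line swap argument: if $\sigma$ is not the identity on the relevant orbit, there exist $k < \ell$ with $\sigma(k) > \sigma(\ell)$, i.e.\ $\lambda_{\sigma(k)} \leq \lambda_{\sigma(\ell)}$ while $c_k \leq c_\ell$; transposing the values $\sigma(k), \sigma(\ell)$ changes the sum by $(\lambda_{\sigma(\ell)} - \lambda_{\sigma(k)})(c_\ell - c_k) \geq 0$, so the sum does not increase, and iterating sorts $\sigma$ to $\id$. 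Hence $\sum_k \lambda_{\sigma(k)} c_k \geq \sum_k \lambda_k c_k$ for every $\sigma$, which is the desired inequality in coordinate $u$.

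Running this over all coordinates $u \in \{1, s_0, \dots, s_{m-1}, t_1, \dots, t_{n-2}\}$ — the $s$-variables giving the $l_s$ components and the $t$-variables giving the $l_t$ components — yields $(l_s(\lambda;\sigma), l_t(\lambda;\sigma)) \succeq (l_s(\lambda;\id), l_t(\lambda;\id))$ for all $\sigma$, completing the proof. The only real subtlety — the place I would be most careful — is bookkeeping: making sure that the exponent of $u$ in the $\sigma$-term of the alternant really is $\sum_k (\lambda_{\sigma(k)} + \delta_k)\deg_u(z_k)$ with $\sigma$ permuting the $\lambda + \delta$ labels against the \emph{fixed} ordered variable list (so that it is the $\lambda$-part, not the variable ordering, that $\sigma$ shuffles), and that the ordering of $z_k$ used in Proposition~\ref{rem:var-change} is the same ordering implicitly used in the definitions of $l_s, l_t$. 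Once that alignment is confirmed, the rest is the rearrangement inequality applied componentwise, and there is no genuine obstacle. As a remark one can note this also shows $\id$ is the unique minimal-exponent (hence maximal-in-the-poset) element whenever $\lambda$ has strictly decreasing parts, matching the claim preceding the lemma.
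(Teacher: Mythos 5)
Your proposal is correct and takes essentially the same route as the paper: the paper likewise works coordinate by coordinate, combining the monotonicity of $\deg_u$ from Proposition~\ref{rem:var-change} with the rearrangement principle (which it asserts where you spell out the swap argument). The one bookkeeping point you flagged does need fixing: the $\sigma$-term of the alternant carries exponent $\sum_k \bigl(\lambda_{\sigma(k)} + \delta_{\sigma(k)}\bigr)\deg_u(z_k)$, not $\sum_k \bigl(\lambda_{\sigma(k)} + \delta_k\bigr)\deg_u(z_k)$, so the staircase does not split off as a $\sigma$-independent constant; instead apply the rearrangement inequality directly to the strictly decreasing sequence $\lambda_k + mn - k$, exactly as the paper does, and the argument goes through unchanged.
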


\begin{proof}
    The alternant $a_{\lambda + \delta_{mn}}$ is the determinant of the matrix $(z_i^{\lambda_j})_{1 \leq i,j \leq mn}$ where $z_i$ is the $i$\textsuperscript{th} variable in $XY$. The $k$\textsuperscript{th} coordinate of $(l_s(\lambda; \sigma), l_t(\lambda; \sigma))$ is
    \begin{equation} \label{eq:k-coordinate}
        (l_s(\lambda; \sigma), l_t(\lambda; \sigma))_k = \sum_{i = 1}^{mn} \left(\lambda_i + mn - i \right)\deg_u(z_{\sigma^{-1}(i)})
    \end{equation}
    where $u$ is the $k$\textsuperscript{th} element of $(1, s_0, \dots, s_{m-1}, t_1, \dots, t_{n-2})$. Since $(\lambda_1 + mn - 1, \lambda_2 + mn - 2, \dots, \lambda_{mn})$ is a monotonically decreasing sequence, and $\deg_u$ monotonically increasing over $(1, x_1, \dots, x_{m-1}, y_1, \dots, y_{n-1}, \\ x_1y_1, \dots, x_my_n)$, the above expression is minimized for the term obtained by the change of variables from
    \[
    1^{\lambda_1 + mn - 1}x_1^{\lambda_2 + mn - 2}\dots\ (x_my_n)^{\lambda_{mn}}
    \]
    corresponding to the identity permutation. 
\end{proof}

Combining the previous two lemmas yields the following result relating the atomic Kronecker coefficient $\tilde{g}^{m,n}_{\lambda, \mu, \nu}$ with the Kronecker coefficient $g_{\lambda, \mu, \nu}$, from which vanishing conditions (given in Theorem \ref{theo:gen-bravyi}) can be derived.

\begin{lem} \label{lem:atomic-vanish}
    Let $\lambda, \mu, \nu$ be partitions with $\ell(\lambda) \leq mn, \ell(\mu) \leq m, \ell(\nu) \leq n$ for some positive integers $m,n$.
    If $\tilde{g}^{m,n}_{\lambda, \mu, \nu} = 0$, then $g_{\lambda, \mu, \nu} = 0$.
\end{lem}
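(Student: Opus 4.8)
The plan is to show that the atomic Kronecker coefficient is an upper bound for the absolute value of every individual alternant contribution, and in fact dominates the whole signed sum from above. The key observation is that Lemma~\ref{lem:domination} tells us the identity permutation is a maximal element in the domination order on the vectors $(l_s(\lambda;\sigma), l_t(\lambda;\sigma))$, and Lemma~\ref{lem:poset-structure} then converts this into the statement that the vector partition function evaluation at the identity permutation is the \emph{largest} among all $\sigma \in \mathfrak{S}_{mn}$; that is,
\[
p_{A^{m,n}}\biggl(\BF{b}^{m,n}(\lambda, \mu, \nu; \sigma)\biggr) \leq p_{A^{m,n}}\biggl(\BF{b}^{m,n}(\lambda, \mu, \nu; \id)\biggr) = \tilde{g}^{m,n}_{\lambda, \mu, \nu}
\]
for every $\sigma$.

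First I would invoke Lemma~\ref{lem:domination} with $\sigma_1 = \sigma$ (arbitrary) and $\sigma_2 = \id$ to obtain $(l_s(\lambda;\sigma), l_t(\lambda;\sigma)) \succeq (l_s(\lambda;\id), l_t(\lambda;\id))$ for all partitions $\lambda$ of length at most $mn$. This is precisely the hypothesis of Lemma~\ref{lem:poset-structure} (taking $\sigma_1 = \sigma$, $\sigma_2 = \id$), so that lemma yields $p_{A^{m,n}}(\BF{b}^{m,n}(\lambda,\mu,\nu;\sigma)) \leq p_{A^{m,n}}(\BF{b}^{m,n}(\lambda,\mu,\nu;\id)) = \tilde{g}^{m,n}_{\lambda,\mu,\nu}$. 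Now suppose $\tilde{g}^{m,n}_{\lambda,\mu,\nu} = 0$. Since every term $p_{A^{m,n}}(\BF{b}^{m,n}(\lambda,\mu,\nu;\sigma))$ is a nonnegative integer (vector partition functions are nonnegative) and each is bounded above by $0$, every such term must equal $0$. Therefore every summand $\operatorname{sgn}(\sigma)\, p_{A^{m,n}}(\BF{b}^{m,n}(\lambda,\mu,\nu;\sigma))$ in Eq.~\eqref{eq:vpf-to-kron} vanishes, and hence $g_{\lambda,\mu,\nu} = 0$ by Theorem~\ref{theo:vpf-to-kron}.

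There is essentially no obstacle here — the statement is a direct corollary of the three preceding lemmas, and the only point requiring a moment's care is that we are allowed to conclude that a sum of \emph{signed} terms is zero: this follows not from cancellation but from the stronger fact that each unsigned term is individually zero, which in turn uses nonnegativity of $p_{A^{m,n}}$ together with the upper bound of $0$. I would state this cleanly in two sentences rather than belaboring it. The genuinely substantive work was done upstream in Lemma~\ref{lem:domination}, whose proof leverages the monotonicity of $\deg_u$ established in Proposition~\ref{rem:var-change} against the monotone decreasing sequence $(\lambda_i + mn - i)_i$.
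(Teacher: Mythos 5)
Your proof is correct and follows exactly the paper's own argument: apply Lemma~\ref{lem:domination} together with Lemma~\ref{lem:poset-structure} (with $\sigma_1 = \sigma$, $\sigma_2 = \id$) to bound every term $p_{A^{m,n}}(\BF{b}^{m,n}(\lambda,\mu,\nu;\sigma))$ above by the atomic coefficient, then use nonnegativity of the vector partition function to conclude each term vanishes, so the signed sum in Eq.~\eqref{eq:vpf-to-kron} is zero. No gaps; the orientation of the domination hypotheses is handled correctly.
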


\begin{proof}

If $\tilde{g}^{m,n}_{\lambda, \mu, \nu}=0$ then for any $\sigma\in \mathfrak{S}_{mn}, $
\begin{eqnarray*}0=\tilde{g}^{m,n}_{\lambda, \mu, \nu} &=& p_{A^{m,n}}\biggr(\BF{b}^{m,n}(\lambda, \mu, \nu; \id \biggr)\quad\text{by Eq.~\eqref{eq:atomic}}, \\
&\geq& p_{A^{m,n}}\biggr(\BF{b}^{m,n}(\lambda, \mu, \nu; \sigma \biggr) \quad\text{by Lemmas~\ref{lem:poset-structure} and \ref{lem:domination}}, \\
&\geq& 0,  \\
\end{eqnarray*}
and thus $p_{A^{m,n}}\biggr(\BF{b}^{m,n}(\lambda, \mu, \nu; \sigma \biggr) \ = 0$. Since it is true for all $\sigma$, all the terms in the sum in Eq.~\eqref{eq:vpf-to-kron} vanish, and so $g_{\lambda, \mu, \nu} = 0$.
\end{proof}

Since $\tilde{g}^{m,n}_{\lambda, \mu, \nu}$ is given by a single vector partition function evaluation $p_{A^{m,n}}(\mathbf{b})$, we know that it is~$0$ exactly when $\mathbf{b}$ is not in the cone generated by the columns of $A^{m,n}$. This occurs if and only if $b_i < 0$ for some $1 \leq i \leq m+n-2$. Since $\BF{b} = (r_s(\mu, \nu) + \alpha - l_s(\lambda, \id), r_t(\mu, \nu) + \beta - l_t(\lambda, \id))$ for the atomic Kronecker coefficient $\tilde{g}^{m,n}_{\lambda, \mu, \nu}$, we get a set of vanishing conditions for the Kronecker coefficient $g_{\lambda, \mu, \nu}$. We express the conditions using the contrapositive (i.e. we give conditions imposed on $\lambda, \mu, \nu$ if the Kronecker coefficient is non-zero) since the set of $\lambda, \mu, \nu$ satisfying them forms a cone. 

\begin{theo} \label{theo:gen-bravyi} 
    Let $m,n$ be positive integers and $\lambda, \mu, \nu$ be partitions with $\ell(\lambda) \leq mn, \ell(\mu) \leq m, \ell(\nu) \leq n$. If $g_{\lambda, \mu, \nu} \neq 0$ then each of the following inequalities hold:
    \begin{equation}
        \sum_{k=1}^{m} \lambda_k \geq \nu_1;
    \end{equation}
   For all $a$ satisfying $1 \leq a \leq m-1$:
    \begin{equation}
        \sum_{k=1}^{a} \lambda_k - \sum_{k=m+n}^{m + (a+1)(n-1)} \lambda_k \geq \nu_1 - \sum_{k=a+1}^{m} \mu_k
    \end{equation}
 For all $b$ satisfying $1 \leq b \leq n-2$:
{\small\begin{equation} 
m\lambda_1 + \sum_{k=2}^{m} (m-k+1)\lambda_k + \sum_{k=m+1}^{m+b} \lambda_k - \sum_{i=1}^{m-1}\sum_{j=1}^{b}(i-1) \lambda_{m+i(n-1)+j} - \sum_{i=1}^{m-1}\sum_{j=b+1}^{n-1}i\lambda_{m+i(n-1) + j} \geq m\nu_1 + \sum_{k=2}^{b+1} \nu_k - \sum_{k=2}^{m}(k-1)\mu_k.
 \end{equation}}
\end{theo}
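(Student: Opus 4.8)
The plan is to apply Lemma~\ref{lem:atomic-vanish}: since the Kronecker coefficient $g_{\lambda,\mu,\nu}$ is nonzero, so is the atomic coefficient $\tilde g^{m,n}_{\lambda,\mu,\nu} = p_{A^{m,n}}(\BF{b}^{m,n}(\lambda,\mu,\nu;\id))$. Because $\tilde g^{m,n}_{\lambda,\mu,\nu}$ is a single vector partition function evaluation, it is nonzero only if its input $\BF{b} = (r_s(\mu,\nu)+\alpha-l_s(\lambda;\id),\ r_t(\mu,\nu)+\beta-l_t(\lambda;\id))$ lies in the cone generated by the columns of $A^{m,n}$. Using Property (v) — each standard basis vector $\BF{e}_1,\dots,\BF{e}_{m+n-2}$ is a column of $A^{m,n}$ and all entries are nonnegative (Property (i)) — the cone contains every coordinate-nonnegative vector; conversely, since all columns are nonnegative, any $\BF{b}$ in the cone is coordinate-nonnegative. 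Hence $\tilde g^{m,n}_{\lambda,\mu,\nu}\neq 0$ forces $b_i \geq 0$ for every $1 \leq i \leq m+n-2$, i.e. $r_s(\mu,\nu)_u + \alpha_u - l_s(\lambda;\id)_u \geq 0$ for $0 \leq u \leq m-1$ and $r_t(\mu,\nu)_v + \beta_v - l_t(\lambda;\id)_v \geq 0$ for $1 \leq v \leq n-2$.

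The remaining work is bookkeeping: substitute the explicit formulas for $r_s,\alpha,l_s(\cdot;\id)$ and $r_t,\beta,l_t(\cdot;\id)$ given after Theorem~\ref{theo:vpf-to-kron} into these $m+n-2$ inequalities and simplify. The paper already records that the constant terms of $\BF{b}^{m,n}(\lambda,\mu,\nu;\id)$ all vanish, so each inequality is a purely linear form in the parts of $\lambda,\mu,\nu$; the task is to collect coefficients. The $u=0$ coordinate should give $\sum_{k=1}^m \lambda_k \geq \nu_1$ after recognizing $r_s(\mu,\nu)_0 = |\nu|-\nu_1+\binom{n-1}{2}$ and $l_s(\lambda;\id)_0 = \sum_{i=m+1}^{mn}\lambda_i + \tfrac12(nm-m-1)(n-1)m$, with $\alpha_0$ cancelling the binomial and staircase constants and $\sum_{i=m+1}^{mn}\lambda_i$ being replaced via $|\lambda|\geq$ the tail (here one uses $\sum_{k=1}^m \lambda_k = |\lambda| - \sum_{i=m+1}^{mn}\lambda_i$, combined with the trivial $|\lambda|=|\mu|$ or the relation $|\nu|-\nu_1 = \sum_{k\geq 2}\nu_k$). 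For $1\leq a\leq m-1$ the $u=a$ coordinate produces the second family, where the doubled-weight tail $2\sum_{i=m+a(n-1)+1}^{mn}\lambda_i$ in $l_s(\lambda;\id)_a$ accounts for the subtracted block $\sum_{k=m+n}^{m+(a+1)(n-1)}\lambda_k$ after reindexing. The $t$-coordinates similarly yield the third family, with the weights $m,\ m-1,\ i+m,\ i+m-1$ appearing in $l_t$ matching the coefficients on the left-hand side.

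The main obstacle — really the only nontrivial point — is the index-arithmetic in the $l_s$ and $l_t$ formulas: the block boundaries $m+u(n-1)$, $m+i(n-1)+j$ and the mixed sums in $l_t(\lambda;\id)_v$ must be matched exactly against the ranges in the claimed inequalities, and one must verify that all additive constants (the various $\binom{\cdot}{\cdot}$, the polynomial tails in $\alpha,\beta,l_s(\cdot;\id),l_t(\cdot;\id)$) cancel precisely. This is guaranteed in principle by the stated fact that $\BF{b}^{m,n}(\lambda,\mu,\nu;\id)$ has zero constant term, so the verification reduces to checking the linear coefficients agree; one can also sanity-check against the $m=n=2$ instance in~\cite{MiRoSu21} and against the known Murnaghan--Littlewood inequality $\overline\lambda \leq \overline\mu + \overline\nu$, of which the first inequality $\sum_{k=1}^m\lambda_k \geq \nu_1$ is the $m=1$-type shadow. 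I would present the $u=0$ case in full detail, then indicate that the $u=a$ and $v=b$ cases follow by the identical substitution, leaving the coefficient comparison to the reader.
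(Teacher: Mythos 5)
Your proposal mirrors the paper's argument exactly: the paper also deduces the theorem from the contrapositive of Lemma~\ref{lem:atomic-vanish} together with the observation (implicitly using Properties (i) and (v) of $A^{m,n}$, which you spell out more carefully) that $\tilde g^{m,n}_{\lambda,\mu,\nu}\neq 0$ if and only if every coordinate of $\mathbf{b}^{m,n}(\lambda,\mu,\nu;\id)$ is nonnegative, and then reads off the $m+n-2$ inequalities from the explicit formulas; the paper likewise leaves the coordinate-by-coordinate simplification implicit, so your level of detail matches. Two small slips in your side remarks: when eliminating $|\nu|-\nu_1$ you need $|\nu|=|\lambda|$ (not $|\lambda|=|\mu|$), and the Murnaghan--Littlewood inequality $\overline\lambda\leq\overline\mu+\overline\nu$ is not the same as $\sum_{k=1}^m\lambda_k\geq\nu_1$, so it does not serve as a direct cross-check of the first inequality.
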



\begin{rem}
When $m=n=2$, Theorem \ref{theo:gen-bravyi} reduces to vanishing conditions given by Bravyi in \cite{Bra04}. This case was worked out explicitly in \cite[Proposition 5]{MiRoSu21}. 
\end{rem}
An inequality $n \cdot x \leq 0$ is \emph{essential} for a cone $\tau$ if $\{x : n \cdot x = 0\} \cap \tau$ is a facet of $\tau$, and each $p \in \tau$ satisfies the inequality ($n \cdot p \leq 0$ for all $p \in \tau$). 

\begin{rem}
Klyachko~\cite{Kl04} gives the full list of $41$ essential inequalities in the $m=2, n=3$ case. In this case, none of our inequalities appear on Klyachko's list. Thus, while our inequalities are easy to compute and use practically, regrettably none are essential inequalities for the cone $PKron_{2,3,6}$. Thus, one should not expect, for general $m,n$, that the inequalities given by Theorem \ref{theo:gen-bravyi} are essential.
\end{rem}

Ressayre determined two sets of vanishing conditions for the Kronecker coefficients for any lengths $l,m,n$ which are essential,~\cite[Theorems 1 \& 2]{Re19}. 
\begin{theo}[Ressayre~\cite{Re19}]
    Let $e, f$ be two positive integers, and let $\lambda, \mu, \nu$ be partitions of $N$ with
    \begin{equation}\label{eq:Ress}
    l(\mu) \leq e+1, \ l(\nu) \leq f+1,\ l(\lambda) \leq e + f + 1
    \end{equation}
    If $g_{\lambda, \mu, \nu} \neq 0$, then 
    \[
    N + \lambda_1 + \lambda_{e+j} \leq \mu_1 + \nu_1 + \nu_j 
    \]
    for all $2 \leq j \leq f+1$.
\end{theo}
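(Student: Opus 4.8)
This inequality is due to Ressayre; I would prove it as follows. First, make the statement homogeneous: using $N=|\lambda|=|\mu|=|\nu|$ and $N-\gamma_1=|\overline{\gamma}|$ for any partition $\gamma$, the asserted inequality $N+\lambda_1+\lambda_{e+j}\le\mu_1+\nu_1+\nu_j$ is equivalent to
\[
|\overline{\mu}|+|\overline{\nu}|+\lambda_{e+j}\ \le\ |\overline{\lambda}|+\nu_j\qquad(2\le j\le f+1),
\]
i.e.\ to $|\overline{\mu}|+\sum_{i\ne j-1}\overline{\nu}_i\le\sum_{i\ne e+j-1}\overline{\lambda}_i$, where $\ell(\overline{\mu})\le e$, $\ell(\overline{\nu})\le f$, $\ell(\overline{\lambda})\le e+f$; the inhomogeneous ``$+N$'' is exactly the term that reappears when one writes an $SL(V)\times SL(W)$ inequality in terms of $GL$ highest weights (equivalently, on passing from $\lambda$ to $\lambda+\delta_{mn}$). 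It thus suffices to prove the homogeneous form.

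\emph{Why the framework of this paper does not settle it directly.} With $(m,n)=(e+1,f+1)$ one might hope that whenever the inequality fails, some coordinate of $\BF{b}^{m,n}(\lambda,\mu,\nu;\id)$ is forced negative; then Lemmas~\ref{lem:vpf-dominance}, \ref{lem:poset-structure} and \ref{lem:domination} would kill every term of~\eqref{eq:vpf-to-kron} and give $g_{\lambda,\mu,\nu}=0$. But the coordinates of $\BF{b}^{m,n}(\lambda,\mu,\nu;\id)$ produce precisely the inequalities of Theorem~\ref{theo:gen-bravyi}, none of which is essential (see the Remark after it), whereas Ressayre's cut out facets of $PKron$. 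Hence the identity term alone cannot see this inequality, and a purely VPF proof would have to exploit the cancellation among the non-identity terms of~\eqref{eq:vpf-to-kron} — the phenomenon this paper leaves open — or else apply Theorem~\ref{theo:gen-bravyi} to the $\mathfrak{S}_3$-permuted triples and hope one of the resulting inequalities coincides with Ressayre's. Lacking this, I would fall back on geometric invariant theory.

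\emph{GIT plan.} Set $V=\mathbb{C}^{e+1}$, $W=\mathbb{C}^{f+1}$, $G=GL(V)\times GL(W)$ and embed $G\hookrightarrow\hat G=GL(V\otimes W)$ by the tensor action. Since $\ell(\lambda)\le e+f+1\le(e+1)(f+1)$, we have $S^\lambda(V\otimes W)=\bigoplus_{\mu,\nu}g_{\lambda,\mu,\nu}\,S^\mu V\otimes S^\nu W$ as $G$-modules (this is~\eqref{eq:schur} read as an identity of $GL_m\times GL_n$-characters), so $g_{\lambda,\mu,\nu}=\dim\operatorname{Hom}_G\!\bigl(S^\mu V\boxtimes S^\nu W,\ S^\lambda(V\otimes W)\bigr)$ and a nonzero $g_{\lambda,\mu,\nu}$ forces $(\lambda,\mu,\nu)$ into the branching cone of $G\subset\hat G$. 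By Berenstein--Sjamaar and its refinements (Belkale--Kumar, Ressayre), this cone is cut out by linear inequalities indexed by \emph{well-covering pairs} $(C,\tau)$: $\tau$ is a dominant one-parameter subgroup of a maximal torus of $G$, $C$ an irreducible component of the $\tau$-fixed locus of $\hat G/\hat B$ satisfying a transversality (dominance / Levi-movability) condition, and the associated inequality pairs $\tau$ — acting on $V\otimes W$ — with the $\hat G$-weight $\lambda+\delta_{mn}$ read along $C$, on one side, and with the $G$-weight $(\mu,\nu)$, on the other. For each $j$ I would search for a $\tau$ that is rank-one on $GL(V)$ (forced, since $\mu$ must enter only through $\mu_1$) and jumps on $GL(W)$ only in positions $1$ and $j$ (so the $G$-side is $\mu_1+\nu_1+\nu_j$), together with a Schubert component $C$ adapted to the resulting three-block eigenspace decomposition of $V\otimes W$ along which the $\hat G$-side collapses, using $\ell(\lambda)\le e+f+1$, to $\lambda_1+\lambda_{e+j}$ up to the standard $\delta_{mn}$-shift (which supplies the $N$).

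\emph{Main obstacle.} Once $C$ is fixed the weight bookkeeping in the last step is mechanical. The real difficulty is verifying that $(C,\tau)$ is genuinely well-covering — equivalently, that the relevant triple intersection of Schubert varieties in $(\hat G/\hat B)^\tau$ is a reduced point (or has exactly the expected dimension, so the dominance condition holds) — and doing so uniformly over $2\le j\le f+1$. This is precisely what upgrades the inequality from ``valid for the examples one checks'' to ``valid for every nonzero $g_{\lambda,\mu,\nu}$, and essential'', and it is the intersection-theoretic heart of Ressayre's argument in~\cite{Re19}; I would expect essentially all of the work to be there.
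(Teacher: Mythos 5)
There is no proof to match here: in the paper this statement is quoted verbatim from Ressayre~\cite{Re19} and is not proved --- indeed, immediately after stating it the paper exhibits a counterexample to the statement as printed ($e=1$, $f=3$, $j=4$, $\lambda=(1,1,1,1,0)$, $\mu=\nu=(2,2)$: the inequality $N+\lambda_1+\lambda_5\le\mu_1+\nu_1+\nu_4$ reads $5\le 4$, yet $g_{\lambda,\mu,\nu}=1$), and remarks that ``there is likely a smaller error in these conditions.'' So the first problem with your proposal is that you are setting out to prove a statement that, in the form given, is false; any argument that succeeded would have to contradict that example, and the correct first step is to locate and fix the discrepancy with Ressayre's original formulation (hypotheses, index ranges, or the direction of the inequality) before any machinery is deployed. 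Your homogenization step is fine as algebra, but it cannot rescue a false inequality.

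The second problem is that, even granting a corrected statement, what you have written is a research plan rather than a proof. Your diagnosis of why the paper's VPF framework cannot reach this inequality is accurate and well argued (the identity-term coordinates only reproduce Theorem~\ref{theo:gen-bravyi}, none of which is essential). But the GIT route then defers the entire content: you do not exhibit the one-parameter subgroup $\tau$ or the Schubert component $C$ for any $j$, and you explicitly concede that verifying the well-covering/Levi-movability condition --- the step that makes the inequality valid for all triples rather than for checked examples --- is ``the intersection-theoretic heart of Ressayre's argument'' and that you ``would expect essentially all of the work to be there.'' That is a genuine gap, not a proof. In the context of this paper the appropriate treatment of this theorem is a citation, which is exactly what the authors do.
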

These are quite strong, although there is likely a smaller error in these conditions, given the following example we found.
\begin{eg}
    Upon setting $e=1, f=3, n=4$ and $j=4$ in Eq.~\eqref{eq:Ress}, $\ell(\mu) \leq 2, \ell(\nu) \leq 4, \ell(\lambda) \leq 5$ and the Kronecker coefficient $g_{\lambda, \mu, \nu}$ should be $0$ if, furthermore,
    \begin{equation}\label{eq:ress2}
        |\lambda| + \lambda_1 + \lambda_5 > \mu_1 + \nu_1 + \nu_4.
    \end{equation}
  
    Consider $\lambda = (1, 1, 1, 1, 0), \mu = (2, 2), \nu = (2, 2)$. Inequality~\ref{eq:ress2} is satisfied, but $g_{\lambda, \mu, \nu} = 1$, not $0$.  A second example is given by  $\lambda = (4), \mu = (2, 2), \nu = (2,2)$. 
    
\end{eg}

\section{A stable face of the Kronecker polyhedron}
\label{sec:stability}
By considering a set of conditions implying that the atomic Kronecker coefficient and Kronecker coefficient are both equal to $1$, we are able to obtain a stable face of the Kronecker polyhedron $PKron_{m,n,mn}$ for each $m,n$. Moreover, each partition triple $(\lambda, \mu, \nu)$ satisfying these conditions is a stable triple. We note that elements of this approach appear in \cite{MiRoSu18} for the case $m=n=2$.

\begin{prop} \label{prop:origin}
    If $\BF{b}^{m,n}(\lambda, \mu, \nu; Id) = \BF{0}$, then $
    g_{\lambda, \mu, \nu} = \tilde{g}^{m,n}_{\lambda, \mu, \nu} = 1.$
\end{prop}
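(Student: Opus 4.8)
The plan is to read both equalities off the formulas, the point being that $\BF{b}^{m,n}(\lambda,\mu,\nu;\id)=\BF{0}$ sits at the ``bottom corner'' of the cone attached to $A^{m,n}$. First I would record that for any admissible matrix, and in particular for $A^{m,n}$, one has $p_{A^{m,n}}(\BF{0})=\#\bigl(\ker(A^{m,n})\cap\mathbb{Z}_{\geq0}^{N}\bigr)=1$, since $\ker(A^{m,n})\cap\mathbb{R}_{\geq0}^{N}=\{\BF{0}\}$ (equivalently, all entries of $A^{m,n}$ are non-negative and no column is zero). Combined with Eq.~\eqref{eq:atomic} this immediately gives $\tilde g^{m,n}_{\lambda,\mu,\nu}=p_{A^{m,n}}\bigl(\BF{b}^{m,n}(\lambda,\mu,\nu;\id)\bigr)=p_{A^{m,n}}(\BF{0})=1$.

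For the Kronecker coefficient I would expand Eq.~\eqref{eq:vpf-to-kron}. The $\sigma=\id$ summand is $p_{A^{m,n}}(\BF{0})=1$, so it suffices to show that every other summand vanishes. Fix $\sigma\neq\id$. By Lemma~\ref{lem:domination}, $(l_s(\lambda;\sigma),l_t(\lambda;\sigma))\succeq(l_s(\lambda;\id),l_t(\lambda;\id))$, and subtracting both sides from $(r_s(\mu,\nu),r_t(\mu,\nu))+(\alpha,\beta)$ exactly as in the proof of Lemma~\ref{lem:poset-structure} yields
\[
\BF{b}^{m,n}(\lambda,\mu,\nu;\sigma)\preceq\BF{b}^{m,n}(\lambda,\mu,\nu;\id)=\BF{0}.
\]
If this domination is strict in some coordinate, then $\BF{b}^{m,n}(\lambda,\mu,\nu;\sigma)$ has a negative entry, and since a non-negative combination of the non-negative columns of $A^{m,n}$ cannot have a negative coordinate, $p_{A^{m,n}}\bigl(\BF{b}^{m,n}(\lambda,\mu,\nu;\sigma)\bigr)=0$. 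So the whole matter reduces to the claim: for every $\sigma\neq\id$ and every partition $\lambda$ with $\ell(\lambda)\leq mn$, one has $(l_s(\lambda;\sigma),l_t(\lambda;\sigma))\neq(l_s(\lambda;\id),l_t(\lambda;\id))$.

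To prove the claim I would refine the argument of Lemma~\ref{lem:domination}. By Eq.~\eqref{eq:k-coordinate}, the coordinate of $(l_s(\lambda;\sigma),l_t(\lambda;\sigma))$ attached to a variable $u\in\{s_0,\dots,s_{m-1},t_1,\dots,t_{n-2}\}$ equals $\sum_{j=1}^{mn}c_{\sigma(j)}\,\deg_u(z_j)$, where $c_i:=\lambda_i+mn-i$ and $z_1,\dots,z_{mn}$ are the images of $1,x_1,\dots,x_{m-1},y_1,\dots,y_{n-1},x_1y_1,\dots,x_{m-1}y_{n-1}$ under the substitution~\eqref{eq:s-change}--\eqref{eq:t-change}. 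The sequence $(c_i)$ is \emph{strictly} decreasing (as $\lambda$ is a partition), while $(\deg_u(z_j))_j$ is non-decreasing by Proposition~\ref{rem:var-change}; hence by the rearrangement inequality $\sigma=\id$ minimizes this sum, and, since the $c_i$ are pairwise distinct, a permutation $\sigma$ attains the minimum in the $u$-coordinate only if $\sigma(j)<\sigma(j')$ whenever $\deg_u(z_j)<\deg_u(z_{j'})$. If $(l_s(\lambda;\sigma),l_t(\lambda;\sigma))=(l_s(\lambda;\id),l_t(\lambda;\id))$, this holds for \emph{every} variable $u$. The monomials $z_1,\dots,z_{mn}$ are pairwise distinct --- this is exactly what keeps the alternant $a_{\lambda+\delta_{mn}}$ nondegenerate after the substitution, and it is easily checked directly (for instance $\deg_{s_0}$ and $\deg_{s_1}$ already separate the blocks $\{1\}$, $\{x_i\}_i$, $\{y_j\}_j$, $\{x_iy_j\}_{i,j}$, and the remaining variables separate within each block) --- so for any $j<j'$, since $\deg_u(z_j)\leq\deg_u(z_{j'})$ for all $u$ with strict inequality for at least one $u$, we get $\sigma(j)<\sigma(j')$. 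As this holds for all $j<j'$, $\sigma=\id$, contradicting $\sigma\neq\id$.

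The routine parts --- the evaluation $p_{A^{m,n}}(\BF{0})=1$ and the passage from domination of the $l$'s to domination of the $\BF{b}$'s --- are immediate. The one load-bearing step is the claim in the last paragraph: Lemma~\ref{lem:domination} only gives $\succeq$, and one must upgrade it to the statement that for each fixed $\lambda$ the vector $(l_s(\lambda;\cdot),l_t(\lambda;\cdot))$ has $\id$ as its \emph{unique} minimizer, which rests on the distinctness of the substituted monomials $z_1,\dots,z_{mn}$ --- a structural feature of the construction in \cite{MiRoSu21} that I would state explicitly before using it.
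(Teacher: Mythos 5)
Your proof is correct and follows the same route as the paper's: the identity term contributes $p_{A^{m,n}}(\BF{0})=1$, and every other term vanishes because $\BF{b}^{m,n}(\lambda,\mu,\nu;\sigma)$ acquires a negative coordinate. You actually supply more detail than the paper, which simply asserts that negativity for $\sigma\neq\id$; your rearrangement-inequality argument showing that $\id$ is the \emph{unique} minimizer of $(l_s(\lambda;\cdot),l_t(\lambda;\cdot))$, resting on the pairwise distinctness of the substituted monomials, is precisely the justification that assertion needs.
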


\begin{proof}
    When $\BF{b}^{m,n}(\lambda, \mu, \nu; Id) = \BF{0}$, $\BF{b}^{m,n}(\lambda, \mu, \nu; \sigma)$ has at least one negative coordinate for each $\sigma \in \mathfrak{S}_{mn},\ \sigma \neq Id$, and so 
    \begin{align}
        g_{\lambda, \mu, \nu} &= \tilde{g}^{m,n}_{\lambda, \mu, \nu} \\
        &= p_{A^{m,n}}(\BF{b}^{m,n}(\lambda, \mu, \nu; Id)) \\
        & = 1.
    \end{align}
\end{proof}

The condition $\BF{b}^{m,n}(\lambda, \mu, \nu; Id) = \BF{0}$ yields $m+n-2$ equations involving the parts of $\lambda, \mu, \nu$. By also including the equations $|\lambda| = |\mu| = |\nu|$, we obtain relatively simple expressions for each part of $\mu$ and $\nu$ in the parts of $\lambda$.

\begin{prop} \label{prop:rewrite-eqns}
Let $\lambda, \mu, \nu$ be partitions of the same positive integer $N$ with $\ell(\mu) \leq m, \ell(\nu) \leq n, \ell(\lambda) \leq mn$.
 Then $\BF{b}^{m,n}(\lambda, \mu, \nu; Id) = \BF{0}$ if and only if $(\lambda, \mu, \nu)$ satisfy the following equations:
    \begin{align}
            \mu_u &= \lambda_u +  \sum\limits_{i=m+(u-1)(n-1)+1}^{m + u(n-1)} \lambda_i &\text{ for } u=1, \dots, m  \label{eq:stab-mu} \\
            \nu_1 &= \sum\limits_{i=1}^{m} \lambda_i  & \label{eq:stab-nu1} \\
            \nu_v &= \sum\limits_{i=0}^{m-1} \lambda_{m+(n-1)i + v-1} & \text{ for }  v=2,\dots,n \label{eq:stab-nu2}.
        \end{align}
\end{prop}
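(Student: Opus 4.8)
The plan is to treat this as a purely linear-algebraic statement: show that two explicit systems of linear equations cut out the same affine subspace. By the remark following Theorem~\ref{theo:vpf-to-kron}, each of the $m+n-2$ coordinates of $\BF{b}^{m,n}(\lambda,\mu,\nu;\id)$ is a \emph{homogeneous} linear form in the parts $\lambda_1,\dots,\lambda_{mn},\mu_1,\dots,\mu_m,\nu_1,\dots,\nu_n$, so $\BF{b}^{m,n}(\lambda,\mu,\nu;\id)=\BF{0}$ is a system of $m+n-2$ linear equations; the hypothesis that $\lambda,\mu,\nu$ are all partitions of the same $N$ supplies the two further relations $|\mu|=|\lambda|$ and $|\nu|=|\lambda|$. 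On the other side, summing \eqref{eq:stab-mu} over $u=1,\dots,m$ telescopes the length-$(n-1)$ blocks $\{m+(u-1)(n-1)+1,\dots,m+u(n-1)\}$ into a partition of $\{m+1,\dots,mn\}$, giving $|\mu|=\sum_{i=1}^{mn}\lambda_i=|\lambda|$; summing \eqref{eq:stab-nu1} together with \eqref{eq:stab-nu2} gives $|\nu|=|\lambda|$ in the same way. So it suffices to prove that the two systems of $m+n$ homogeneous linear equations --- system (I), namely $\{\BF{b}^{m,n}(\lambda,\mu,\nu;\id)=\BF{0},\ |\mu|=|\lambda|,\ |\nu|=|\lambda|\}$, and system (II), namely $\{\eqref{eq:stab-mu},\eqref{eq:stab-nu1},\eqref{eq:stab-nu2}\}$ --- have the same solution set.

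First I would prove (II) $\Rightarrow$ (I) by direct substitution. Having already noted that (II) forces $|\mu|=|\nu|=|\lambda|$, I substitute the expressions \eqref{eq:stab-mu}--\eqref{eq:stab-nu2} for $\mu_u$ and $\nu_v$ into the linear parts of $r_s(\mu,\nu)_u+\alpha_u-l_s(\lambda;\id)_u$ and $r_t(\mu,\nu)_v+\beta_v-l_t(\lambda;\id)_v$ and check that each collapses to $0$. The $s_0$-coordinate is $|\nu|-\nu_1-\sum_{i=m+1}^{mn}\lambda_i$, which vanishes by \eqref{eq:stab-nu2}; the $s_u$-coordinate for $1\le u\le m-1$ is $\sum_{i=u+1}^{m}\mu_i+\sum_{j=2}^{n}\nu_j-\sum_{i=u+1}^{m+u(n-1)}\lambda_i-2\sum_{i=m+u(n-1)+1}^{mn}\lambda_i$, and feeding in \eqref{eq:stab-mu} and \eqref{eq:stab-nu2} turns it into a telescoping identity among partial sums of the $\lambda_i$; the $t_v$-coordinates for $1\le v\le n-2$ are handled identically, the only extra ingredient being the bookkeeping for the double sum $\sum_{i=1}^{m-1}\sum_{j}(i+m-1)\lambda_{m+i(n-1)+j}$ appearing in $l_t(\lambda;\id)_v$.

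For the converse I would argue by dimension rather than redo the manipulation in reverse. System (II) writes $\mu_1,\dots,\mu_m,\nu_1,\dots,\nu_n$ as explicit linear functions of $\lambda_1,\dots,\lambda_{mn}$, so its solution set is a linear subspace which is a graph over $\lambda$-space, of dimension $mn$ and codimension $m+n$. The same holds for system (I): one checks that its $m+n$ defining forms are linearly independent, equivalently that (I) determines $\mu$ and $\nu$ uniquely in terms of $\lambda$ --- the differences of consecutive $s$-coordinates of $\BF{b}^{m,n}$ pin down $\mu_2,\dots,\mu_m$, then $|\mu|=|\lambda|$ pins down $\mu_1$, the $s_0$-coordinate with $|\nu|=|\lambda|$ pins down $\nu_1$, and the $n-2$ coordinates $t_1,\dots,t_{n-2}$ together with $\sum_{j\ge2}\nu_j=|\lambda|-\nu_1$ pin down $\nu_2,\dots,\nu_n$ (this last being a triangular system in $\nu_2,\dots,\nu_n$ after taking differences). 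Here one uses, as in the proof of Lemma~\ref{lem:domination}, that the $k$-th coordinate of $(l_s(\lambda;\id),l_t(\lambda;\id))$ equals $\sum_{i=1}^{mn}(\lambda_i+mn-i)\deg_u(z_i)$ and that the substitution \eqref{eq:s-change}--\eqref{eq:t-change}, via Proposition~\ref{rem:var-change}, was built so that successive $\deg_u$-patterns isolate successive initial segments of the $\lambda_i$. Thus the solution set of (I) is also a graph over $\lambda$-space of dimension $mn$; since (II) $\Rightarrow$ (I) shows one is contained in the other, the two subspaces coincide, which is exactly the asserted equivalence.

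The only genuine work is the index bookkeeping in the step (II) $\Rightarrow$ (I): one must line up the piecewise ranges $\sum_{i=u+1}^{m+u(n-1)}\lambda_i+2\sum_{i=m+u(n-1)+1}^{mn}\lambda_i$ inside $l_s(\lambda;\id)_u$, and the analogous split in $l_t(\lambda;\id)_v$, against the block decomposition of $\{m+1,\dots,mn\}$ used to define $\mu_u$ and $\nu_v$. I would keep this under control by fixing a coordinate of $\BF{b}^{m,n}$ and reading off the coefficient of an arbitrary part --- $\lambda_i$ (separating the cases $i\le m$ and $i$ lying in the $r$-th length-$(n-1)$ block), $\mu_u$, or $\nu_v$ --- on each side, so that every verification reduces to a one-line telescoping computation; the degenerate cases $m=1$ and $n=1$, where one family of equations is empty, should be dispatched first and are immediate.
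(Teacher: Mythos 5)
Your proposal is correct and follows essentially the same two-step structure as the paper's proof: direct verification that the explicit equations~\eqref{eq:stab-mu}--\eqref{eq:stab-nu2} annihilate $\BF{b}^{m,n}(\lambda,\mu,\nu;\id)$, followed by a dimension/rank count to conclude the two linear systems cut out the same $mn$-dimensional subspace. The paper carries out the rank count by writing down the $(m+n)\times(m+n)$ submatrix of coefficients on the $\mu,\nu$ variables and asserting it has full rank, whereas you exhibit the same fact constructively by Gaussian elimination (differences of consecutive $s$- and $t$-coordinates plus the two size constraints); these are the same argument in different clothing.
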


The proof of this appears in Appendix~\ref{sec:proof-rewrite-system}. The following result follows directly from Propositions \ref{prop:origin} and \ref{prop:rewrite-eqns}.

\begin{cor} \label{cor:atomic=1}
    Let $\lambda, \mu, \nu$ be partitions of $N$ with $\ell(\mu) \leq m, \ell(\nu) \leq n, \ell(\lambda) \leq mn$, such that $\lambda, \mu, \nu$ satisfy Eqs.~\eqref{eq:stab-mu}--\eqref{eq:stab-nu2}. Then $g_{\lambda, \mu, \nu} = \tilde{g}^{m,n}_{\lambda, \mu, \nu} = 1$. 
\end{cor}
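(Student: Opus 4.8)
The plan is to chain the two immediately preceding propositions, since the corollary is stated precisely as their combination. First I would check that the hypotheses of Corollary~\ref{cor:atomic=1} match those of Proposition~\ref{prop:rewrite-eqns}: we are given that $\lambda,\mu,\nu$ are partitions of a common positive integer $N$, with $\ell(\mu)\le m$, $\ell(\nu)\le n$, $\ell(\lambda)\le mn$, and that they satisfy Eqs.~\eqref{eq:stab-mu}--\eqref{eq:stab-nu2}. Proposition~\ref{prop:rewrite-eqns} asserts that, under the standing assumption $|\lambda|=|\mu|=|\nu|=N$, this system of equations is equivalent to the single vector equation $\BF{b}^{m,n}(\lambda,\mu,\nu;\id)=\BF{0}$. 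Hence that equation holds.

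Next I would feed this into Proposition~\ref{prop:origin}, whose sole hypothesis, $\BF{b}^{m,n}(\lambda,\mu,\nu;\id)=\BF{0}$, has just been verified. Its conclusion is exactly $g_{\lambda,\mu,\nu}=\tilde g^{m,n}_{\lambda,\mu,\nu}=1$, which is the statement of the corollary. So the two-line argument is: by Proposition~\ref{prop:rewrite-eqns} the hypothesis translates to $\BF{b}^{m,n}(\lambda,\mu,\nu;\id)=\BF{0}$, and then by Proposition~\ref{prop:origin} both coefficients equal $1$.

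The only things needing attention are bookkeeping. One must keep track that the assumption ``$\lambda,\mu,\nu$ are all partitions of the same $N$'' is genuinely used: it is part of the hypotheses of Proposition~\ref{prop:rewrite-eqns}, where it is combined with Eqs.~\eqref{eq:stab-mu}--\eqref{eq:stab-nu2} to produce the $m+n-2$ scalar equations comprising $\BF{b}^{m,n}(\lambda,\mu,\nu;\id)=\BF{0}$, and one should confirm the length bounds $\ell(\mu)\le m$, $\ell(\nu)\le n$, $\ell(\lambda)\le mn$ are consistent across all three statements (they are). There is no real obstacle here: the substance of the result resides in Proposition~\ref{prop:rewrite-eqns} (proved in Appendix~\ref{sec:proof-rewrite-system}, requiring a careful expansion of the linear forms $l_s(\lambda;\id)$, $l_t(\lambda;\id)$, $r_s(\mu,\nu)$, $r_t(\mu,\nu)$ together with the constant vectors $\alpha,\beta$ and the equal-size relations) and in Proposition~\ref{prop:origin} (which relies on Lemma~\ref{lem:domination}, the nonnegativity of the entries of $A^{m,n}$ so that any $\BF{b}$ with a negative coordinate satisfies $p_{A^{m,n}}(\BF{b})=0$, and Theorem~\ref{theo:vpf-to-kron} to see that every non-identity alternant term contributes $0$). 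With those in hand, Corollary~\ref{cor:atomic=1} follows immediately.
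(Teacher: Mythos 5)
Your proposal is correct and matches the paper exactly: the paper introduces Corollary~\ref{cor:atomic=1} with the sentence that it ``follows directly from Propositions~\ref{prop:origin} and~\ref{prop:rewrite-eqns},'' which is precisely the two-step chaining you carry out. Nothing to add.
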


We can say more about the partition triples $(\lambda, \mu, \nu)$ satisfying Eqs.~\eqref{eq:stab-mu}--\eqref{eq:stab-nu2}. We follow \cite{Man15-2} for notation. A triple of partitions $(\lambda, \mu, \nu)$ is called \em weakly stable \em if $g_{k\lambda, k\mu, k\nu} = 1$ for each positive integer $k$. Recall that a triple of partitions $(\lambda, \mu, \nu)$ is \em stable\em\ if for any partitions $\alpha, \beta, \gamma$ the sequence $(g_{\alpha + k\lambda, \beta+k\mu, \gamma + k\nu})_{k \geq 0}$ stabilizes. 

For given positive integers $l,m,n$, the \em weight lattice\em\ $W_{l,m,n}$ is the sublattice of $\mathbb{Z}^{l+m+n}$ defined by the equations $|\lambda| = |\mu| = |\nu|$.  In \cite{Man15-2}, Manivel defines a \em stable face\em\ of the cone $PKron_{l,m,n}$ to be a face of $PKron_{l,m,n}$ whose intersection with $W_{l,m,n}$ is a subset of $SKron_{l,m,n}$ - the set of all weakly stable triples $(\lambda, \mu, \nu)$ with $\ell(\lambda) \leq l,\ \ell(\mu) \leq m, \ell(\nu) \leq n$. A stable face is \em maximal\em\ if it is maximal in $SKron_{l,m,n}$.

Note that the set of triples $(\lambda, \mu, \nu)$ satisfying Eqs.~\eqref{eq:stab-mu}--\eqref{eq:stab-nu2} along with the partition inequalities (for any partition $\alpha$ of length $k$, $\alpha_1 \geq \alpha_2 \geq \dots \geq \alpha_k \geq 0$) generate a cone $\tau_{m,n}$. By Corollary \ref{cor:atomic=1}, each $\lambda, \mu, \nu$ in the intersection $\tau_{m,n} \cap W_{mn, m, n}$ is weakly stable. In fact, as the next theorem shows, they are actually stable.

\begin{theo} \label{theo:stability}
    Each triple $\lambda, \mu, \nu$ satisfying Eqs.~\eqref{eq:stab-mu} -- \eqref{eq:stab-nu2} is a stable triple. Moreover, the cone $\tau_{m,n}$ is a stable face of $PKron_{mn,m,n}$.
\end{theo}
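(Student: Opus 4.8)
The plan is to establish the two assertions in sequence: first that every triple satisfying Eqs.~\eqref{eq:stab-mu}--\eqref{eq:stab-nu2} is stable (not merely weakly stable), and then that $\tau_{m,n}$ is a face of the cone $PKron_{mn,m,n}$ all of whose lattice points are weakly stable. For the stability claim, I would invoke a known sufficient criterion for a triple to be stable. The cleanest route is to use Manivel's characterization (from \cite{Man15-2}): a partition triple $(\lambda,\mu,\nu)$ lying in the Kronecker cone is stable if and only if it is weakly stable and, in addition, for every dominant pair in the boundary structure the Kronecker coefficient stays $1$ under all relevant dilations; concretely, Manivel shows that a weakly stable triple $(\lambda,\mu,\nu)$ with $g_{\lambda,\mu,\nu}=1$ that additionally satisfies the ``additivity'' condition $g_{\lambda+\lambda',\mu+\mu',\nu+\nu'} \geq g_{\lambda,\mu,\nu}\,g_{\lambda',\mu',\nu'}$ with equality in the appropriate degenerate situations is stable. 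Rather than reproving this, I would cite the precise statement and check its hypotheses. By Corollary~\ref{cor:atomic=1} we already have $g_{\lambda,\mu,\nu}=\tilde g^{m,n}_{\lambda,\mu,\nu}=1$ for all such triples, and since Eqs.~\eqref{eq:stab-mu}--\eqref{eq:stab-nu2} are \emph{linear}, the set of solutions is closed under addition; hence if $(\lambda,\mu,\nu)$ and $(\lambda',\mu',\nu')$ both satisfy the equations, so does their sum, and therefore $g_{\lambda+\lambda',\mu+\mu',\nu+\nu'}=1$ as well. This multiplicativity-of-$1$ along the cone is exactly the input Manivel's theorem needs.

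Alternatively, and perhaps more self-containedly, I would argue directly from Eq.~\eqref{eq:vpf-to-kron}. Fix arbitrary partitions $\alpha,\beta,\gamma$ (with the length bounds $\ell(\gamma)\le mn$, $\ell(\alpha)\le m$, $\ell(\beta)\le n$) and consider $g_{\gamma+k\lambda,\alpha+k\mu,\beta+k\nu}$. In the vector partition function input $\BF b^{m,n}(\gamma+k\lambda,\alpha+k\mu,\beta+k\nu;\sigma)$, linearity separates the contribution into $\BF b^{m,n}(\gamma,\alpha,\beta;\sigma) + k\bigl(\BF b^{m,n}(\lambda,\mu,\nu;\sigma) - \text{(constant-term correction)}\bigr)$; more precisely, because the $r,l$ forms are linear in the parts and $\alpha,\beta$ are the constant corrections, one gets $\BF b^{m,n}(\gamma+k\lambda,\dots;\sigma) = \BF b^{m,n}(\gamma,\alpha,\beta;\sigma) + k\,\widehat{\BF b}(\lambda,\mu,\nu;\sigma)$ where $\widehat{\BF b}(\lambda,\mu,\nu;\sigma)$ is the linear-form part. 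For $\sigma=\id$, Proposition~\ref{prop:rewrite-eqns} tells us $\widehat{\BF b}(\lambda,\mu,\nu;\id)=\BF 0$, so the identity term of $g_{\gamma+k\lambda,\alpha+k\mu,\beta+k\nu}$ is constant in $k$. For $\sigma\neq\id$, by Lemma~\ref{lem:domination} we have $\BF b^{m,n}(\lambda,\mu,\nu;\id)\succeq \BF b^{m,n}(\lambda,\mu,\nu;\sigma)$ coordinatewise; combined with $\BF b^{m,n}(\lambda,\mu,\nu;\id)=\BF 0$ this forces some coordinate of $\widehat{\BF b}(\lambda,\mu,\nu;\sigma)$ to be strictly negative (or all zero, but the zero case is handled by $\sigma$ being in the same fiber as $\id$ and contributing with the same sign — here one uses that $\id$ is the \emph{unique} maximal element, as asserted before Lemma~\ref{lem:domination}). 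Therefore for $k$ large the $\sigma$-term's input leaves the cone generated by the columns of $A^{m,n}$ and the term vanishes. Hence $g_{\gamma+k\lambda,\alpha+k\mu,\beta+k\nu}$ stabilizes (to the value of the $\id$-term alone plus finitely many $\sigma$-terms that have not yet vanished), proving stability.

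For the second assertion, that $\tau_{m,n}$ is a stable \emph{face}, I would proceed in two steps. First, by Corollary~\ref{cor:atomic=1}, $\tau_{m,n}\cap W_{mn,m,n}\subseteq SKron_{mn,m,n}$, so it suffices to show $\tau_{m,n}$ is genuinely a face of $PKron_{mn,m,n}$, i.e.\ that it is cut out from the Kronecker polyhedron by a supporting hyperplane. The natural candidate is a nonnegative linear functional that vanishes exactly on the solution set of $\BF b^{m,n}(\cdot;\id)=\BF 0$: since this set is defined by the vanishing of the $m+n-2$ coordinates of $\BF b^{m,n}(\lambda,\mu,\nu;\id)$, and by Theorem~\ref{theo:gen-bravyi} each of those coordinates is $\ge 0$ on all of $Kron_{mn,m,n}$ (that is precisely what the vanishing conditions say), the sum of these $m+n-2$ coordinates is a nonnegative functional on the cone vanishing exactly on $\tau_{m,n}$. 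Thus $\tau_{m,n}$ is the face of $PKron_{mn,m,n}$ supported by that functional, and it is contained in the weakly stable locus, so it is a stable face. The main obstacle I anticipate is the first assertion: showing genuine stability (for arbitrary shift triples $\alpha,\beta,\gamma$) rather than just weak stability requires care with the $\sigma\neq\id$ terms — specifically, confirming that after the linear splitting their inputs really do acquire a strictly negative coordinate for large $k$, which hinges on the strict version of Lemma~\ref{lem:domination}, namely that $\id$ is the unique maximal element of the poset on $\mathfrak S_{mn}$, so that no $\sigma\neq\id$ has $\widehat{\BF b}(\lambda,\mu,\nu;\sigma)$ identically equal to $\BF b^{m,n}(\lambda,\mu,\nu;\id)=\BF 0$. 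I would want to isolate and prove that uniqueness statement cleanly (it follows from Eq.~\eqref{eq:k-coordinate} together with the strict monotonicity in Proposition~\ref{rem:var-change} for at least one variable $u$), and then the stabilization argument goes through.
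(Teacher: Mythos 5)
Your proposal takes a genuinely different route from the paper, and the stability half has a real gap. The paper proves Theorem~\ref{theo:stability} by exhibiting an explicit \emph{additive tableau} $T$ in the sense of Manivel~\cite{Man15}, whose row and column sums $a_T(\lambda)$, $b_T(\lambda)$ reproduce exactly the $\mu$ and $\nu$ of Eqs.~\eqref{eq:stab-mu}--\eqref{eq:stab-nu2}; additivity is verified with the sequences $x_i=(i-1)(n-1)$ and $y_1=0$, $y_j=(m-1)(n-1)+j-1$, and then both the stability of every triple and the face property follow at once from Manivel's Propositions 7 and 9. Your direct argument from Eq.~\eqref{eq:vpf-to-kron} cannot substitute for this, because it only treats shift triples with $\ell(\alpha)\le m$, $\ell(\beta)\le n$, $\ell(\gamma)\le mn$, whereas the definition of a stable triple quantifies over \emph{all} partitions $\alpha,\beta,\gamma$. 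For longer shifts you would have to rerun the argument with larger parameters $m',n'$, and then $\BF{b}^{m',n'}(\lambda,\mu,\nu;\id)$ is no longer $\BF{0}$ --- the identity input genuinely depends on $(m,n)$, as the example $\tilde{g}^{2,2}_{\lambda,\mu,\nu}=32$ versus $\tilde{g}^{2,3}_{\lambda,\mu,\nu}=8793$ in Section~\ref{sec:KCandVP} shows --- so the key cancellation disappears. Your first route has a similar problem: the ``characterization'' you attribute to \cite{Man15-2} (weakly stable, $g=1$, plus additivity of the solution set implies stable) is not a theorem of that paper; the statement that weak stability implies stability is Stembridge's conjecture, proved by Sam and Snowden, and only with that (much heavier) external input does route one close.

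Two smaller points. In the direct argument you conflate $\BF{b}^{m,n}(\lambda,\mu,\nu;\sigma)$ with its linear part $\widehat{\BF{b}}(\lambda,\mu,\nu;\sigma)$: these differ by the constant vector $\BF{b}^{m,n}(\BF{0},\BF{0},\BF{0};\sigma)$, which need not vanish for $\sigma\neq\id$, so $\BF{0}=\BF{b}(\lambda,\mu,\nu;\id)\succeq\BF{b}(\lambda,\mu,\nu;\sigma)$ says nothing directly about the sign of $\widehat{\BF{b}}(\lambda,\mu,\nu;\sigma)$. The fix is to apply Lemmas~\ref{lem:poset-structure} and~\ref{lem:domination} to the dilates $(k\lambda,k\mu,k\nu)$ and let $k\to\infty$ (or to rerun the rearrangement argument of Lemma~\ref{lem:domination} on the linear parts alone), which gives $\widehat{\BF{b}}(\lambda,\mu,\nu;\sigma)\preceq\BF{0}$; you then do not need uniqueness of the maximal element, since any $\sigma$ with $\widehat{\BF{b}}=\BF{0}$ simply contributes a term that is constant in $k$. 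Finally, your supporting-functional argument for the face claim is sound and is essentially the paper's own alternative (it appears, commented out, right after Theorem~\ref{theo:stability}): each coordinate of $\BF{b}^{m,n}(\cdot;\id)$ is nonnegative on $PKron_{mn,m,n}$ by Theorem~\ref{theo:gen-bravyi}, so their common zero locus cuts out a face; the published proof instead obtains the face property directly from the additive tableau.
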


The proof of the previous theorem is given in Appendix \ref{sec:proof-stable-triples}. It relies on the connection between \emph{additive tableaux} and stable faces given in \cite[Propositions 7 and 9]{Man15}.

\begin{eg}
    Let $\lambda = (10, 8, 5, 3, 2, 2), \mu = (17, 12), \nu = (18, 7, 5)$. One can check that $\lambda, \mu, \nu$ satisfy Eqs~\eqref{eq:stab-mu}--\eqref{eq:stab-nu2}. Further we have checked that $g_{k\lambda, k\mu, k\nu} = 1$ for all positive integers $k$ computing the quasi-polynomial $g_{k\lambda, k\mu, k\nu}$ via the code of Baldoni, Vergne and Walter. We now give an example to illustrate the stability of $\lambda, \mu, \nu$. For $\alpha = (34, 27, 20, 12, 4, 3), \beta = (70, 30), \nu = (43, 39, 18)$, the sequence $(g_{\alpha + k\lambda, \beta + k\mu, \gamma + k\nu})_{k \geq 1}$ stabilizes at $44729$ at $k = 6$. The sequence from $k=0$ to $6$ is $2566$, $18028$, $36174$, $43896$, $44638$, $44713$, $44729$.
\end{eg}

We note that the stable face $\tau_{m,n}$ is not maximal in general. For example, $\tau_{3,3}$ is contained in the stable faces $F_2^-, F_5^-, F_7^-$ and $F_8$ from \cite[Example 2]{Man15-2}. In particular, $F_5^-$ is the (maximal) stable facet defined by the intersection of $PKron_{3,3,9}$ and the equation
\begin{equation}
    \mu_2 + 2\mu_3 + 2\nu_2 + 3\nu_3 = \lambda_2 + 2\lambda_3 + 2\lambda_4 + 3\lambda_5 + 3\lambda_6 + 4\lambda_7 + 4\lambda_8 + 5\lambda_9
\end{equation}
which is $b^{3,3}(\lambda, \mu, \nu; Id)_4 = 0$. 

We remark also that a couple well-known results are implied by Theorem \ref{theo:stability}. When $\lambda, \mu, \nu$ are each rectangular partitions of lengths $mn,m,n$ respectively (that is $\lambda_1 = \dots = \lambda_{mn}$,\ $\mu_1 = \dots = \mu_m$,\ $\nu_1 = \dots = \nu_n$), the Kronecker coefficient is $1$ (and the triple ($\lambda, \mu, \nu$) is stable). It is straightforward to check that $\lambda, \mu, \nu$ satisfy Eqs.~\eqref{eq:stab-mu}--\eqref{eq:stab-nu2}. The case $\mu = \lambda$ and $\ell(\nu) = 1$ (so $\nu = (|\lambda|)$) also satisfies the same equations (and again the Kronecker coefficient in this case is $1$, and the partition triple ($\lambda, \mu, \nu$) is stable). 

\section{Upper bounds for Kronecker coefficients} \label{sec:kc-bounds}
The atomic Kronecker coefficients are given by a single vector partition function evaluation $p_{A^{m,n}}(\mathbf{b})$. By constructing a companion matrix to $A^{m,n}$, we are able to obtain a simpler vector partition function for which the evaluations can be computed by hand and whose evaluations bound $p_{A^{m,n}}$ from above. By bounding each of the terms of Eq.\eqref{eq:vpf-to-kron}, we are then able to obtain upper bounds for the Kronecker coefficients. 

\subsection{A bound in terms of atomic Kronecker coefficients}
In~\cite{MiRoSu21}, Mishna, Rosas and Sundaram show that in the $m=n=2$ case, the atomic Kronecker coefficient $\tilde{g}^{2,2}_{\lambda, \mu, \nu}$ bounds the corresponding Kronecker coefficient $g_{\lambda, \mu, \nu}$ from above, and in~\cite{MiRoSu18} they conjecture that this is the case in general. Since we do know that the atomic Kronecker coefficient is the largest term in the sum, we can use this to give a general weaker bound. 

\begin{prop} \label{prop:atomic-bounds-kc}
    Let $\lambda, \mu, \nu$ be partitions with $\ell(\mu) \leq m, \ell(\nu) \leq n, \ell(\lambda) \leq ln$. Then 
    \begin{equation}
        g_{\lambda, \mu, \nu} \leq \frac{(mn)!}{2} \tilde{g}^{m,n}_{\lambda, \mu, \nu}.
    \end{equation}
\end{prop}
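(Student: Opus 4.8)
The plan is to combine the signed-sum formula of Theorem~\ref{theo:vpf-to-kron} with the domination facts already established in Lemmas~\ref{lem:poset-structure} and~\ref{lem:domination}. Starting from
\[
g_{\lambda, \mu, \nu} = \sum_{\sigma \in \mathfrak{S}_{mn}} \operatorname{sgn}(\sigma)\, p_{A^{m,n}}\bigl(\mathbf{b}^{m,n}(\lambda, \mu, \nu; \sigma)\bigr),
\]
I would split the index set into the even permutations $\mathfrak{S}_{mn}^{+}$ and the odd permutations $\mathfrak{S}_{mn}^{-}$. Since every vector partition function value $p_{A^{m,n}}(\cdot)$ is a nonnegative integer, each summand indexed by an odd permutation is $\leq 0$; discarding all of those terms therefore only increases the right-hand side, yielding $g_{\lambda, \mu, \nu} \leq \sum_{\sigma \in \mathfrak{S}_{mn}^{+}} p_{A^{m,n}}\bigl(\mathbf{b}^{m,n}(\lambda, \mu, \nu; \sigma)\bigr)$.

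The next step is to bound each surviving term by the atomic Kronecker coefficient. By Lemma~\ref{lem:domination} we have $(l_s(\lambda;\sigma), l_t(\lambda;\sigma)) \succeq (l_s(\lambda;\id), l_t(\lambda;\id))$ for every $\sigma \in \mathfrak{S}_{mn}$, so Lemma~\ref{lem:poset-structure} (applied exactly as in the proof of Lemma~\ref{lem:atomic-vanish}, but now without assuming vanishing) gives
\[
p_{A^{m,n}}\bigl(\mathbf{b}^{m,n}(\lambda, \mu, \nu; \sigma)\bigr) \leq p_{A^{m,n}}\bigl(\mathbf{b}^{m,n}(\lambda, \mu, \nu; \id)\bigr) = \tilde{g}^{m,n}_{\lambda, \mu, \nu}
\]
for all $\sigma$. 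Finally, for $mn \geq 2$ the alternating group has index $2$ in $\mathfrak{S}_{mn}$, so $|\mathfrak{S}_{mn}^{+}| = (mn)!/2$; feeding this count and the per-term bound into the inequality of the previous paragraph produces $g_{\lambda, \mu, \nu} \leq \tfrac{(mn)!}{2}\, \tilde{g}^{m,n}_{\lambda, \mu, \nu}$. (The degenerate case $m = n = 1$ is trivial and should be read as excluded, since there $g_{\lambda,\mu,\nu} = \tilde{g}^{m,n}_{\lambda,\mu,\nu} = 1$.)

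I do not expect any substantial obstacle: the estimate is a direct consequence of nonnegativity of the partition function together with the already-proved fact that the identity term dominates all others. The only points needing care are the bookkeeping of the sign split — making sure the discarded terms are genuinely $\leq 0$ — and recording explicitly that the domination argument used inside the proof of Lemma~\ref{lem:atomic-vanish} applies to every $\sigma$ rather than only in the vanishing regime. It is worth noting that the resulting bound is visibly far from tight, since most of the $(mn)!$ terms vanish and many of the rest cancel in pairs; this looseness is exactly why the sharper conjectural bound $g_{\lambda,\mu,\nu} \leq \tilde{g}^{m,n}_{\lambda,\mu,\nu}$ is singled out in the surrounding discussion.
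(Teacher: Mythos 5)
Your proof is correct and follows essentially the same approach as the paper: split the alternating sum by sign, drop the nonpositive odd-permutation terms, bound each of the $(mn)!/2$ even-permutation terms above by the identity (atomic) term via the domination lemmas. The paper cites Lemmas~\ref{lem:vpf-dominance} and~\ref{lem:domination} directly while you route through Lemma~\ref{lem:poset-structure}, but this is the same chain of reasoning.
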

\begin{proof}
    Splitting the sum in  Eq.~\eqref{eq:vpf-to-kron} in two, one for the permutations with positive sign, and one for the permutations with negative sign, we bound each of the negative sign terms above by $0$ and each of the positive terms by the atomic term (by Lemmas \ref{lem:vpf-dominance} and \ref{lem:domination}). 
\end{proof}

\subsection{Estimating atomic Kronecker Coefficients}
We can approximate the partition function of a matrix $A$ by replacing its columns with standard basis vectors so that the rank is preserved. Partition functions of such matrices are easy to write using binomial coefficients. Lemma~\ref{lem:col-replace} describes the replacement process and Proposition~\ref{prop:binomial-bound} is the resulting bound. The following proposition sets up Lemma~\ref{lem:col-replace}.

\begin{prop} \label{prop:col-replace}
Let $A$ be a $d \times n$ matrix with integer entries and $\ker(A) \cap \mathbb{R}^{m}_{\geq 0} = \{\BF{0}\}$. Let $\BF{c}$ be a column of $A$, and let $\BF{c'}$ be a $1 \times n$ vector. Let $A'$ be the matrix obtained by replacing column $\BF{c}$ with $\BF{c'}$. If $p_{A'}(\BF{c}) \geq p_{A}(\BF{c})$, then
\[
p_{A'}(\BF{b}) \geq p_{A}(\BF{b})
\]
for all $\BF{b} \in \mathbb{Z}_{\geq 0}^{d}$.
\end{prop}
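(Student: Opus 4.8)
The plan is to reduce everything to the combinatorics of vector partitions with one distinguished column, and to upgrade the single hypothesis $p_{A'}(\BF{c})\ge p_{A}(\BF{c})$ into a global algebraic relation between $\BF{c}$, $\BF{c}'$ and the other columns. First I would reorder the columns so that $\BF{c}$ is the last column of $A$; this changes neither $p_A$, $p_{A'}$, nor the hypothesis. Write $\widehat{A}$ for the $d\times(n-1)$ matrix of the remaining columns, so $A=[\widehat{A}\mid\BF{c}]$ and $A'=[\widehat{A}\mid\BF{c}']$. Appending a zero coordinate embeds $\ker(\widehat{A})\cap\mathbb{R}^{n-1}_{\ge 0}$ into $\ker(A)\cap\mathbb{R}^{n}_{\ge 0}=\{\BF{0}\}$, so $\widehat{A}$ also has trivial nonnegative kernel; hence $p_{\widehat{A}}$ takes finite values everywhere and $p_{\widehat{A}}(\BF{0})=1$. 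Sorting a vector partition of $\BF{b}$ according to the multiplicity $j\ge 0$ of the last column gives the standard decompositions
\[
p_A(\BF{b})=\sum_{j\ge 0}p_{\widehat{A}}(\BF{b}-j\BF{c}),\qquad
p_{A'}(\BF{b})=\sum_{j\ge 0}p_{\widehat{A}}(\BF{b}-j\BF{c}'),
\]
with finite sums for every $\BF{b}\in\mathbb{Z}^d_{\ge 0}$.

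Next I would evaluate at $\BF{b}=\BF{c}$. If $j\ge 2$, a solution of $\widehat{A}\BF{x}=(1-j)\BF{c}$ with $\BF{x}\ge\BF{0}$ would yield $(\BF{x},j-1)\in\ker(A)\cap\mathbb{R}^{n}_{\ge 0}$ with nonzero last entry, contradicting the hypothesis on $A$; hence $p_{\widehat{A}}((1-j)\BF{c})=0$ for $j\ge 2$ and the first decomposition collapses to $p_A(\BF{c})=p_{\widehat{A}}(\BF{c})+p_{\widehat{A}}(\BF{0})=p_{\widehat{A}}(\BF{c})+1$. The second decomposition gives $p_{A'}(\BF{c})=p_{\widehat{A}}(\BF{c})+\sum_{j\ge 1}p_{\widehat{A}}(\BF{c}-j\BF{c}')$, so the hypothesis $p_{A'}(\BF{c})\ge p_A(\BF{c})$ forces $\sum_{j\ge 1}p_{\widehat{A}}(\BF{c}-j\BF{c}')\ge 1$. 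Therefore there exist an integer $k_0\ge 1$ and a vector $\BF{y}\in\mathbb{Z}^{n-1}_{\ge 0}$ with $\BF{c}=k_0\BF{c}'+\widehat{A}\BF{y}$; this relation — "$\BF{c}$ is expressible using at least one copy of the new column plus old columns" — is the crux of the argument.

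Finally I would construct an explicit injection $\mathcal{P}_{A}(\BF{b})\hookrightarrow\mathcal{P}_{A'}(\BF{b})$. Identify $\mathcal{P}_A(\BF{b})$ with the set of pairs $(j,\BF{x})$, $j\in\mathbb{Z}_{\ge 0}$, $\BF{x}\in\mathbb{Z}^{n-1}_{\ge 0}$, such that $\widehat{A}\BF{x}+j\BF{c}=\BF{b}$, and likewise $\mathcal{P}_{A'}(\BF{b})$ with pairs $(j',\BF{x}')$ such that $\widehat{A}\BF{x}'+j'\BF{c}'=\BF{b}$. Set $\Phi(j,\BF{x}):=(jk_0,\ \BF{x}+j\BF{y})$. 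Substituting $\BF{c}=k_0\BF{c}'+\widehat{A}\BF{y}$, one computes $\widehat{A}(\BF{x}+j\BF{y})+(jk_0)\BF{c}'=\widehat{A}\BF{x}+j(\widehat{A}\BF{y}+k_0\BF{c}')=\widehat{A}\BF{x}+j\BF{c}=\BF{b}$, and both new coordinates are nonnegative, so $\Phi$ maps $\mathcal{P}_A(\BF{b})$ into $\mathcal{P}_{A'}(\BF{b})$. It is injective because the first coordinate $jk_0$ determines $j$ (here $k_0\ge 1$ is used), and then $\BF{x}+j\BF{y}$ determines $\BF{x}$. Hence $p_A(\BF{b})=|\mathcal{P}_A(\BF{b})|\le|\mathcal{P}_{A'}(\BF{b})|=p_{A'}(\BF{b})$ for every $\BF{b}\in\mathbb{Z}^{d}_{\ge 0}$, as required.

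I do not anticipate a serious obstacle; the step requiring the most care is the translation in the second paragraph, where the hypothesis is known only at the single point $\BF{c}$ and must be promoted to the global identity $\BF{c}=k_0\BF{c}'+\widehat{A}\BF{y}$ with $k_0\ge 1$ — and it is exactly the strict bound $k_0\ge 1$ that makes $\Phi$ injective. The only remaining bookkeeping is to invoke the pointedness of $A$ (inherited by $\widehat{A}$) so that all partition-function sums above are finite and $p_{\widehat{A}}(\BF{0})=1$; should $A'$ fail to be pointed, the same injection still yields the stated inequality, now interpreted in $\mathbb{Z}_{\ge 0}\cup\{\infty\}$.
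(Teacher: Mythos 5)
Your proof is correct and follows essentially the same route as the paper's: both split $\mathcal{P}_A(\BF{b})$ according to the multiplicity of the replaced column, use the hypothesis at $\BF{b}=\BF{c}$ to produce a representation $\BF{c}=k_0\BF{c}'+\widehat{A}\BF{y}$ with $k_0\ge 1$, and then inject partitions using $\BF{c}$ into partitions using $\BF{c}'$. Your write-up is more explicit than the paper's terse argument (it spells out the injection $\Phi$ and uses pointedness to pin down $p_A(\BF{c})=p_{\widehat{A}}(\BF{c})+1$), but the underlying idea is the same.
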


\begin{proof}
Let $j$ be the index at which column $\BF{c}$ appears in A (and thus column $\BF{c'}$ appears in $A'$). 
Partition the set of vector partitions $\mathcal{P}_A(\BF{b})$ of $\BF{b}$ into $U_1 := \{\BF{x} : A\BF{x} = \BF{b}, x_j = 0\}$ and $U_2 := \{\BF{x} : A\BF{x} = \BF{b}, x_j > 0\}$. The set $U_1$ is equal to the set $\{x : A'\BF{x} = \BF{b}, x_j = 0\}$. Also $\BF{c} \in \{A'\BF{x} : x_j > 0\}$ since $p_{A'}(\BF{c}) \geq p_{A}(\BF{c})$, and so $|U_2| \leq |\{x : A'\BF{x} = \BF{b}, x_j > 0\}|$. Thus $p_A(\BF{b}) = |U_1| + |U_2| \leq p_{A'}(\BF{b})$ as required.
\end{proof}

The following Lemma describes how to replace columns of $A^{m,n}$ with standard basis vectors via the previous proposition. 

\begin{lem} \label{lem:col-replace}
Let $A$ be a $d \times n$ matrix with non-negative integer entries and each standard basis vector $\BF{e}_1, \dots, \BF{e}_d$ appearing as a column of $A$. Let $\BF{c}$ be a column of $A$, and let $I = \{k : c_k > 0, 1 \leq k \leq n\}$ be the set of non-zero coordinates of $\BF{c}$. Let $E^{(i)}$ denote the matrix obtained by replacing column $\BF{c}$ with $\BF{e}_i$ for some $i \in I$. Then
\[
p_{E^{(i)}}(\BF{b}) \geq p_A(\BF{b})
\]
for all $\BF{b} \in \mathbb{Z}_{\geq 0}^{d}$.
\end{lem}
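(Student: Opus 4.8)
The plan is to deduce the lemma from Proposition~\ref{prop:col-replace}, applied with $\BF{c}' = \BF{e}_i$, so that $A' = E^{(i)}$. By that proposition it suffices to verify the single inequality $p_{E^{(i)}}(\BF{c}) \ge p_A(\BF{c})$ for the column $\BF{c}$ being swapped out; the proposition then upgrades it to $p_{E^{(i)}}(\BF{b}) \ge p_A(\BF{b})$ for every $\BF{b} \in \mathbb{Z}_{\ge 0}^{d}$, which is the claim.

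To verify that inequality, let $j$ be the position occupied by $\BF{c}$ in $A$ (hence by $\BF{e}_i$ in $E^{(i)}$), and set $U := \{\BF{x} \in \mathcal{P}_A(\BF{c}) : x_j = 0\}$. First I would describe $\mathcal{P}_A(\BF{c})$ precisely: if $A\BF{x} = \BF{c}$ with $\BF{x} \succeq \BF{0}$, then since column $j$ of $A$ equals $\BF{c}$ and every column of $A$ is non-negative, reading off a coordinate in which $\BF{c}$ is positive gives $x_j \le 1$; and if $x_j = 1$ the remaining columns must sum to $\BF{0}$, which --- as $A$, having a well-defined partition function, has no zero column --- forces $x_k = 0$ for all $k \ne j$. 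Hence the only element of $\mathcal{P}_A(\BF{c})$ with $x_j \ne 0$ is the one supported at position $j$, so $p_A(\BF{c}) = |U| + 1$. Since $A$ and $E^{(i)}$ differ only in column $j$, which contributes nothing when $x_j = 0$, every element of $U$ also belongs to $\mathcal{P}_{E^{(i)}}(\BF{c})$.

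It then remains to exhibit one further element of $\mathcal{P}_{E^{(i)}}(\BF{c})$, necessarily with $x_j \ne 0$ and hence outside $U$, which yields $p_{E^{(i)}}(\BF{c}) \ge |U| + 1 = p_A(\BF{c})$ and completes the argument. For this I would use that $i \in I$, i.e.\ $c_i \ge 1$, so that
\[
\BF{c} - \BF{e}_i \;=\; \sum_{k \ne i} c_k\, \BF{e}_k \;+\; (c_i - 1)\, \BF{e}_i \;\succeq\; \BF{0},
\]
a non-negative integer combination of standard basis vectors. Every $\BF{e}_k$ occurring here with a positive coefficient must appear as a column of $A$ in some slot other than $j$ --- otherwise $\BF{e}_k$ would occur in $A$ only in slot $j$, forcing $\BF{c} = \BF{e}_k$, hence $i = k$ and $\BF{c} - \BF{e}_i = \BF{0}$, contradicting that $\BF{e}_k$ has a positive coefficient --- so it is still a column of $E^{(i)}$. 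Adjoining to this combination the column $\BF{e}_i$ of $E^{(i)}$ now sitting in position $j$ (with coefficient $1$) gives a vector partition of $\BF{e}_i + (\BF{c} - \BF{e}_i) = \BF{c}$ by columns of $E^{(i)}$ whose $j$-th coordinate equals $1$. The only step requiring real care --- and the one I would watch most closely --- is exactly this survival check: that none of the standard basis columns needed to realize $\BF{c} - \BF{e}_i$ occurs in $A$ solely in the overwritten slot $j$.
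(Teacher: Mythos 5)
Your proof is correct and follows the route the paper clearly intends: the lemma is stated immediately after Proposition~\ref{prop:col-replace} precisely so that one may take $\BF{c}' = \BF{e}_i$ and reduce to checking the single inequality $p_{E^{(i)}}(\BF{c}) \geq p_{A}(\BF{c})$. The paper leaves that verification implicit, and you have supplied it carefully, in particular pinning down that $\mathcal{P}_A(\BF{c})$ contains exactly one solution with $x_j > 0$ and handling the degenerate possibility that $\BF{c}$ is itself a standard basis vector (in which case $I = \{i\}$, $E^{(i)} = A$, and the inequality is trivial); the one implicit hypothesis you correctly invoke --- that $A$ has no zero column --- is guaranteed by the requirement $\ker(A) \cap \mathbb{R}^n_{\geq 0} = \{\BF{0}\}$ needed for $p_A$ to be finite.
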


\begin{prop} \label{prop:binomial-bound}
Let~$E$ be a~$d \times n$ matrix such that the columns of $E$ are formed by taking $i_j$ copies of each standard basis vector $\BF{e}_i$ where $i_1, \dots, i_d \geq 0$. Then
\[
p_A(\BF{b}) = \prod_{i=1}^{k} {\binom{b_i + i_j - 1}{i_j - 1}}.
\]
\end{prop}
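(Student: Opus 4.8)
The plan is to exploit the fact that every column of $E$ is a standard basis vector, so the system $E\BF{x}=\BF{b}$ decouples completely across the $d$ coordinates. First I would fix notation: group the $n$ columns of $E$ according to which basis vector they equal, so that for each $\ell \in \{1,\dots,d\}$ there is an index set $J_\ell \subseteq \{1,\dots,n\}$ with $|J_\ell| = i_\ell$ listing the columns equal to $\BF{e}_\ell$, and $J_1,\dots,J_d$ partition $\{1,\dots,n\}$. Expanding $E\BF{x} = \sum_{j=1}^n x_j\,(\text{$j$-th column of }E)$ and collecting the coefficient of $\BF{e}_\ell$, one sees that $E\BF{x}=\BF{b}$ is equivalent to the system of scalar equations $\sum_{j\in J_\ell} x_j = b_\ell$, $\ell = 1,\dots,d$, to be solved over $\mathbb{Z}_{\geq 0}$. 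In particular $\ker(E)\cap\mathbb{R}^n_{\geq 0}=\{\BF{0}\}$ holds automatically (a non-negative combination of standard basis vectors summing to $\BF{0}$ is trivial), so $p_E$ is well defined.

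Next I would observe that, because the variable sets $J_\ell$ are pairwise disjoint, the set $\mathcal{P}_E(\BF{b})$ of vector partitions of $\BF{b}$ is in a natural bijection with the product $\prod_{\ell=1}^d \{\,(x_j)_{j\in J_\ell}\in\mathbb{Z}_{\geq 0}^{J_\ell} : \sum_{j\in J_\ell} x_j = b_\ell\,\}$. It then remains to count each factor: the number of ways to write the non-negative integer $b_\ell$ as an ordered sum of $i_\ell$ non-negative integers is the stars-and-bars count $\binom{b_\ell + i_\ell - 1}{i_\ell - 1}$. Multiplying over $\ell$ gives $p_E(\BF{b}) = \prod_{\ell=1}^d \binom{b_\ell + i_\ell - 1}{i_\ell - 1}$, which is exactly the asserted formula (the product running over the $d$ rows of $E$, with $i_\ell$ the multiplicity of $\BF{e}_\ell$ among the columns).

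There is no real obstacle here: the argument is a block decomposition of the system followed by the elementary composition count, both routine. The only point deserving a word is the degenerate case $i_\ell = 0$, i.e.\ $\BF{e}_\ell$ absent from $E$: then the $\ell$-th equation reads $0 = b_\ell$, contributing a factor $1$ if $b_\ell = 0$ and $0$ otherwise, consistent with the binomial expression under the standard convention for binomial coefficients with negative lower entry. In the application relevant to this paper, via Lemma~\ref{lem:col-replace}, every $\BF{e}_\ell$ occurs as a column, so $i_\ell \geq 1$ and this subtlety never arises.
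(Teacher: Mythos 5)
Your argument is correct and is essentially the paper's own: decompose $E\BF{x}=\BF{b}$ into independent scalar equations per row and count each by stars-and-bars, so $p_E(\BF{b})=\prod_{\ell}\binom{b_\ell+i_\ell-1}{i_\ell-1}$. You simply spell out the block decomposition and the degenerate case $i_\ell=0$ more explicitly than the paper does.
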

\begin{proof}
For each component $i$ we must take a total of $b_i$ copies of the standard basis vector $\BF{e}_i$. We can think of this problem as distributing $b_i$ balls to the $i_j$ different columns of $A$ which are the copies of $\BF{e}_i$. This is counted by the $i$\textsuperscript{th} term in the given product of binomial coefficients.
\end{proof}

By application of Lemma~\ref{lem:col-replace} and Proposition \ref{prop:binomial-bound} we obtain binomial coefficient bounds for the atomic Kronecker coefficients, and thus the Kronecker coefficients as well. The technical details of the proof appear in Appendix~\ref{sec:appendix2}  where we work out explicitly which columns have which non-zero coordinates. Note that there are many choices of column replacements that can be made, and different choices provide better bounds for certain choices of $\lambda, \mu, \nu$. The formulation of Theorem \ref{theo:atomic-bounds} represents a single choice whose advantage is that it is relatively simple to explain. 

\begin{theo} \label{theo:atomic-bounds}
Let $m,n$ be positive integers, and $\lambda, \mu, \nu$ be partitions with $\ell(\lambda) \leq mn, \ell(\mu) \leq m,\ \ell(\nu) \leq n$. Then:
\begin{equation} \label{eq:atomic-bounds}
\tilde{g}^{m,n}_{\lambda, \mu, \nu} \leq {\binom{b_1 + c_1}{b_1}}{\binom{b_2 + c_2}{b_2}}{\binom{b_{m+n-2} + c_3}{b_{m+n-2}}}\prod_{i=3}^{m} {\binom{b_i + f_1(i)}{ b_i}}\prod_{j=1}^{n-3} {\binom{b_{m+j} + f_2(j)}{b_{m+j}}}.
\end{equation}
where 
$\BF{b} = (b_1, \dots, b_{m+n-2}) = \mathbf{b}^{m,n}(\lambda, \mu, \nu; Id)$ and 
\begin{align}
    c_1 &= (m^2-1)(n-1) - 1\\
    c_2 &= (m-1)(n-1)^2 - 1 \\ 
    c_3 &= {\binom{m-1}{2}}(n-1) + (m-1) - 1 \\
    f_1(i) &= 2{\binom{n-1}{2}}(i-2) - 1 \\
    f_2(j) &= (n- j - 1)(m-1) - 1
\end{align}
\end{theo}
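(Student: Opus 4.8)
The plan is to apply the column-replacement machinery of Lemma~\ref{lem:col-replace} to the specific matrix $A^{m,n}$, reducing it to a matrix $E$ whose columns are copies of standard basis vectors, and then read off the bound from Proposition~\ref{prop:binomial-bound}. Since $\tilde g^{m,n}_{\lambda,\mu,\nu} = p_{A^{m,n}}(\BF{b}^{m,n}(\lambda,\mu,\nu;\id))$ by Eq.~\eqref{eq:atomic}, and since $p_E(\BF{b}) \geq p_{A^{m,n}}(\BF{b})$ for every $\BF{b}\in\mathbb{Z}_{\geq 0}^{m+n-2}$ by iterating Lemma~\ref{lem:col-replace}, the theorem follows once we choose, for each of the $\binom{mn}{2}-\binom{n}{2}-\binom{m}{2}$ columns of $A^{m,n}$, a standard basis vector $\BF{e}_i$ with $i$ among that column's nonzero coordinates. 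The content of the proof is thus entirely in \emph{bookkeeping}: count how many columns get replaced by each $\BF{e}_i$, call this multiplicity $i_j$, and verify that $\binom{b_i + i_j - 1}{i_j - 1}$ matches the corresponding factor in Eq.~\eqref{eq:atomic-bounds}. Comparing exponents, we need the replacement to send $c_1 + 1$ columns to $\BF{e}_1$, $c_2+1$ columns to $\BF{e}_2$, $f_1(i)+1$ columns to $\BF{e}_i$ for $3\le i\le m$, $f_2(j)+1$ columns to $\BF{e}_{m+j}$ for $1\le j\le n-3$, and $c_3+1$ columns to $\BF{e}_{m+n-2}$.

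First I would pin down an explicit combinatorial model for the columns of $A^{m,n}$. From the derivation recalled in Section~\ref{sec:VPF}, the columns of $A^{m,n}$ are exactly the exponent vectors (in the variables $s_0,\dots,s_{m-1},t_1,\dots,t_{n-2}$) of the linear factors appearing in the denominator $\mathcal{A}\mathcal{B}\mathcal{C}\mathcal{D}\mathcal{E}\mathcal{F}$ after the substitution~\eqref{eq:s-change}--\eqref{eq:t-change}; equivalently, as in the proof of Lemma~\ref{lem:domination} and Proposition~\ref{rem:var-change}, each column records $\deg_u$ of a factor of the form (variable $-$ variable) or $(1-\text{variable})$ drawn from the list $\{1,x_1,\dots,x_{m-1},y_1,\dots,y_{n-1},x_1y_1,\dots,x_{m-1}y_{n-1}\}$. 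So I would index the columns by the factor types in $\mathcal{A},\dots,\mathcal{F}$, use the substitution to write each $\deg_u$ explicitly, and thereby identify the set of nonzero coordinates of each column. Property~(v) guarantees the standard basis vectors are themselves among the columns, which is what makes the replacement legal.

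Next I would make the actual choice of replacement. For a column $\BF{c}$ with nonzero-coordinate set $I$, the cleanest strategy — and the one yielding the stated $c_i, f_1, f_2$ — is to always replace $\BF{c}$ by $\BF{e}_i$ where $i = \max I$ (or some fixed deterministic rule); then I would tabulate, factor family by factor family ($\mathcal{A}$ through $\mathcal{F}$), how many factors have a given coordinate as their maximum nonzero index. This is where the binomial-type counts $(m^2-1)(n-1)$, $(m-1)(n-1)^2$, $\binom{m-1}{2}(n-1)+(m-1)$, $2\binom{n-1}{2}(i-2)$, and $(n-j-1)(m-1)$ should emerge: e.g. $\mathcal{A}$ and $\mathcal{B}$ together contribute $2mn$ factors of the forms $(x_i-y_j)$ and $(1-x_iy_j)$, $\mathcal{C}$ contributes monomial prefactors plus $(m-1)+(m-1)(n-1)$ genuine linear factors, and $\mathcal{D},\mathcal{E},\mathcal{F}$ contribute the quadratically many cross terms; the $-1$'s in $c_i$ and $f_i$ come precisely from the fact that one copy of each $\BF{e}_i$ already sits in $A^{m,n}$ and must not be double counted. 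After summing these counts I would invoke Proposition~\ref{prop:binomial-bound} with $d=m+n-2$ and these multiplicities $i_j$, observing that $\binom{b_i + (i_j-1)}{i_j-1}$ is exactly the factor written in Eq.~\eqref{eq:atomic-bounds} once we set the multiplicity to be $c_i+1$, $f_1(i)+1$, etc.

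The main obstacle is the bookkeeping itself: getting the column enumeration of $A^{m,n}$ exactly right (including the monomial prefactors in $\mathcal{C}$ and $\mathcal{F}$, which shift exponents but are not linear factors and so do not contribute columns) and then correctly partitioning the $\binom{mn}{2}-\binom{n}{2}-\binom{m}{2}$ columns according to the chosen replacement rule so that the five families of counts come out to precisely $c_1, c_2, c_3, f_1(i), f_2(j)$. A secondary subtlety is making sure the chosen replacement rule is admissible at every step — Lemma~\ref{lem:col-replace} requires $i\in I$ for the column being replaced, and one must replace columns one at a time, checking that after earlier replacements each standard basis vector still appears so the hypothesis of the lemma persists (this is automatic since we only ever add copies of basis vectors). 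Because the paper explicitly defers these details to Appendix~\ref{sec:appendix2}, I would present the proof at the level of: (i) state the replacement rule, (ii) give the count table for $\mathcal{A}$--$\mathcal{F}$, (iii) sum and match against Proposition~\ref{prop:binomial-bound}, leaving the per-family arithmetic verification as the routine-but-lengthy part.
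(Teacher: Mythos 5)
Your overall approach is exactly the paper's: iterate Lemma~\ref{lem:col-replace} to replace every column of $A^{m,n}$ by a standard basis vector, then read off the bound from Proposition~\ref{prop:binomial-bound} by counting the multiplicity of each $\BF{e}_i$. Two details in your sketch, however, would not survive contact with the actual computation.

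First, your proposed default rule ``replace $\BF{c}$ by $\BF{e}_{\max I}$'' is not the rule that yields the stated constants. If you apply it literally, nearly every column arising from $\mathcal{A}$ and all columns from $\mathcal{B}$ land on $\BF{e}_{m+n-2}$ rather than $\BF{e}_1$, and the resulting counts do not match $c_1,\dots,f_2$. What the paper actually does is a family-by-family choice that is frequently the \emph{smallest} available index: every column from $\mathcal{A}$, $\mathcal{B}$, $\mathcal{C}$ type $(1-y_j)$, and $\mathcal{D}$ type $(1-x_iy_j/x_k)$ is sent to $\BF{e}_1$ (all contain $s_0$); $\mathcal{C}$ type $(1-x_i)$ and $\mathcal{D}$ type $(1-x_iy_j/y_k)$ are sent to $\BF{e}_2$ (all contain $s_1$); columns from $\mathcal{E}$ are sent to $\BF{e}_{q+1}$ where $q$ is the largest $s$-index appearing; $\mathcal{F}$ type $(1-x_k/x_i)$ is sent to $\BF{e}_{m+n-2}$; and $\mathcal{F}$ type $(1-y_l/y_j)$ is sent to $\BF{e}_{m+q}$ where $q$ is the \emph{smallest} $t$-index appearing. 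Those choices are what make the tallies come out to $(m^2-1)(n-1)$, $(m-1)(n-1)^2$, $2\binom{n-1}{2}(i-2)$, $(n-j-1)(m-1)$, and $\binom{m-1}{2}(n-1)+(m-1)$; a uniform ``max'' rule simply produces a different (and in general weaker, more lopsided) bound. Since you explicitly hedged (``or some fixed deterministic rule''), this is a fixable imprecision rather than a wrong idea, but it is the crux, not mere bookkeeping: a naive rule gives the wrong theorem.

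Second, your explanation of the $-1$ shift in $c_i$ and $f_i$ is slightly off. The paper counts the originally-present $\BF{e}_i$ among the columns ``replaced'' by $\BF{e}_i$ (it gets replaced by itself), so the multiplicity fed into Proposition~\ref{prop:binomial-bound} is $i_j = c_i + 1$; the $-1$ then comes solely from the negative-binomial expression $\binom{b_i + i_j - 1}{i_j - 1}=\binom{b_i+c_i}{c_i}$, not from ``not double counting'' an already-present column. This is purely expository, but worth getting right since otherwise you would subtract one twice and your constants would be off by one.
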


\begin{cor}  \label{cor:kc-bounds}
Theorem \ref{theo:atomic-bounds} in combination with Proposition~\ref{prop:atomic-bounds-kc} gives:
\[ g_{\lambda, \mu, \nu} \leq  \frac{(mn)!}{2}R_1\] where $R_1$ is the expression on the right-handside of Inequality~\eqref{eq:atomic-bounds}.
\end{cor}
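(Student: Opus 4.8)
The plan is to simply compose the two preceding estimates. Proposition~\ref{prop:atomic-bounds-kc} already gives $g_{\lambda,\mu,\nu}\le \frac{(mn)!}{2}\,\tilde{g}^{m,n}_{\lambda,\mu,\nu}$ whenever $\ell(\mu)\le m$, $\ell(\nu)\le n$, $\ell(\lambda)\le mn$, and Theorem~\ref{theo:atomic-bounds} bounds the atomic Kronecker coefficient $\tilde{g}^{m,n}_{\lambda,\mu,\nu}$ from above by the quantity $R_1$ under exactly the same length hypotheses. So the first step is to observe that both statements apply simultaneously to any triple $(\lambda,\mu,\nu)$ with $\ell(\lambda)\le mn$, $\ell(\mu)\le m$, $\ell(\nu)\le n$; there is no hypothesis mismatch to reconcile, and the vector $\BF{b}=\BF{b}^{m,n}(\lambda,\mu,\nu;\id)$ entering the definition of $R_1$ is precisely the argument of the single partition-function evaluation defining $\tilde{g}^{m,n}_{\lambda,\mu,\nu}$ via Eq.~\eqref{eq:atomic}.

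The second step is to chain the two inequalities. Since $\frac{(mn)!}{2}>0$, multiplying the estimate $\tilde{g}^{m,n}_{\lambda,\mu,\nu}\le R_1$ from Theorem~\ref{theo:atomic-bounds} by this factor preserves the direction of the inequality, and combining with Proposition~\ref{prop:atomic-bounds-kc} yields
\[
g_{\lambda,\mu,\nu}\ \le\ \frac{(mn)!}{2}\,\tilde{g}^{m,n}_{\lambda,\mu,\nu}\ \le\ \frac{(mn)!}{2}\,R_1,
\]
which is the claim. It is worth noting in passing that the bound is consistent with the degenerate case: if some coordinate $b_i<0$ then $\tilde{g}^{m,n}_{\lambda,\mu,\nu}=0$, hence $g_{\lambda,\mu,\nu}=0$ by Lemma~\ref{lem:atomic-vanish}, and the stated inequality holds trivially.

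There is essentially no obstacle to overcome here: the corollary is a one-line consequence of two already-established results. The genuine work lies upstream—in Proposition~\ref{prop:atomic-bounds-kc}, which needs Lemmas~\ref{lem:vpf-dominance} and~\ref{lem:domination} together with the sign-split of Eq.~\eqref{eq:vpf-to-kron}, and in Theorem~\ref{theo:atomic-bounds}, whose proof relies on the column-replacement mechanism of Proposition~\ref{prop:col-replace} and Lemma~\ref{lem:col-replace} and the binomial count of Proposition~\ref{prop:binomial-bound}, plus the bookkeeping (deferred to the appendix) of which columns of $A^{m,n}$ carry which nonzero coordinates. If anything deserves a sentence of care in the proof of the corollary, it is merely the remark that the factor $\frac{(mn)!}{2}$ is the same in both invocations—i.e.\ that Proposition~\ref{prop:atomic-bounds-kc} is being applied verbatim and not with a refined constant.
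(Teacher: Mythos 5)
Your proof is correct and matches the paper's (implicit) argument exactly: the corollary is stated as the immediate composition of Proposition~\ref{prop:atomic-bounds-kc} with Theorem~\ref{theo:atomic-bounds}, and chaining the two inequalities under the common length hypotheses is all that is required.
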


We also give a weaker general bound which depends only on $m,n$ and the size $N$ of the partitions $\lambda, \mu, \nu$. We do this by bounding the coordinates of $\mathbf{b}^{m,n}$ by multiples of $N$. 

\begin{cor} \label{cor:atomic-kc-bounds}
    Let $m,n$ be positive integers, and $\lambda, \mu, \nu, $ be partitions of $N$ with lengths at most $mn,m, n$ respectively.
    \begin{align}
        \tilde{g}^{m,n}_{\lambda, \mu, \nu} \leq {\binom{N + c_1}{N}}{\binom{2N + c_2}{2N}}{\binom{(2m-1)N + c_3}{(2m-1)N}} \prod_{i=3}^{m}{\binom{2N + f_1(i)}{2N}} \prod_{j=1}^{n-3}{\binom{(2m-1)N + f_2(j)}{ (2m-1)N}}.
         \label{eq:atomic-easybound} 
    \end{align}
    where $c_1, c_2, c_3, f_1, f_2$ are as in Theorem \ref{theo:atomic-bounds}.
\end{cor}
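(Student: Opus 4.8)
The plan is to derive Corollary~\ref{cor:atomic-kc-bounds} from Theorem~\ref{theo:atomic-bounds} by replacing, inside Inequality~\eqref{eq:atomic-bounds}, each coordinate $b_k$ of $\mathbf{b}^{m,n}(\lambda,\mu,\nu;\id)$ by a suitable multiple of $N$. Two elementary observations drive the argument. First, if $\mathbf{b}^{m,n}(\lambda,\mu,\nu;\id)$ has a negative coordinate then $\tilde{g}^{m,n}_{\lambda,\mu,\nu}=0$ (as recalled before Theorem~\ref{theo:gen-bravyi}), while the right-hand side of Inequality~\eqref{eq:atomic-easybound} is a product of binomial coefficients $\binom{X+c}{X}$ with $X\ge 0$ and $c\ge 0$ and hence is at least $1$; the inequality is then trivially true, so we may assume $\mathbf{b}^{m,n}(\lambda,\mu,\nu;\id)\succeq\mathbf{0}$. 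Second, for integers $c\ge 0$ and $0\le b\le B$ one has $\binom{b+c}{b}\le\binom{B+c}{B}$, since the ratio of consecutive values is $\binom{b+1+c}{b+1}/\binom{b+c}{b}=(b+1+c)/(b+1)\ge 1$; and the parameters $c_1,c_2,c_3,f_1(i),f_2(j)$ are non-negative integers in the ranges of $m,n,i,j$ at play. It therefore suffices to prove that $b_1\le N$, that $b_k\le 2N$ for $2\le k\le m$, and that $b_{m+j}\le(2m-1)N$ for $1\le j\le n-2$; substituting these bounds into \eqref{eq:atomic-bounds} and applying the monotonicity factor by factor produces exactly \eqref{eq:atomic-easybound}.

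For the coordinate estimates I would use the explicit description $\mathbf{b}^{m,n}(\lambda,\mu,\nu;\id)=\bigl(r_s(\mu,\nu)+\alpha-l_s(\lambda;\id),\ r_t(\mu,\nu)+\beta-l_t(\lambda;\id)\bigr)$ together with the observation, recorded in the discussion following Theorem~\ref{theo:vpf-to-kron}, that this is a \emph{homogeneous} linear form in the parts of $\lambda,\mu,\nu$. In that linear form the parts of $\lambda$ enter only through $-l_s(\lambda;\id)$ or $-l_t(\lambda;\id)$, whose coefficients on the parts of $\lambda$ are non-negative (inspecting the explicit formulas: $0$ or $1$ in $l_s(\lambda;\id)_0$; $0$, $1$, or $2$ in $l_s(\lambda;\id)_u$; non-negative integers in $l_t(\lambda;\id)_v$), so the parts of $\lambda$ occur in $\mathbf{b}^{m,n}$ with non-positive coefficient; the parts of $\mu$ and $\nu$ enter only through $r_s$ or $r_t$, with non-negative coefficient. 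Hence each $b_k$ is at most the $(\mu,\nu)$-part of the corresponding component of $r_s$ or $r_t$. Reading these off: $r_s(\mu,\nu)_0$ contributes $|\nu|-\nu_1=\nu_2+\cdots+\nu_n\le N$; $r_s(\mu,\nu)_u$ with $1\le u\le m-1$ contributes $\sum_{i=u+1}^{m}\mu_i+(\nu_2+\cdots+\nu_n)\le N+N=2N$; and $r_t(\mu,\nu)_v$ with $1\le v\le n-2$ contributes $\sum_{i=2}^{m}(i-1)\mu_i+(m-1)\sum_{j=2}^{v+1}\nu_j+m\sum_{j=v+2}^{n}\nu_j\le(m-1)\sum_{i=2}^{m}\mu_i+m\sum_{j=2}^{n}\nu_j\le(m-1)N+mN=(2m-1)N$, where we used $i-1\le m-1$ and $|\lambda|=|\mu|=|\nu|=N$. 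These are precisely the required bounds $b_1\le N$, $b_k\le 2N$ for $2\le k\le m$, and $b_{m+j}\le(2m-1)N$ for $1\le j\le n-2$.

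The only step calling for real care is the sign-pattern claim for the linear form $\mathbf{b}^{m,n}(\lambda,\mu,\nu;\id)$ --- that every coefficient of a part of $\lambda$ is $\le 0$ and every coefficient of a part of $\mu$ or $\nu$ is $\ge 0$ --- since that is what lets us discard the $\lambda$-contributions and crudely bound each $b_k$ by a multiple of $N$. This is a routine reading of the explicit $l_s(\lambda;\id)$, $l_t(\lambda;\id)$, $r_s$, $r_t$ formulas once the staircase $\delta$-terms have been absorbed into the (vanishing) constant term. Everything else is elementary: the monotonicity of $\binom{b+c}{b}$ in $b$, and estimates such as $\sum_{i=2}^{m}(i-1)\mu_i\le(m-1)\sum_{i=2}^{m}\mu_i$.
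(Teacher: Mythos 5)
Your proposal is correct and follows essentially the same route as the paper's own proof: identify that the constant terms of each coordinate of $\mathbf{b}^{m,n}(\lambda,\mu,\nu;\id)$ cancel, drop the (non-positive) $\lambda$-contribution, and read off the crude bounds $b_1\le N$, $b_k\le 2N$ for $2\le k\le m$, $b_{m+j}\le(2m-1)N$ for $1\le j\le n-2$ from $r_s,r_t$, before substituting into Theorem~\ref{theo:atomic-bounds}. Where you add value is in making two steps explicit that the paper only implies: the monotonicity of $b\mapsto\binom{b+c}{b}$ that justifies the substitution, and the reduction to the case $\mathbf{b}\succeq\mathbf{0}$ (when some coordinate is negative, $\tilde{g}^{m,n}_{\lambda,\mu,\nu}=0$ and the inequality is vacuous). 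One small caveat: your aside that $c_1,c_2,c_3,f_1,f_2$ are non-negative is not literally true for $n=2$ and $m\ge 3$ (where $f_1(i)=-1$); this is a latent edge-case issue in Theorem~\ref{theo:atomic-bounds} itself rather than in your derivation from it, and it does not affect the validity of your argument under the implicit assumption $m\le n$ used throughout the paper's examples.
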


\begin{proof}
    Recall that for each component of $\mathbf{b}^{m,n}(\lambda, \mu, \nu; Id) = (r_s(\mu, \nu) + \alpha - l_s(\lambda; Id), r_t(\mu, \nu) + \beta - l_t(\lambda; Id)$ the constant terms cancel.  Therefore each $b_i$ of Theorem \ref{theo:atomic-bounds} is bounded above by linear combination in the parts of $\mu, \nu$ appearing. Explicitly, we find that $b_1 \leq N$, $b_i \leq 2N$ for $2 \leq i \leq m$ and $b_{m+j} \leq (2m-1)N$ for $1 \leq j \leq n-2$.
\end{proof}

As before, combining the previous result with Proposition \ref{prop:atomic-bounds-kc}, we obtain the following bound for the Kronecker coefficients. 

\begin{cor}  \label{cor:kc-bounds-general}
Let $m,n$ be positive integers, and $\lambda, \mu, \nu$ be partitions of $N$ of lengths at most $mn,m,n$ respectively. Then
\[ g_{\lambda, \mu, \nu} \leq  \frac{(mn)!}{2}R_2\] \label{eq:kc-easybound} where $R_2$ is the expression on the right-hand side of Inequality~\eqref{eq:atomic-easybound}.
\end{cor}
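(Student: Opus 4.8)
The plan is to simply chain the two inequalities already established. First I would invoke Proposition~\ref{prop:atomic-bounds-kc}, which applies verbatim to any partitions $\lambda,\mu,\nu$ with $\ell(\mu)\le m$, $\ell(\nu)\le n$, $\ell(\lambda)\le mn$, and hence in particular to partitions of $N$ with these length bounds. This yields
\[
g_{\lambda,\mu,\nu} \le \frac{(mn)!}{2}\,\tilde{g}^{m,n}_{\lambda,\mu,\nu}.
\]
Next I would invoke Corollary~\ref{cor:atomic-kc-bounds}, whose hypotheses are exactly those of the present statement (partitions of $N$ with lengths at most $mn,m,n$), to conclude
\[
\tilde{g}^{m,n}_{\lambda,\mu,\nu} \le R_2,
\]
where $R_2$ is the right-hand side of Inequality~\eqref{eq:atomic-easybound}. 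Combining the two displays gives $g_{\lambda,\mu,\nu}\le \frac{(mn)!}{2}R_2$, which is the claim.

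The only thing worth checking is that the hypotheses of the two cited results are mutually compatible and both satisfied here: Proposition~\ref{prop:atomic-bounds-kc} requires only the length bounds (no common size), while Corollary~\ref{cor:atomic-kc-bounds} additionally requires $|\lambda|=|\mu|=|\nu|=N$; since the corollary being proved assumes the latter, both apply simultaneously. There is no genuine obstacle in this proof — it is a direct composition — so the ``hard part'' is purely bookkeeping: making sure the quantities $c_1,c_2,c_3,f_1,f_2$ in $R_2$ are the ones imported from Theorem~\ref{theo:atomic-bounds} via Corollary~\ref{cor:atomic-kc-bounds}, and that no extra positivity or chamber condition is needed because the binomial-coefficient bound of Proposition~\ref{prop:binomial-bound} holds for all non-negative integer input vectors. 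Hence the proof can be stated in a single sentence, exactly as the statement of the corollary already anticipates (``Theorem~\ref{theo:atomic-bounds} in combination with \ldots'').
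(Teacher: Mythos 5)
Your proof is correct and is exactly the argument the paper intends: the text preceding the corollary says ``combining the previous result [Corollary~\ref{cor:atomic-kc-bounds}] with Proposition~\ref{prop:atomic-bounds-kc}, we obtain the following bound,'' which is precisely your composition of the two inequalities. Your check that the hypotheses of both cited results are satisfied simultaneously is the only substantive point, and you handle it correctly.
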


The bound given in line~\eqref{eq:kc-easybound} is $O(N^d)$, where $d$ is the difference between the number of columns and rows of $A^{m,n}$ - that is:
\begin{equation}
    d = {\binom{mn}{2}} - {\binom{n}{2}} - {\binom{m}{2}} - n - m + 2
\end{equation}
whereas the bound given in \cite{PaPa20} is $O(N^{(mn)^2})$. Note that this analysis holds for the case when $\ell(\mu) = m, \ell(\nu) = n, \ell(\lambda) = mn$. For example, the bound given by Pak and Panova is stronger if $\ell(\lambda) = \ell(\mu) = \ell(\nu) = m$ since in this case
their bound is $O(N^{m^3})$, while our bound is $O(N^{{\binom{m^2}{2}} - 2{\binom{m}{2}} - 2m + 2})$. If $\ell(\mu) = m,\ \ell(\nu) = n$ are fixed, we find that the exponent $x$ given by our $O(N^x)$ expression is smaller when
\begin{equation} \label{eq:exponent-compare}
    \ell(\lambda) > \frac{mn}{2} - \biggr(\frac{m^2 + n^2 + m + n - 4}{2mn}\biggr) - \frac{1}{2}
\end{equation}
and larger when the inequality is flipped. Note that for $\ell(\mu), \ell(\nu) \geq 2$ (i.e. $m,n \geq 2$), the expression on the right-hand side of \eqref{eq:exponent-compare} is smaller than $\frac{mn}{2}$.

We give the explicit bound in the $m=n=3$ case for which there is no efficient computational tool.

\begin{cor} \label{cor:atomic-bounds} For all partitions $\lambda, \mu, \nu$ of $N$ with $\ell(\mu), \ell(\nu) \leq 3, \ell(\lambda) \leq 9$.  
\begin{equation}
g_{\lambda, \mu, \nu} \leq \frac{9!}{2} {\binom{b_1 + 15}{15}}{\binom{b_2 + 7}{7}}{\binom{b_3 + 1}{1}}{\binom{b_4 + 3}{3}}
\end{equation}
where 
\begin{align*}
    b_1 &= \nu_2 + \nu_3 -\lambda_{4} - \lambda_{5} - \lambda_{6} - \lambda_{7} - \lambda_{8} - \lambda_{9} \\
    b_2 &= \mu_{2} + \mu_{3} + \nu_{2} + \nu_{3} -\lambda_{2} - \lambda_{3} - \lambda_{4} - \lambda_{5} - 2\lambda_{6} - 2\lambda_{7} - 2\lambda_{8} - 2\lambda_{9} \\
    b_3 &= \mu_3 + \nu_2 + \nu_3 -\lambda_{3} - \lambda_{4} - \lambda_{5} - \lambda_{6} - \lambda_{7} - 2 \, \lambda_{8} - 2 \, \lambda_{9} \\
    b_4 &= \mu_{2} + 2 \, \mu_{3} + 2 \, \nu_{2} + 3 \, \nu_{3} -\lambda_{2} - 2 \, \lambda_{3} - 2 \, \lambda_{4} - 3 \, \lambda_{5} - 3 \, \lambda_{6} - 4 \, \lambda_{7} - 4 \, \lambda_{8} - 5 \, \lambda_{9}.
\end{align*} 
\end{cor}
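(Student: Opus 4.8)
The plan is to recognize Corollary~\ref{cor:atomic-bounds} as the $m=n=3$ instance of Corollary~\ref{cor:kc-bounds}, i.e.\ of Theorem~\ref{theo:atomic-bounds} combined with Proposition~\ref{prop:atomic-bounds-kc}. Since $mn=9$, the hypotheses $\ell(\mu),\ell(\nu)\le 3$ and $\ell(\lambda)\le 9$ match exactly, and the prefactor is $(mn)!/2=9!/2$. So the whole task reduces to evaluating the right-hand side of Inequality~\eqref{eq:atomic-bounds} at $m=n=3$, and no new ideas are required. (In particular, Theorem~\ref{theo:atomic-bounds} only uses the column structure of $A^{3,3}$, which is easy to write down, not its piecewise quasi-polynomial; this is why the bound is available even though $p_{A^{3,3}}$ could not be computed.)

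The first step is to plug $m=n=3$ into the constants of Theorem~\ref{theo:atomic-bounds}: directly $c_1=(m^2-1)(n-1)-1=15$, $c_2=(m-1)(n-1)^2-1=7$, $c_3=\binom{m-1}{2}(n-1)+(m-1)-1=3$, and $f_1(3)=2\binom{n-1}{2}(3-2)-1=1$. The product $\prod_{j=1}^{n-3}$ is empty since $n-3=0$, while $\prod_{i=3}^{m}$ contributes only the factor $i=3$. Hence the bound of Theorem~\ref{theo:atomic-bounds} collapses to $\binom{b_1+15}{b_1}\binom{b_2+7}{b_2}\binom{b_4+3}{b_4}\binom{b_3+1}{b_3}$, and rewriting via $\binom{a}{k}=\binom{a}{a-k}$ turns this into $\binom{b_1+15}{15}\binom{b_2+7}{7}\binom{b_3+1}{1}\binom{b_4+3}{3}$, the form appearing in the statement. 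Multiplying by $9!/2$ and applying Proposition~\ref{prop:atomic-bounds-kc} gives the claimed inequality for $g_{\lambda,\mu,\nu}$.

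The remaining step is to identify the four components of $\mathbf{b}^{3,3}(\lambda,\mu,\nu;\id)$ with the linear forms $b_1,\dots,b_4$ displayed in the corollary. This is the identical computation that produced the $(2,3,6)$ and $(2,4,8)$ formulas earlier in Section~\ref{sec:explicit-computation}: substitute $m=n=3$ into the explicit expressions for $r_s,r_t,\alpha,\beta$ and the $\sigma=\id$ specializations of $l_s,l_t$ given after Theorem~\ref{theo:vpf-to-kron}, then take $r_s+\alpha-l_s$ for the first $m=3$ coordinates (those of $s_0,s_1,s_2$) and $r_t+\beta-l_t$ for the single remaining coordinate (that of $t_1$). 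As noted after Theorem~\ref{theo:vpf-to-kron}, the constant term of each coordinate of $\mathbf{b}^{m,n}(\lambda,\mu,\nu;\id)$ vanishes, so what survives in each coordinate is exactly a linear form in the parts of $\lambda,\mu,\nu$; carrying out the substitution yields the four forms $b_1,b_2,b_3,b_4$ as written.

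I do not anticipate a genuine obstacle, since everything is a specialization of already-established results. The only care needed is bookkeeping: tracking the (numerous) constant terms in $r_s,r_t,\alpha,\beta,l_s,l_t$ and confirming they cancel coordinatewise, and respecting the coordinate ordering — the $m+n-2=4$ rows of $A^{3,3}$ correspond to $s_0,s_1,s_2,t_1$ in that order, so in the statement $b_3$ is the last $s$-coordinate, carrying the $\binom{\cdot}{1}$ factor coming from $f_1(3)$, and $b_4$ is the $t$-coordinate, carrying the $\binom{\cdot}{3}$ factor coming from $c_3$. Matching these against the labels $b_1,\dots,b_4$ consistently is the one place where a slip could occur.
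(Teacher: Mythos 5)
Your proposal is correct and is exactly the route the paper takes: Corollary~\ref{cor:atomic-bounds} is stated as the $m=n=3$ specialization of Corollary~\ref{cor:kc-bounds}, and your evaluation of the constants ($c_1=15$, $c_2=7$, $c_3=3$, $f_1(3)=1$, empty $j$-product) and of the four coordinates of $\mathbf{b}^{3,3}(\lambda,\mu,\nu;\id)$ all check out, including the assignment of the $\binom{\cdot}{1}$ factor to the $s_2$-coordinate $b_3$ and the $\binom{\cdot}{3}$ factor to the $t_1$-coordinate $b_4$.
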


\begin{eg}
Table~\ref{tab:bound} presents  bounds on $g_{\lambda, \mu, \nu}$ where
$\lambda = (15, 15, 15, 10, 10, 10, 10, 10, 5)$, $\mu = (35, 35, 30)$, $\nu = (40, 30, 30)$.

\begin{table}[h]
\begin{tabular}{lcc}\toprule
Source & Bound \\ \midrule
 Corollary \ref{cor:kc-bounds} &  $1.42 \cdot 10^{16}$\\
 Corollary \ref{cor:kc-bounds-general} & $5.38 \cdot 10^{45}$ \\
  Pak and Panova, Inequality \ref{eq:pp-bound1} \cite{PaPa14} & $2.84 \cdot 10^{27}$\\
  Pak and Panova, Inequality \ref{eq:pp-bound2} \cite{PaPa20} & $1.13 \cdot 10^{54}$\\ \bottomrule 
\end{tabular} 

\bigskip

  \caption{Upper bound comparison for $g_{15^3\,10^5\,5,35^2\,30,40\,30^2}$}
  \label{tab:bound}
  \end{table}
  
The bound given by Inequality~\ref{eq:pp-bound1} by Pak and Panova is better on some examples. From our experience our bound is the better choice when~$\lambda$ is close to rectangular due to the large coefficients on small parts of~$\lambda$.
\end{eg}

\section{Conclusion and open problems} \label{sec:conclusion}
The partition function approach to Kronecker functions is elementary, yet provides a useful structure to calculate values and upper bounds. The notion of the atomic Kronecker Coefficient is very useful to determine vanishing conditions, bounds, and also to generate stable triples. We summarize a few open problems.

\tocless\subsection*{\bf 1.} Proving that the atomic Kronecker coefficient $\tilde{g}^{m,n}_{\lambda, \mu, \nu}$ is an upper bound for the Kronecker coefficient $g_{\lambda, \mu, \nu}$ for general partitions. Proving this would allow us to remove the factorial growth in the lengths $m,n$ (the $\frac{(mn)!}{2}$ term). 
    
\tocless\subsection*{\bf 2.} Determine which alternant terms make a non-zero contribution to the Kronecker coefficient, and find cancelling terms (i.e. determine the minimal number of $\sigma \in \mathfrak{S}_{mn}$ needed to sum over in Eq.~\eqref{eq:vpf-to-kron}). This would speed up the computation of the Kronecker coefficients since it reduces the number of vector partition function evaluations necessary.
    
\tocless\subsection*{\bf 3.} For $\lambda, \mu, \nu$ with $\ell(\mu), \ell(\nu) \leq 2, \ell(\lambda) \leq 4$ one can compute $g_{\lambda, \mu, \nu}$ via Theorem~\ref{theo:vpf-to-kron} with $m=n=2$. One can also apply this computation with $m=2,\ n=3$ (or any choice of $m,n$), however in this case one gets many more alternant terms. Is there a way to exploit this in order to simplify the expression in the $m=2,\ n=3$ case (and in general)? 
    
\tocless\subsection*{\bf 4.} Explore the structure of the poset in the discussion preceding Lemma \ref{lem:domination} for general $m,n$.

\tocless\subsection*{\bf 5.} Compute $p_{A^{3,3}}$ as a piecewise quasi-polynomial.

\section{Acknowledgments}
We are extremely grateful to have been given access to the computational resources provided by WestGrid (www.westgrid.ca) and Compute Canada Calcul Canada (www.computecanada.ca) We also thank John Hebron for important technical assistance locally.
Both authors benefited from the financial support of the the National Science and Engineering Research Council (NSERC) of Canada, via NSERC Discovery Grant R611453. We also wish to explicitly acknowledge the important support of Mercedes Rosas and Sheila Sundaram throughout this project. Their feedback, commentary and insights have been essential.   

\bibliographystyle{abbrv} 
\bibliography{Main}
\appendix

\section{Proof of Theorem \ref{theo:atomic-bounds}} \label{sec:appendix2}

We begin by restating Theorem \ref{theo:atomic-bounds}.

\begin{theo} 
Let $m,n$ be positive integers, and $\lambda, \mu, \nu$ be partitions with $\ell(\lambda) \leq mn, \ell(\mu) \leq m,\ \ell(\nu) \leq n$. Then:
\begin{equation} \label{eq:atomic-bounds-appendix}
\tilde{g}^{m,n}_{\lambda, \mu, \nu} \leq {\binom{b_1 + c_1}{b_1}}{\binom{b_2 + c_2}{b_2}}{\binom{b_{m+n-2} + c_3}{b_{m+n-2}}}\prod_{i=3}^{m} {\binom{b_i + f_1(i) }{b_i}}\prod_{j=1}^{n-3} {\binom{b_{m+j} + f_2(j)}{b_{m+j}}},
\end{equation}
where 
$\BF{b} = (b_1, \dots, b_{m+n-2}) = \mathbf{b}^{m,n}(\lambda, \mu, \nu; Id)$ and 
\begin{align*}
    c_1 &= (m^2-1)(n-1) - 1\\
    c_2 &= (m-1)(n-1)^2 - 1 \\ 
    c_3 &= {\binom{m-1}{2}}(n-1) + (m-1) - 1 \\
    f_1(i) &= 2{\binom{n-1}{2}}(i-2) - 1 \\
    f_2(j) &= (n- j - 1)(m-1) - 1
\end{align*}
\end{theo}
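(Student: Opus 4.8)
The plan is to combine Lemma~\ref{lem:col-replace} with Proposition~\ref{prop:binomial-bound}. Recall that $\tilde g^{m,n}_{\lambda,\mu,\nu}=p_{A^{m,n}}(\BF b)$ with $\BF b=\BF b^{m,n}(\lambda,\mu,\nu;\id)$. Leaving untouched the $m+n-2$ columns of $A^{m,n}$ that are already standard basis vectors, I would replace each of the remaining columns by one of the standard basis vectors $\BF e_i$ lying in its support; by Lemma~\ref{lem:col-replace} the matrix $E$ so obtained satisfies $p_E(\BF b)\ge p_{A^{m,n}}(\BF b)$. Since every column of $E$ is a standard basis vector, Proposition~\ref{prop:binomial-bound} evaluates
\[
p_E(\BF b)=\prod_{i=1}^{m+n-2}\binom{b_i+k_i-1}{k_i-1}=\prod_{i=1}^{m+n-2}\binom{b_i+k_i-1}{b_i},
\]
where $k_i$ is the number of columns of $E$ equal to $\BF e_i$. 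Thus the theorem reduces to choosing the replacements so that $k_1=c_1+1$, $k_2=c_2+1$, $k_{m+n-2}=c_3+1$, $k_i=f_1(i)+1$ for $3\le i\le m$, and $k_{m+j}=f_2(j)+1$ for $1\le j\le n-3$; substituting these multiplicities into the product above is exactly Inequality~\eqref{eq:atomic-bounds-appendix}. Different admissible replacement rules yield incomparable bounds for different $\lambda,\mu,\nu$, so the rule leading to $c_1,c_2,c_3,f_1,f_2$ is only a conveniently stated one, not an optimal one.

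To carry this out I first need the columns of $A^{m,n}$ together with their supports. These can be read off from the factorization $\tfrac{a_{\delta_m}[X]a_{\delta_n}[Y]}{a_{\delta_{mn}}[XY]}=\tfrac{1}{\mathcal{A}\mathcal{B}\mathcal{C}\mathcal{D}\mathcal{E}\mathcal{F}}$ recalled in Section~\ref{sec:VPF}: after the substitution \eqref{eq:s-change}--\eqref{eq:t-change}, the pure monomial prefactors of $\mathcal{C}$ and $\mathcal{F}$ are absorbed into $\BF s^{\alpha}\BF t^{\beta}$ and contribute no column, while every remaining linear factor of $\mathcal{A},\dots,\mathcal{F}$ --- up to sign a difference $v-v'$ of two of the monomials $1,x_1,\dots,x_{m-1},y_1,\dots,y_{n-1},x_1y_1,\dots,x_{m-1}y_{n-1}$, with $v'$ weakly preceding $v$ in this list --- contributes the column $\deg(v)-\deg(v')$, where $\deg$ denotes the exponent vector in the variables $s_0,\dots,s_{m-1},t_1,\dots,t_{n-2}$. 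By Proposition~\ref{rem:var-change} each such column is a nonnegative vector, and its coordinate at a variable $u$ is nonzero precisely when $\deg_u(v')<\deg_u(v)$; this pins down the support of every column. One thereby obtains an explicit list of the $\binom{mn}{2}-\binom{n}{2}-\binom{m}{2}$ columns of $A^{m,n}$, grouped by their origin in $\mathcal{A},\dots,\mathcal{F}$ (including repetitions, e.g. the factor $(1-y_j)^{m-1}$ of $\mathcal{C}$ produces $m-1$ equal columns), each tagged with its set of admissible $\BF e_i$-replacements.

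Finally I would fix one admissible replacement rule --- assigning to each column in the list a definite $\BF e_i$ drawn from its support --- and then tally, family by family, how many columns are sent to each $\BF e_i$; reorganizing these counts gives the closed forms $c_1,c_2,c_3,f_1,f_2$ and finishes the proof. The main obstacle is precisely this bookkeeping: there are $\binom{mn}{2}-\binom{n}{2}-\binom{m}{2}$ columns distributed across six families carrying assorted multiplicities, and one must verify both that each assigned index genuinely lies in the corresponding support (which follows from the monotonicity in Proposition~\ref{rem:var-change}) and that the resulting $k_i$ collapse to the stated polynomials in $m$ and $n$. A useful consistency check throughout is that $\sum_i k_i$ must recover the total column count $\binom{mn}{2}-\binom{n}{2}-\binom{m}{2}$; correspondingly, the absence of any factorial in $c_1,c_2,c_3,f_1,f_2$ reflects that only the \emph{numbers} of linear factors in $\mathcal{A},\dots,\mathcal{F}$, and not any interaction among them, enter the tally.
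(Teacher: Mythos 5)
Your proposal follows essentially the same route as the paper's proof: reduce via Lemma~\ref{lem:col-replace} and Proposition~\ref{prop:binomial-bound} to choosing, for each column of $A^{m,n}$ (read off family-by-family from the binomial factors of $\mathcal{A},\dots,\mathcal{F}$ after the substitution \eqref{eq:s-change}--\eqref{eq:t-change}, with supports controlled by Proposition~\ref{rem:var-change}), a standard basis vector in its support, and then tally the multiplicities $k_i$. The bookkeeping you defer is precisely the content of the paper's appendix (its Table of column types and replacement choices), and your consistency check $\sum_i k_i=\binom{mn}{2}-\binom{n}{2}-\binom{m}{2}$ does hold for the stated $c_1,c_2,c_3,f_1,f_2$.
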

\begin{proof}
For each of column $\mathbf{c}$ of $A^{m,n}$ we analyze which of the indices $1 \leq h \leq m+n-2$ are non-zero, in order to understand which of the standard basis vectors $e_1, \dots, e_{m+n-2}$ we may use to replace $\mathbf{c}$ with. The columns of $A^{m,n}$ arise from the binomials of $\mathcal{A}, \dots, \mathcal{F}$ from lines \eqref{eq:A} -- \eqref{eq:F} after the substitution to $s,t$ variables. Explicitly, if column $\mathbf{c}$ of $A^{m,n}$ corresponds to the binomial $1 - \prod\limits_{u=0}^{m-1}\prod\limits_{v=1}^{n-2} s_u^{p_u}t_v^{r_v}$ for non-negative integers $p_0, \dots, p_{m-1}, r_1, \dots, r_{n-2}$, then 
\[
c_k =
\begin{cases}
p_{k-1} \text{ if } 1 \leq k \leq m \\
r_{k-m} \text{ if } m+1 \leq k \leq m+n-2.
\end{cases}.
\]
If $c_k$ is non-zero, then by Proposition \ref{lem:col-replace}, we can replace column $\mathbf{c}$ by $e_k$. 
In the following discussion, we analyze each column of $A^{m,n}$ to find which standard basis vectors may be used to replace it in order to obtain bounds. The information is collected in Table \ref{tab:non-zero-indices-Amn}. Note that we only three cases -- when replacement can be done via the standard basis vector $e_1$, when it can be done via $e_2$, or when any of the standard basis vectors $e_{m+b}$ for $b=1,\dots,n-2$ can be used. We use this approach in order to keep the number of cases relatively low.

\begin{table}[h]
    \centering
    \begin{tabular}{ccccccc}
    \toprule
    column origin & binomial & \# columns & $e_1$ & $e_2$ & $e_{m+b}$  \\
     \midrule
    $\mathcal{A}$ &  $1-\frac{y_j}{x_i}$ & $(m-1)(n-1)$ & \checkmark & & \\
    
    $\mathcal{B}$ & $1 - x_iy_j$ & $(m-1)(n-1)$ & \checkmark & \checkmark & \checkmark    \\
    
    $\mathcal{C}$ & $1 - x_i$ & $(m-1)(n-1)$ & & \checkmark & \checkmark  \\
    
    $\mathcal{C}$ & $1 - y_j$ & $(m-1)(n-1)$ & \checkmark & \checkmark & \checkmark  \\
    
    $\mathcal{D}$ & $1 - \frac{x_iy_j}{x_k}$ & $2{\binom{m-1}{2}}(n-1)$ & \checkmark & \checkmark & \checkmark  \\
    
    $\mathcal{D}$ & $1 - \frac{x_iy_j}{y_k}$ & $2{\binom{n-1}{2}}(m-1)$ & & \checkmark  &   \\
    
    $\mathcal{E}$ & $1 - \frac{x_ky_l}{x_iy_j}$ & $2{\binom{n-1}{2}}{\binom{m-1}{2}}$ & & & \\
    
    $\mathcal{F}$ & $1 - \frac{x_k}{x_i}$ & $(n-1){\binom{m-1}{2}}$ & &  & \checkmark  \\
    
    $\mathcal{F}$ & $1 - \frac{y_l}{y_j}$ & $(m-1){\binom{n-1}{2}}$ & &  &   \\
     
    \bottomrule
    \end{tabular}
    \bigskip
    \caption{Columns of $A^{m,n}$ and the standard basis vectors which can be used to replace them.}
    \label{tab:non-zero-indices-Amn}
\end{table}

For each column type (defined by the form of the binomial it arose from), we provide the number of such columns and illustrate which of the standard basis vectors in the set $\{e_1, e_2\} \cup \{e_{m+b}\}_{1 \leq b \leq n-2}$ can be used to replace such a column. 

\subsubsection*{Columns arising from $\mathcal{A}$} The binomials of $\mathcal{A}$ are of the form 
$1 - \frac{y_j}{x_i}$ for  $1 \leq i \leq m-1,\ 1 \leq j \leq n-1$.
There are $(m-1)(n-1)$ columns of this type. After the variable substitution, the binomial in $s,t$ arising from a given $i,j$ is 
\[
1 - s_0(s_{i+1}\dots s_{m-1})(t_1 \dots t_{n-2})^{m-1-i}(t_1\dots t_{j-1}).
\]
Here $s_0$ appears for each choice of $i,j$. Therefore we can replace any of the columns arising from $\mathcal{A}$ by $e_1$. We thus choose $e_1$ to replace all such columns.

\subsubsection*{Columns arising from $\mathcal{B}$} The binomials of $\mathcal{B}$ are of the form $1 - x_iy_j$ for $1 \leq i \leq m-1,\ 1 \leq j \leq n-1$.
 There are $(m-1)(n-1)$ columns of this type. After the variable substitution, the binomial in $s,t$ arising from a given $i,j$ is 
\[
1 - (s_1 \dots s_i)(t_1\dots t_{n-2})^i (s_0 \dots s_{m-1})(t_1 \dots t_{n-2})^{m-1} (t_1 \dots t_{j-1}).
\]
Each $s_a$ and $t_b$, ($0 \leq a \leq m-1,\ 1 \leq b \leq n-2$) appear for all choices of $i,j$. Therefore we can replace any of the columns arising from $\mathcal{B}$ by any of the standard basis vectors $e_1, \dots, e_{m+n-2}$. We choose $e_1$ to replace all such columns. 

\subsubsection*{Columns arising from $\mathcal{C}$}
There are two types of binomials arising from $\mathcal{C}$.

The first type of binomial of $\mathcal{C}$ is of the form $1 - x_i$ for  $1 \leq i \leq m-1$.
    In this case each binomial is raised to the power $n-1$, so there are $(m-1)(n-1)$ columns of this type. After the variable substitution, the binomial in $s, t$ arising from a given $i$ is
    \[
    1 - (s_1 \dots s_i)(t_1\dots t_{n-2})^i.
    \]
    Here $s_1$ and $t_b$ ($1 \leq b \leq n-2$) appear for all choices of $i$. Therefore we can replace any of the columns of the first type arising from $\mathcal{C}$ by $e_2$ or $e_{m+b}$ for $1 \leq b \leq n-2$. We choose $e_2$ to replace all such columns. 
    
The second type of binomial of $\mathcal{C}$ is of the form $1 - y_j$ for $1 \leq j \leq n-1$. In this case each binomial is raised to the power $m-1$, so there are $(m-1)(n-1)$ columns of this type. After the variable substitution, the binomial in $s, t$ arising from a given $j$ is 
    \[
    1 - (s_0 \dots s_{m-1})(t_1 \dots t_{n-2})^{m-1}(t_1 \dots t_{j-1})
    \]
    Here each $s_a$, $t_b$ ($0 \leq a \leq m-1,\ 1 \leq b \leq n-2$) appear for all choices of $j$. Therefore we can replace any of the columns of the second type arising from $\mathcal{C}$ by any of the standard basis vectors $e_1, \dots, e_{m+n-2}$. We choose $e_1$ to replace all such columns. 
    
\subsubsection*{Columns arising from $\mathcal{D}$}
There are two types of binomials arising from $\mathcal{D}$. 
The first type of binomial of $\mathcal{D}$ is of the form $1 - \frac{x_iy_j}{x_k}$ for $1 \leq i \leq m-1, 1 \leq k \leq m-1, 1 \leq j \leq n-1$ with $k \neq i$. There are $2{\binom{m-1}{2}}(n-1)$ columns of this type. After the variable substitution, the binomial in $s, t$ arising from a given choice of $i,j,k$ is
    \[
    1 - (s_0 \dots s_i)(s_{k+1} \dots s_{m-1})(t_1 \dots t_{n-2})^{m + i - k - 1}(t_1 \dots t_{j-1})
    \]
    Here $s_0, s_1$ and $t_b$ ($1 \leq b \leq n-2$) each appear for all choices of $i,j,k$. Therefore, we can replace any of the columns of the first type arising from $\mathcal{D}$ by $e_1, e_2$ or $e_{m+b}$ for $1 \leq b \leq n-2$. We choose $e_1$ to replace all such columns. 

The second type of binomial of $\mathcal{D}$ is of the form $1 - \frac{x_iy_j}{y_k}$ for $1 \leq i \leq m-1, 1 \leq j \leq n-1, 1 \leq k \leq n-1$ with $k \neq j$. There are $2{\binom{n-1}{2}}(m-1)$ columns of this type. After the variable substitution, the binomial in $s, t$ arising from a given choice of $i,j, k$ is
    \[
    1 - (s_1 \dots s_i)(t_1 \dots t_{k-1})^{i-1} (t_k \dots t_{n-2})^{i}(t_1 \dots t_{j-1})
    \]
    Here $s_1$ appears for each choice of $i,j,k$. Therefore we can replace any of the columns of the first type arising from $\mathcal{D}$ by $e_2$. We thus choose $e_2$ to replace all such columns. 
\subsubsection*{Columns arising from $\mathcal{E}$}   The binomials of $\mathcal{E}$ are of the form $1 - \frac{x_ky_l}{x_iy_j}$ for $1 \leq i < k \leq m-1, 1 \leq j \leq n-1, 1 \leq l \leq n-1$ with $j \neq l$. There are $2{\binom{n-1}{2}}{\binom{m-1}{2}}$ columns of this type. After the variable substitution, the binomial in $s, t$ arising from a given choice of $i, j, k, l$ is
\[
1 - (s_{i+1} \dots s_k)(t_1 \dots t_{j-1})^{k-i - 1}(t_j \dots t_{n-2})^{k-i}(t_1 \dots t_{l-1})
\]
Here none of the variables appear for each choice of $i,j,k,l$. However, for any $q=2,\dots, m-1$, we may consider the set of rows for which $s_q$ appears and no $s_r$ with $r > q$ appears. In each of these cases using $e_{q+1}$ to replace the row is a sensible choice for any choice of $q$. There are 
\[
2{\binom{n-1}{2}}(q-1)
\]
such columns for each $2 \leq q \leq m-1$ (note that $q \neq 1$ since $i+1 \geq 2$).

\subsubsection*{Columns arising from $\mathcal{F}$}
There are two types of binomials arising from $\mathcal{F}$. 
 The first type of binomial of $\mathcal{F}$ is of the form $1 - \frac{x_k}{x_i}$ for $1 \leq i < k \leq m-1$. Each of these binomials is raised to the power $n-1$, so there are ${\binom{m-1}{2}}(n-1)$ columns of this type. After the variable substitution, the binomial in $s, t$ arising from a given choice of $i,k$ is
    \[
    1 - (s_{i+1} \dots s_k)(t_1 \dots t_{n-2})^{k-i}
    \]
    Here $t_b$ ($1 \leq b \leq n-2$) appears for all choices of $i,k$. Therefore we can replace any of the columns arising from $\mathcal{A}$ by $e_{m+b}$ for $1 \leq b \leq n-2$. We choose $e_{m+n-2}$ to replace all such columns. 

The second type of binomial of $\mathcal{F}$ is of the form $1 - \frac{y_l}{y_j}$ for $1 \leq j < l \leq n-1$. Each of these binomials is raised to the power $m-1$, so there are ${\binom{n-1}{2}}(m-1)$  columns of this type. After the variable substitution, the binomial in $s, t$ arising from a given choice of $j,l$ is 
    \[
    1 - (t_j \dots t_{l-1})
    \]
    Here none of the variables appear for all choices of $j,l$. 
    However, for any $q=1,\dots,n-2$ we may consider the set of rows for which $t_q$ appears but no $t_r$ appears with $r < q$. In each of these cases using $e_{m+q}$ to replace the row is a sensible choice for any choice of $q$. There are 
\[
(n-1-q)(m-1)
\]
such columns for each $1 \leq q \leq n-2$ (note that $q < l$, so $q \neq n-1$).

Replacing each column via the process described above produces the bound given in Theorem~\ref{theo:atomic-bounds}, where $c_1 + 1$ is the number of columns replaced by $e_1$, $c_2 + 1$ is the number of comlumns replaced by $e_2$, $c_3 + 1$ is the number of columns replaced by $e_{m+n-2}$, $f_1(i) + 1$ is the number of columns replaced by $e_{i}$ (for $3 \leq i \leq m$) and $f_2(j) + 1$ is the number of columns replaced by $e_{m+j}$ (for $1 \leq j \leq n-3$). Remark that the added ones appear since $c_1, c_2, c_3, f_1, f_2$ have incorporated the subtraction by one necessary for the negative binomial coefficient. 
\end{proof}

\section{Proof of Proposition~\ref{prop:rewrite-eqns}} \label{sec:proof-rewrite-system}
We begin by restating Proposition~\ref{prop:rewrite-eqns}.

\begin{prop} 
Let $\lambda, \mu, \nu$ be partitions of the same positive integer $N$ with $\ell(\mu) \leq m, \ell(\nu) \leq n, \ell(\lambda) \leq mn$.
 Then $\BF{b}^{m,n}(\lambda, \mu, \nu; Id) = \BF{0}$ if and only if $(\lambda, \mu, \nu)$ satisfy the following equations:
    \begin{align}
            \mu_u &= \lambda_u +  \sum\limits_{i=m+(u-1)(n-1)+1}^{m + u(n-1)} \lambda_i &\text{ for } u=1, \dots, m  \label{eq:stab-mu-appendix} \\
            \nu_1 &= \sum\limits_{i=1}^{m} \lambda_i  & \label{eq:stab-nu1-appendix} \\
            \nu_v &= \sum\limits_{i=0}^{m-1} \lambda_{m+(n-1)i + v-1} & \text{ for }  v=2,\dots,n \label{eq:stab-nu2-appendix}.
        \end{align}
\end{prop}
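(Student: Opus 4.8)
The plan is a direct computation from the explicit linear forms for the coordinates of $\BF{b}^{m,n}(\lambda,\mu,\nu;\id)$ recorded after Theorem~\ref{theo:vpf-to-kron}. As observed there, the constant terms of $r_s(\mu,\nu)+\alpha-l_s(\lambda;\id)$ and of $r_t(\mu,\nu)+\beta-l_t(\lambda;\id)$ cancel, so each of the $m+n-2$ coordinates of $\BF{b}^{m,n}(\lambda,\mu,\nu;\id)$ is an explicit integer linear form in the parts of $\lambda,\mu,\nu$. First I would record these forms: the ``$s_0$''-coordinate is $\sum_{k=2}^{n}\nu_k-\sum_{i=m+1}^{mn}\lambda_i$; the ``$s_u$''-coordinate for $1\le u\le m-1$ is $\sum_{k=u+1}^{m}\mu_k+\sum_{k=2}^{n}\nu_k-\sum_{i=u+1}^{m+u(n-1)}\lambda_i-2\sum_{i=m+u(n-1)+1}^{mn}\lambda_i$; and the ``$t_v$''-coordinate for $1\le v\le n-2$ is the non-constant part of $r_t(\mu,\nu)_v-l_t(\lambda;\id)_v$. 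Since $|\lambda|=|\mu|=|\nu|=N$ is part of the hypothesis, the statement to prove is that, modulo the two relations $|\lambda|=|\mu|$ and $|\lambda|=|\nu|$, the system $\BF{b}^{m,n}(\lambda,\mu,\nu;\id)=\BF{0}$ is equivalent to \eqref{eq:stab-mu-appendix}--\eqref{eq:stab-nu2-appendix}. Each manipulation below is an invertible integer change of the defining equations, so it suffices to carry out one direction and note that the steps reverse.

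For the $\mu$-equations, I would start from the ``$s_0$''-coordinate, which gives $\sum_{k=2}^{n}\nu_k=\sum_{i=m+1}^{mn}\lambda_i$, substitute this into the ``$s_u$''-coordinate, and simplify using the decomposition $\sum_{i=u+1}^{m+u(n-1)}\lambda_i=\sum_{i=u+1}^{m}\lambda_i+\sum_{i=m+1}^{m+u(n-1)}\lambda_i$ to obtain $\sum_{k=u+1}^{m}\mu_k=\sum_{i=u+1}^{m}\lambda_i+\sum_{i=m+u(n-1)+1}^{mn}\lambda_i$ for $1\le u\le m-1$. Subtracting the equation for $u+1$ from that for $u$ yields $\mu_{u+1}=\lambda_{u+1}+\sum_{i=m+u(n-1)+1}^{m+(u+1)(n-1)}\lambda_i$, which is \eqref{eq:stab-mu-appendix} for indices $2,\dots,m$; the $u=m-1$ equation already is the index-$m$ instance (using $m+m(n-1)=mn$), and the $u=1$ equation together with $|\mu|=|\lambda|$ gives the index-$1$ instance $\mu_1=\lambda_1+\sum_{i=m+1}^{m+n-1}\lambda_i$.

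For the $\nu$-equations, \eqref{eq:stab-nu1-appendix} follows at once from the ``$s_0$''-identity and $|\nu|=|\lambda|$. Next I would compute, for $2\le v\le n-2$, the difference of the ``$t_v$'' and ``$t_{v-1}$'' coordinate equations: on the $\nu$-side the weight pattern $(m-1,\dots,m-1,m,\dots,m)$ telescopes to $-\nu_{v+1}$, while on the $\lambda$-side the analogous telescoping collapses to $-\lambda_{m+v}-\sum_{i=1}^{m-1}\lambda_{m+i(n-1)+v}=-\sum_{i=0}^{m-1}\lambda_{m+i(n-1)+v}$, giving \eqref{eq:stab-nu2-appendix} for indices $3,\dots,n-1$. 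The remaining instances $v=2$ and $v=n$ of \eqref{eq:stab-nu2-appendix} follow from the already-derived relations together with $|\nu|=|\lambda|$ (for $v=2$ one also uses the ``$t_1$''-coordinate equation after substituting the $\mu$-relations), the key combinatorial fact throughout being that $\{m+1,\dots,mn\}$ is tiled by the blocks $\{m+i(n-1)+1,\dots,m+(i+1)(n-1)\}$ for $i=0,\dots,m-1$. Reversing the chain of substitutions, differences, and uses of the size conditions establishes the converse.

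The main obstacle is purely bookkeeping: correctly splitting truncated sums such as $\sum_{i=u+1}^{m+u(n-1)}\lambda_i$, tracking exactly which parts $\lambda_i$ carry which coefficient in $l_t(\lambda;\id)_v$, and invoking the block tiling of $\{m+1,\dots,mn\}$ without off-by-one slips. There is no conceptual difficulty once the cancellation of the constant terms (already asserted in the body) is taken as given; what remains is a careful but routine linear-algebra computation, and I expect the telescoping in the $t$-coordinates to be the step most prone to sign errors.
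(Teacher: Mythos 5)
Your proposal is correct, but it takes a genuinely different route from the paper. You derive Eqs.~\eqref{eq:stab-mu-appendix}--\eqref{eq:stab-nu2-appendix} \emph{from} the system $\BF{b}^{m,n}(\lambda,\mu,\nu;\id)=\BF{0}$ together with $|\lambda|=|\mu|=|\nu|$ by explicit elimination (substituting the $s_0$-equation into the $s_u$-equations, telescoping consecutive $s_u$- and $t_v$-equations, and using the block tiling of $\{m+1,\dots,mn\}$); I checked the key identities, e.g.\ $\sum_{k=u+1}^m\mu_k=\sum_{i=u+1}^m\lambda_i+\sum_{i=m+u(n-1)+1}^{mn}\lambda_i$ and the difference $-\nu_{v+1}+\sum_{i=0}^{m-1}\lambda_{m+i(n-1)+v}=0$, and they are right. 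The paper instead verifies the easy direction (the equations imply $\BF{b}=\BF{0}$) and settles the converse by a dimension count: it forms the matrix $Q$ of the system $\{\BF{b}=\BF{0},\,|\mu|=|\lambda|,\,|\nu|=|\lambda|\}$, shows the submatrix $Q'$ of $\mu,\nu$-columns already has rank $m+n$, and concludes that both linear systems cut out subspaces of codimension $m+n$, one contained in the other, hence equal. Your approach buys an explicit, self-contained derivation that exhibits where each equation comes from; the paper's buys brevity and sidesteps the one point your plan leaves implicit, namely the claim that ``the steps reverse.'' Differencing loses information, so to make your converse airtight you should add one sentence observing that summing \eqref{eq:stab-mu-appendix} over $u=u_0+1,\dots,m$ and \eqref{eq:stab-nu2-appendix} over $v=2,\dots,n$ (together with the size conditions) recovers the original $s_{u_0}$- and $s_0$-equations, and similarly partial sums recover the $t_v$-equations --- equivalently, you could replace that sentence by the paper's codimension comparison. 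With that addition the argument is complete.
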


\begin{proof}
One can check that for $\lambda, \mu, \nu$ respecting Eqs.~\eqref{eq:stab-mu} -- \eqref{eq:stab-nu2} we do indeed get $\BF{b}^{m,n}(\lambda, \mu, \nu; Id) = \BF{0}$. 

The set of solutions $(\lambda, \mu, \nu)$ to $\BF{b}^{m,n}(\lambda, \mu, \nu; Id) = \BF{0}$ over $\mathbb{R}^{m + n + mn}$ with $|\lambda| = |\mu| = |\nu|$ is $\ker(Q)$ for a matrix $Q$ whose rows are given by the equations $|\mu| = |\lambda|$, $|\nu| = |\lambda|$ and the coordinate-wise equalities $\BF{b}^{m,n}(\lambda, \mu, \nu; Id)_i = 0$ for $1 \leq i \leq m + n -2$. Below we give the matrix $Q'$ obtained from $Q$ by removing all columns indexed by $\lambda$. The row corresponding to coordinate $i$ of $\BF{b}^{m,n}(\lambda, \mu, \nu; Id)$ is indexed by the $s$ or $t$ variable from which the equation arises. 

\begin{equation}
Q' = 
    \begin{blockarray}{ccccccccccccc}
    & \mu_1 & \mu_2 & \mu_3 & \dots & \mu_{m-1} & \mu_m & \nu_1 & \nu_2 & \nu_3 & \dots & \nu_{n-1} & \nu_n \\
    \begin{block}{c(cccccccccccc)}
        |\mu| = |\lambda| & 1 & 1 & \dots & \dots & \dots & 1 & 0 & 0 & \dots & \dots & \dots & 0 \\
        |\nu| = |\lambda| & 0 & 0 & \dots & \dots & \dots & 0 & 1 & 1 & \dots & \dots & \dots & 1 \\
        s_0 & 0 & 0 & \dots & \dots & \dots & 0 & 0 & 1 & 1 & \dots & \dots & 1 \\
        s_1 & 0 & 1 & 1 & \dots & \dots & 1 & 0 & 1 & 1 & \dots & \dots & 1 \\
        s_2 & 0 & 0 & 1 & \dots & \dots & 1 & 0 & 1 & 1 & \dots & \dots & 1 \\
        \vdots & \vdots &  &  & & & & \vdots  & & & & & \vdots \\
        s_{m-1} & 0 & 0 & 0 & \dots & 0 & 1 & 0 & 1 & 1 & \dots & \dots & 1 \\
        t_1 & 0 & 1 & 2 & \dots & m-2 & m-1 & 0 & m-1 & m & m & \dots & m \\
        t_2 & 0 & 1 & 2 & \dots & m-2 & m-1 & 0 & m-1 & m-1 & m & \dots & m \\
        \vdots & \vdots &  &  & &  & & \vdots &  & & & & \vdots \\
        t_{n-2} & 0 & 1 & 2 & \dots & m-2 & m-1 & 0 & m-1 & m-1 & m-1 & \dots & m-1 \\
    \end{block}
    \end{blockarray}
\end{equation}

The rank of $Q'$ is $m + n$. Therefore $Q$ also has rank $m+n$, and so $\ker(Q)$ has dimension $mn$ and co-dimension $m+n$. Since the set of $\lambda, \mu, \nu$ respecting Eqs.~\eqref{eq:stab-mu} -- \eqref{eq:stab-nu2} also has co-dimension $m+n$, we see that the two systems of linear equations are equivalent.

\end{proof}

\section{Proof of Theorem \ref{theo:stability}} \label{sec:proof-stable-triples}
    In \cite{Man15}, Manivel gives a description of the stable faces of the Kronecker polyhedron in terms of a particular type of standard tableau. A standard tableau $T$ of shape $m \times n$ is \em additive\em\ if there exist increasing sequences $x_1 < x_2 < \dots < x_m, y_1 < y_2 < \dots < y_n$ with the property that
    \begin{equation}
        T(i,j) < T(l,k) \iff x_i + x_j < x_l + x_k.
    \end{equation}
    For an $m \times n$ additive tableau $T$ and partition $\lambda$ of length at most $mn$, Manivel defines the partitions $a_T(\lambda)$ and $b_T(\lambda)$ as follows:
    \begin{align}
        a_T(\lambda)_i = \sum\limits_{j=1}^{m} \lambda_{T(i,j)} \text{ for } i = 1,\dots,m\\
        b_T(\lambda)_j = \sum\limits_{i=1}^{n} \lambda_{T(i,j)} \text{ for } j = 1,\dots,n.
    \end{align}
    Then $(\lambda, a_T(\lambda), b_T(\lambda))$ is a stable triple \cite[Proposition 7]{Man15} and the set  $\{(\lambda, a_T(\lambda), b_T(\lambda)) : \ell(\lambda) \leq mn \}$ is a face of the Kronecker polyhedron of minimal dimension \cite[Proposition 9]{Man15}. We now restate Theorem \ref{theo:stability}, and then show that $\tau_{m,n}$ can be described by an additive tableau, thus proving that each $\lambda, \mu, \nu$ satisfying Eqs.~\eqref{eq:stab-mu} -- \eqref{eq:stab-nu2} is a stable triple. 
    
    \begin{theo} 
    Each triple $\lambda, \mu, \nu$ satisfying Eqs.~\eqref{eq:stab-mu} -- \eqref{eq:stab-nu2} is a stable triple. Moreover, the cone $\tau_{m,n}$ is a stable face of $PKron_{mn,m,n}$.
\end{theo}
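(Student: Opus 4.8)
The plan is to realize the linear system \eqref{eq:stab-mu}--\eqref{eq:stab-nu2} by an explicit additive standard tableau and then quote Manivel's \cite[Propositions~7 and~9]{Man15} exactly as recalled above. Define the $m\times n$ rectangular tableau $T$ by $T(u,1)=u$ for $1\le u\le m$, and $T(u,v)=m+(u-1)(n-1)+(v-1)$ for $1\le u\le m$ and $2\le v\le n$. First I would check that $T$ is a standard Young tableau: column $1$ carries $1,\dots,m$; row $u$ outside column $1$ carries the block of $n-1$ consecutive integers from $m+(u-1)(n-1)+1$ to $m+u(n-1)$, and these $m$ blocks tile $\{m+1,\dots,mn\}$; each row is increasing since $u\le m<m+1\le T(u,2)$; and each column is increasing since going down one row adds $n-1$ in columns $v\ge 2$ and adds $1$ in column $1$.

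Next I would match $a_T(\lambda)$ and $b_T(\lambda)$ with $\mu$ and $\nu$. The $u$-th row sum of $T$ evaluated at $\lambda$ is $\lambda_u+\sum_{i=m+(u-1)(n-1)+1}^{m+u(n-1)}\lambda_i$, which is the right-hand side of \eqref{eq:stab-mu}; the first column sum is $\sum_{i=1}^{m}\lambda_i$, as in \eqref{eq:stab-nu1}; and the $v$-th column sum for $v\ge 2$ is $\sum_{i=0}^{m-1}\lambda_{m+(n-1)i+v-1}$, as in \eqref{eq:stab-nu2}. So $(\lambda,a_T(\lambda),b_T(\lambda))=(\lambda,\mu,\nu)$ exactly for triples satisfying the system; and since $\lambda$ is weakly decreasing while, in each of these sums, passing from one part to the next replaces the set of summed indices by a pointwise-larger one, $a_T(\lambda)$ and $b_T(\lambda)$ are automatically partitions when $\lambda$ is. Hence the cone $\tau_{m,n}$ is exactly Manivel's set $\{(\lambda,a_T(\lambda),b_T(\lambda)):\ell(\lambda)\le mn\}$.

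The one genuinely new point is additivity of $T$. The cells of $T$ receive their entries in the order: all of column $1$ from top to bottom, then the remaining cells in row-reading order, that is, $(1,1),(2,1),\dots,(m,1),(1,2),\dots,(1,n),(2,2),\dots,(2,n),\dots,(m,n)$. I would use the witnessing sequences $x_i=ni$ for $1\le i\le m$, together with $y_1=0$ and $y_v=(m-1)n+(v-1)$ for $2\le v\le n$; these are strictly increasing. One then checks that $x_i+y_j$ is strictly increasing along the above cell order: column $1$ comes first because $x_m+y_1=mn<mn+1=x_1+y_2$; the orders within column $1$ and within each row are immediate; and consecutive rows do not interleave because $x_u+y_n=un+(m-1)n+n-1<(u+1)n+(m-1)n+1=x_{u+1}+y_2$ for $1\le u\le m-1$. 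Since the entry of each cell is its rank in this order, $T(i,j)<T(l,k)$ if and only if $x_i+y_j<x_l+y_k$, so $T$ is additive.

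With $T$ additive, \cite[Proposition~7]{Man15} shows $(\lambda,\mu,\nu)=(\lambda,a_T(\lambda),b_T(\lambda))$ is a stable triple, which is the first assertion, and \cite[Proposition~9]{Man15} shows $\tau_{m,n}=\{(\lambda,a_T(\lambda),b_T(\lambda)):\ell(\lambda)\le mn\}$ is a face of $PKron_{mn,m,n}$; since $\tau_{m,n}\cap W_{mn,m,n}$ consists of weakly stable triples by Corollary~\ref{cor:atomic=1}, this face is a stable face. I do not anticipate a real obstacle: the only step requiring any thought is reading off $T$ from the index blocks appearing in \eqref{eq:stab-mu}--\eqref{eq:stab-nu2} (which in fact pins $T$ down uniquely) and exhibiting one valid pair of witnessing sequences; the remaining verifications are routine comparisons of sums.
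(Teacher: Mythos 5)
Your proof is correct and follows essentially the same route as the paper's: you identify the same additive tableau $T$ encoding Eqs.~\eqref{eq:stab-mu}--\eqref{eq:stab-nu2} and invoke Manivel's Propositions 7 and 9, differing only in the (equally valid) choice of witnessing sequences $x_i = ni$, $y_v = (m-1)n+(v-1)$ versus the paper's $x_i=(i-1)(n-1)$, $y_j=(m-1)(n-1)+j-1$. Your explicit check that $a_T(\lambda)$ and $b_T(\lambda)$ are automatically partitions is a small point the paper leaves implicit, and is a welcome addition.
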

    
    \begin{proof}
    Consider the tableau
    \begin{equation}
        T = \begin{bmatrix}
            1 & m+1 & m+2 & \dots & m+n-1 \\
            2 & m+(n-1) + 1 & m+(n-1)+2 & \dots & m+2(n-1)              \\
            3 & m+2(n-1) + 1 & m+2(n-1)+2 & \dots & m+3(n-1)              \\
            \vdots & \vdots & \vdots & & \vdots      \\
            m & m + (m-1)(n-1) + 1 & m + (m-1)(n-1) + 2 & \dots & mn
        \end{bmatrix}
    \end{equation}
    defined by
    \begin{align}
        T_{i,1} &= i & \text{ for } i=1,\dots,m \\
    T_{i,j} &= m + i(n-1) + j & \text{ for } i=2,\dots,m, j = 1,\dots,n.
    \end{align}
    
    It is straightforward to check that for any $\lambda$ with $\ell(\lambda) \leq mn$, $a_T(\lambda)$ and $b_T(\lambda)$ are the partitions $\mu$ and $\nu$ defined by Eqs.~\eqref{eq:stab-mu} -- \eqref{eq:stab-nu2}. We now show that $T$ is an additive tableau.
    
    Consider the sequences
    \[
    x_i = (i-1)(n-1) \text{ for } i=1,\dots, m
    \] and 
    \[ 
    y_1 = 0, y_j = (m-1)(n-1) + j-1 \text{ for } j=2,\dots,m.
    \]

If $T_{i,j} < T_{k,l}$, we have three main cases to consider. 
\begin{enumerate}
    \item If $l = 1$, then $j=1$, and so $i < k$. In this case $x_i + y_1 < x_k + y_1$ since $x_i < x_k$.
    \item If $l \geq 2$ and $j=1$, then 
    \begin{align*}
    x_i + y_1 &\leq (m-1)(n-1) \\ 
    &< (m-1)(n-1) + 1 \\
    &\leq x_k + y_2 \\\
    &\leq x_k + y_l
    \end{align*}
    \item If $l, j \geq 2$, then $T_{i,j} < T_{k,l}$ if and only if $i < k$ or ($i=k$ and $j < l$). \\
    If $i < k$, then 
    \begin{align*}
    x_i + y_j &= (i-1)(n-1) + (m-1)(n-1) + j - 1 \\
    &< i(n-1) + (m-1)(n-1) + 1 \\
    &\leq x_{i+1} + y_2 \\
    &\leq x_k + y_l
    \end{align*}
so $x_i + y_j < x_k + y_l$. \\
If $i=k$ and $j < l$, then $x_i + y_j < x_k + y_l$ since $y_j < y_l$.
\end{enumerate}
Therefore $T$ is an additive tableau and $\tau_{m,n}$ is the face associated to $T$. Thus, we conclude that each triple of partitions $\lambda, \mu, \nu$ satisfying Eqs.~\eqref{eq:stab-mu} -- \eqref{eq:stab-nu2} is stable, and that $\tau_{m,n}$ is a stable face of $PKron_{mn,m,n}$.
 \end{proof}
 
\begin{rem}
    In \cite{Man15}, Manivel introduces the \em $(T, \lambda)$-reduced Kronecker coefficient \em\ $g_{T,\lambda}(\alpha, \beta, \gamma)$ to be the stable value of the sequence $(g_{\alpha + k\lambda, \beta + k\mu, \gamma + k\nu})_{k \geq 0}$. He also shows that the $(T, \lambda)$-reduced Kronecker coefficient counts integral points in a polytope $P_{T, \lambda}$ (and thus may be written as a vector partition function). It may be interesting to compare $(T, \lambda)$-reduced Kronecker coefficients (for the $T$ given above) and atomic Kronecker coefficients for a given $m,n$ (although the choice of $\lambda$ is not a priori obvious).
\end{rem}

\end{document}